\documentclass[11pt,draft, reqno]{amsart}

\usepackage[margin=18mm]{geometry}
\usepackage{amssymb, amsmath, amsthm, amscd, color}

\usepackage[T1]{fontenc}
\usepackage{eucal,mathrsfs,dsfont}
\usepackage{color}


\renewcommand{\leq}{\leqslant}
\renewcommand{\geq}{\geqslant}
\renewcommand{\le}{\leqslant}
\renewcommand{\ge}{\geqslant}

\def\eps{\varepsilon}

\definecolor{mno}{rgb}{0.5,0.1,0.5}

\newcommand{\R}{\mathds R}

\newcommand{\e}{\varepsilon}
\newcommand{\T}{\mathds T}

\newcommand{\Ee}{\mathds E}

\newcommand{\I}{\mathds 1}
\newcommand{\w}{\omega}
\newcommand{\N}{\mathds{N}}
\newcommand{\Z}{\mathds Z}
\newcommand{\sL}{\mathcal{L}}
\newcommand{\sE}{\mathcal{E}}
\newcommand{\sF}{\mathcal{F}}

\newcommand{\D}{\mathscr{D}}

\newcommand{\LL}{\mathcal{L}}

\newtheorem{theorem}{Theorem}[section]
\newtheorem{lemma}[theorem]{Lemma}

\theoremstyle{definition}

\newtheorem{remark}[theorem]{Remark}

\numberwithin{equation}{section}

\def\wt{\widetilde}

\begin{document}
\allowdisplaybreaks

\title[Quantitative periodic homogenization for
symmetric non-local stable-like operators] {Quantitative periodic homogenization
for
symmetric non-local stable-like operators}

\author{Xin Chen,\quad Zhen-Qing Chen,\quad Takashi Kumagai  \quad \hbox{and}\quad Jian Wang}

\date{}

\maketitle

\begin{abstract}
Homogenization
for non-local operators in periodic environments has been studied intensively.
So far, these works
are mainly devoted to the  qualitative   results, that is, to determine
explicitly the operators in the limit.
To the best of
authors'
 knowledge, there is no result concerning the
 convergence rates of the
homogenization for stable-like
operators in periodic environments.  In this paper, we establish a
quantitative  homogenization result
for symmetric $\alpha$-stable-like
operators on $\R^d$ with periodic coefficients. In particular,
we show that the
convergence rate for the solutions of associated Dirichlet problems
on a
bounded domain $D$ is of order
$$
\varepsilon^{(2-\alpha)/2}\I_{\{\alpha\in (1,2)\}}+\varepsilon^{\alpha/2}\I_{\{\alpha\in (0,1)\}}+\varepsilon^{1/2}|\log \e|^2\I_{\{\alpha=1\}},
$$
while, when the solution to
the equation in the limit is in
$C^2_c(D)$, the convergence rate becomes
 $$
\varepsilon^{2-\alpha}\I_{\{\alpha\in (1,2)\}}+\varepsilon^{\alpha}\I_{\{\alpha\in (0,1)\}}+\varepsilon |\log \e|^2\I_{\{\alpha=1\}}.
$$
This indicates that
the boundary decay behaviors  of the solution  to the equation in the limit
affects the convergence rate in the homogenization.

\medskip

\noindent\textbf{Keywords:} homogenization; symmetric $\alpha$-stable-like operator; periodic coefficient; Dirichlet problem

\medskip

\noindent \textbf{MSC 2010:} 60G51; 60G52; 60J25; 60J75.
\end{abstract}
\allowdisplaybreaks

\section{Introduction}\label{section1}
This paper is devoted to quantitative results for homogenization of symmetric (non-local) stable-like
 operators in deterministic periodic media.
In line
with most of literature (e.g., \cite{CCKW1,HDS0, HDS, San, Sch}),
we restrict ourselves
to the Euclidean space setting.
Consider the following
 symmetric  stable-like operator $\sL$ on
$\R^d$:
\begin{align*}
\sL f(x)&:={\rm p.v.}\int_{\R^d}(f(y)-f(x))\frac{K(x,y)}{|x-y|^{d+\alpha}}\,dy \\
&:=\lim_{\eps \to 0} \int_{\R^d} {\bf1}_{\{|y-x|>\eps\}}(f(y)-f(x))\frac{K(x,y)}{|x-y|^{d+\alpha}}\,dy
\end{align*}
for $f\in C_c^2(\R^d)$.
Here
$\alpha\in (0,2)$, and
$K\in C^2(\R^d \times \R^d)$ is
  multivariate 1-periodic on $\R^d\times \R^d$ (that is, it can be viewed as a bounded
 $C^2$-smooth  function  defined on $\T^d\times \T^d$, where $\T^d:=\R^d/\Z^d$ is the $d$-dimensional torus)
so that
 \begin{equation}\label{e:1.1}
 K(x,y)=K(y,x) \quad \hbox{and} \quad
\Lambda^{-1}\le K(x,y)\le \Lambda
 \quad \hbox{for   }  x, y  \in \R^d
 \end{equation}
 with some
constant $\Lambda\ge 1$.
  The symmetric non-local operator $\sL$ generates a symmetric bilinear form for $u, v\in C^2_c(\R^d)$:
 \begin{equation} \label{e:1.2}
 \sE (u, v)=\frac12 \iint_{\R^d\times \R^d} (u(x)-u(y))(v(x)-v(y)) \frac{K(x,y)}{|x-y|^{d+\alpha}}\,dx \,dy.
 \end{equation}
 Denote by $\sF$ the completion of $C_c^2(\R^d)$ under the norm
 \begin{equation}\label{e:1.3}
 \| u\|_{\alpha/2, 2}:=  \left(\iint_{\R^d\times \R^d}
\frac{(u(x)-u(y))^2}{|x-y|^{d+\alpha}}
 \,dx dy +\int_{\R^d}u(x)^2 dx\right)^{1/2} .
  \end{equation}
 It is easy to check that
 \begin{equation}\label{e:1.4}
 \sF=W^{\alpha/2, 2} (\R^d) = \left\{ u\in L^2(\R^d; dx): \iint_{\R^d\times \R^d}
\frac{(u(x)-u(y))^2}{|x-y|^{d+\alpha}}
 \,dx dy <\infty \right\}
\end{equation}
and $(\sE, \sF)$ is a regular Dirichlet form on $L^2(\R^d; dx)$.
 Denote by $(\tilde  \sL, {\rm Dom}(\tilde  \sL))$
the $L^2$-generator
of $(\sE, \sF)$. Then $C^2_c(\R^d)\subset {\rm Dom}(\tilde \sL)$ and
$   \tilde  \sL f =\sL f $
  for $f\in C_c^2(\R^d)$.
It is known from \cite{CK03, CK08} that there is a symmetric  conservative irreducible
Feller process $X:=\{X_t\}_{t\ge 0}$ on $\R^d$ associated with the regular Dirichlet form $(\sE, \sF)$ on $L^2(\R^d; dx)$, which has the strong Feller property and can start from any $x\in \R^d$.
In the literature, $X$ is called a symmetric $\alpha$-stable-like process.
Since the coefficient $K(x,y)$ is multivariate 1-periodic on $\R^d\times \R^d$, one can regard $X$ as
a symmetric
$\T^d$-valued process.
It is exponentially ergodic and has the normalized Lebesgue measure on $\T^d$ as its  unique invariant probability measure.

For any $\e>0$, set
$$
X^\e:=\{X^\e_t\}_{t\ge 0}:= \{ \e X_{\e^{-\alpha}t} \}_{t\ge 0}.
$$
Clearly, $X^\e$ is a symmetric
$\R^d$-valued Feller  process having
the strong Feller property with associated Dirichlet form $(\sE^\e, W^{\alpha/2, 2}(\R^d))$ on $L^2(\R^d; dx)$,
where
\begin{equation} \label{e:1.5}
 \sE^\e (u, v)=\frac12 \iint_{\R^d\times \R^d} (u(x)-u(y))(v(x)-v(y)) \frac{K({x}/{\e},{y}/{\e})}{|x-y|^{d+\alpha}}\,dx dy.
 \end{equation}
 Denote by $(\sL_\e, {\rm Dom}  (\sL_\e))$ the $L^2$-generator of $(\sE^\e, W^{\alpha/2, 2}(\R^d))$.
Then $C_c^2(\R^d) \subset {\rm Dom}  (\sL_\e)$  and
 \begin{equation}\label{e:1.6}
\sL_\e f(x):={\rm p.v.}\int_{\R^d}(f(y)-f(x))\frac{K({x}/{\e},{y}/{\e})}{|x-y|^{d+\alpha}}\,dy,\quad f\in C_c^2(\R^d).
\end{equation}
Define
\begin{equation} \label{e:1.7a}
\bar K:= \iint_{\T^d\times \T^d}K(x,y)\,dx\,dy.
\end{equation}
Let $\bar X:=\{\bar X_t\}_{t\ge0}$ be a rotationally symmetric $\alpha$-stable L\'evy process on $\R^d$ with L\'evy measure
$\bar K |z|^{-(d+\alpha)} dz$.
Denote the infinitesimal generator of $\bar X$
 by $(\bar \sL , {\rm Dom}  (\bar \sL ))$.
Note that  $C_c^2(\R^d) \subset {\rm Dom}  (\bar \sL )$  and
 \begin{equation}\label{e:1.7}
\bar \sL f(x):={\rm p.v.}\int_{\R^d} (f(y)-f(x) )\frac{\bar K}{|y-x|^{d+\alpha}}\,dy  \quad \hbox{for } f\in C_c^2(\R^d).
\end{equation}
One can deduce from \cite[Theorem 4.1]{BKK}
that $X^\e$ converges weakly to $\bar X$  as $\eps \to 0$.
Alternatively, this weak convergence can also be established from
 the uniform heat kernel estimates in \cite[Theorem 1.1]{CK03} for symmetric $\alpha$-stable-like processes $\{X^\eps: \eps \in (0, 1]\}$
 and the $L^2$-convergence of the resolvents  of $X^\eps$ in \cite[Theorem 1.1]{KPZ} as $\eps \to 0$.
We  thus call $\bar \sL$ the homogenized operator of $\sL_\e$.
  The reader may refer to \cite{CCKW2,CCKW1} and the references therein for more details on homogenization of symmetric stable-like processes in stationary ergodic media and periodic homogenization problem of non-symmetric L\'evy-type processes.

Throughout the paper,
unless otherwise specified,  $D$ is a bounded
$C^{1,1}$
domain  in $\R^d$.
   For any $x\in D$, let $\delta_D(x):={\rm dist}(x,\partial D)$ be the distance from
$x$ to the boundary $\partial D$. For any $\theta\in \R$, $k\in \N$ and $0<\gamma\notin \N$, define
\begin{align*}
&[f]^{(\theta)}_{k;D}:=\sup_{x\in D}\left( \delta_D(x)^{k+\theta}|\nabla^k f(x)|\right),  \quad
[f]^{(\theta)}_{\gamma;D}:=\sup_{x,y\in D}\left(\min\left(\delta_D(x),\delta_D(y)\right)^{\gamma+\theta}\frac{|\nabla^{[\gamma]}f(x)-\nabla^{[\gamma]}f(y)|}{|x-y|^{\gamma-[\gamma]}}\right),\\
& C^\gamma_{(\theta)}(D):=\left\{f\in
C(\R^d):
\|f\|_{\gamma;D}^{(\theta)}:=[f]^{(\theta)}_{0;D}+[f]^{(\theta)}_{\gamma;D}<\infty \hbox{ and } f(x)=0 \hbox{ for } x\in D^c
\right\}.
\end{align*}
Here $[\gamma]$ denotes the largest integer not exceeding $\gamma$.
Let $\LL_\e$ and $\bar \LL$ be the operator defined by \eqref{e:1.6} and \eqref{e:1.7}, respectively.
Given  $h\in C_{(\alpha/2)}^{\beta}(D)\cap C(\bar D)$ with
 $\alpha+\beta >4$ so that neither $\beta$ nor $\alpha+\beta$ is an integer,
consider the following Dirichlet problem
\begin{equation}\label{e1-3}
\begin{cases}
\LL_\e u_\e(x)=h(x),&\ \ x\in D,\\
u_\e(x)=0,&\ x\in D^c.
\end{cases}
\end{equation}
Similarly,
consider the following
Dirichlet problem for the
homogenized
 operator $\bar\sL $:
\begin{equation}\label{e1-4}
\begin{cases}
\bar \LL \bar u(x)=h(x),&\ \ x\in D,\\
\bar u(x)=0,&\ x\in D^c.
\end{cases}
\end{equation}
 According to
 \cite[Proposition 1.4 and Theorem 1.5]{RS},
  the equation \eqref{e1-4} has the unique solution $\bar u\in C_{(-\alpha/2)}^{\alpha+\beta}(D)\cap C(\bar D)$.
On the other hand, according to Lemma \ref{l2-1} below, since $D$ is bounded and $h\in C(\bar D)$, the equation \eqref{e1-3} has the unique bounded solution $u_\e\in {\rm Dom}  (\LL_\e^D)\cap L^1(D;dx)$, where ${\rm Dom}  (\LL_\e^D)$ is the domain of the operator $\LL_\e$ with  zero Dirichlet condition on $D^c$.
Moreover, when the domain $D$ is more regular, higher order regularity on $u_\e$ will be obtained.
The reader is
referred to \cite[Theorem 3.1]{ABC} and \cite[Theorem 1.3]{ZZ} for more details on
regularity estimates for solutions to the Dirichlet problem of stable-like operators.
 The following is the main result of the paper, which establishes
an
explicit convergence rate of the solution $u_\e$ of \eqref{e1-3} in $L^1(D;dx)$ as $\e \to 0$.

\begin{theorem}\label{t2-1}
Suppose that $D$ is a bounded $C^{1,1}$
domain  in $\R^d$.
Let $u_\e$ and $\bar u$ be the unique
 solutions of \eqref{e1-3} and \eqref{e1-4} respectively for some
$h\in C_{(\alpha/2)}^{\beta}(D)$
with $\alpha+\beta > 4$, where neither $\beta$ nor $\alpha+\beta$ is an integer.
Then there exists a constant
$C_1>0$ so that for all $\e\in (0,1)$,
\begin{equation}\label{t2-1-1}
\|u_\e-\bar u\|_{L^1(D;dx)}\le
C_1\begin{cases} \e^{\alpha/2},&\quad \alpha\in (0,1),\\
\e^{1/2}(1+|\log \e|^2),&\quad \alpha=1,\\
\e^{(2-\alpha)/2},&\quad \alpha\in (1,2).
\end{cases}
\end{equation}
\end{theorem}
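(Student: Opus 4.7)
The plan is to reduce the $L^1$ convergence of $u_\e-\bar u$ to a quantitative bound on the consistency error $(\LL_\e-\bar\LL)\bar u$ in $L^1(D)$. Setting $v_\e := u_\e-\bar u$ and subtracting \eqref{e1-4} from \eqref{e1-3} gives
\[
 \LL_\e v_\e = F_\e := (\LL_\e-\bar\LL)\bar u \quad \text{in } D, \qquad v_\e=0 \quad \text{on } D^c.
\]
Representing $v_\e(x)=-\int_D G_\e^D(x,y)F_\e(y)\,dy$ via the Dirichlet Green function $G_\e^D$ of $\LL_\e$ on $D$, integrating $|v_\e|$ over $D$, and using the symmetry of $G_\e^D$ together with the uniform exit-time bound $\sup_\e\sup_{y\in D}\Ee^y[\tau^\e_D]<\infty$ (a consequence of the uniform heat kernel estimates of \cite{CK03}), one obtains the stability estimate $\|v_\e\|_{L^1(D)}\le C\|F_\e\|_{L^1(D)}$. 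The remainder of the proof is devoted to controlling $\|F_\e\|_{L^1(D)}$.

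Writing
\[
 F_\e(x) = {\rm p.v.}\int_{\R^d}\bigl(\bar u(y)-\bar u(x)\bigr) \frac{K(x/\e,y/\e)-\bar K}{|x-y|^{d+\alpha}}\,dy,
\]
the smallness comes from the fact that $\psi(\xi,\eta):=K(\xi,\eta)-\bar K$ has mean zero on $\T^d\times\T^d$. I would split the integral according to whether $|y-x|\le r$ or $|y-x|>r$ for a scale $r=r(\e)$ to be optimised. On the near part use the regularity $\bar u\in C^{\alpha+\beta}_{(-\alpha/2)}(D)$ and decompose $\bar u(y)-\bar u(x)$ into its $y\mapsto 2x-y$ even and odd components: the even part is $O(|y-x|^2\|D^2\bar u(x)\|)$ and is absolutely integrable against $|x-y|^{-d-\alpha}$, giving a contribution of order $r^{2-\alpha}\|D^2\bar u(x)\|$; the odd part is handled via principal-value cancellation against the non-symmetric factor $\psi(x/\e,y/\e)$. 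On the far part, exploit periodicity of $\psi$ by constructing an auxiliary mean-zero corrector $\Phi$ on $\T^d\times\T^d$ that solves a torus equation of the form $\mathcal{A}\Phi=\psi$, substituting $\psi(x/\e,y/\e)=\mathcal{A}\Phi(x/\e,y/\e)$, and integrating by parts to transfer derivatives onto the smooth factor $(\bar u(y)-\bar u(x))/|x-y|^{d+\alpha}$; each such transfer gains a factor of $\e^\alpha$.

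Because $\bar u\in C^{\alpha+\beta}_{(-\alpha/2)}(D)$ forces $|\nabla^k\bar u(x)|\lesssim \delta_D(x)^{\alpha/2-k}$, the interior expansion deteriorates near $\partial D$, so I split $D$ into the interior $\{\delta_D>\e^\gamma\}$ and the boundary strip $\{\delta_D\le \e^\gamma\}$, the latter of measure $O(\e^\gamma)$ by $\partial D\in C^{1,1}$. The interior is handled by the bulk estimate above and the strip by crude pointwise bounds on $F_\e$ coming from the weighted H\"older norm of $\bar u$. Optimising $r$ and $\gamma$ in $\e$ gives the stated rates: $\e^{(2-\alpha)/2}$ for $\alpha\in(1,2)$ (bulk-limited) and $\e^{\alpha/2}$ for $\alpha\in(0,1)$ (boundary-limited), the two regimes coinciding at $\alpha=1$ where a borderline logarithmic integral forces the $|\log\e|^2$ loss.

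The main obstacle is the middle step: executing the oscillation cancellation of $K(x/\e,y/\e)-\bar K$ against a symmetric singular kernel in two variables via torus correctors, while simultaneously accommodating the boundary blow-up of derivatives of $\bar u$ coming from the weighted H\"older space. The critical exponent $\alpha=1$ is especially delicate because first-order cancellation fails by a logarithm and second-order Taylor does not fit the singularity of $|x-y|^{-d-\alpha}$; handling it requires a careful simultaneous use of principal-value and periodic-averaging cancellations, producing the $|\log\e|^2$ factor.
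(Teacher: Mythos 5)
Your reduction to the stability estimate $\|u_\e-\bar u\|_{L^1(D)}\le C\|F_\e\|_{L^1(D)}$ with $F_\e=(\LL_\e-\bar\LL)\bar u$ is correct (it is essentially Lemma~\ref{l2-1}), but the plan then to prove that $\|F_\e\|_{L^1(D)}\to 0$ cannot work, because $F_\e$ is not small: it is bounded but \emph{oscillatory}. The $y$-average of $\psi(\xi,\eta):=K(\xi,\eta)-\bar K$ for \emph{fixed} $\xi$ equals $\bar K(\xi)-\bar K$, which does not vanish in general; only the double average over $\T^d\times\T^d$ vanishes. Concretely, take $K(\xi,\eta)=1+\cos(2\pi\xi_1)$, so $\bar K=1$. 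Then
\begin{equation*}
F_\e(x)=\cos\bigl(2\pi x_1/\e\bigr)\,{\rm p.v.}\!\int_{\R^d}\!\bigl(\bar u(x+z)-\bar u(x)\bigr)\frac{dz}{|z|^{d+\alpha}}=\cos\bigl(2\pi x_1/\e\bigr)\,h(x),
\end{equation*}
whose $L^1(D)$-norm is bounded away from zero as $\e\to 0$. Thus the direct consistency bound can only yield $\|u_\e-\bar u\|_{L^1}=O(1)$, which is trivial. Your proposed fix --- introducing a two-variable torus corrector $\Phi$ with $\mathcal{A}\Phi=\psi$ and integrating by parts inside the $z$-integral defining $F_\e(x)$ --- does not help, because only the $\eta$-derivatives of $\Phi$ can be transferred via the $z$-integration; the $\xi=x/\e$ oscillation (which is precisely what makes $F_\e$ order one) survives untouched. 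That oscillation can only be exploited \emph{after} integrating in $x$ against the Green function, i.e.\ via a corrector ansatz.

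This is why the paper does not work with $u_\e-\bar u$ directly but with $u_\e-v_\e$, where
\begin{equation*}
v_\e(x)=\bar u_\e(x)+\e\bigl\langle\phi(x/\e),\nabla\bar u_\e(x)\bigr\rangle+\e^\alpha\bar K^{-1}\psi(x/\e)\,\eta_\e(x)\,\bar\LL\bar u_\e(x),
\end{equation*}
with $\bar u_\e=\bar u\,\eta_\e$ a cut-off of $\bar u$, and where $\phi,\psi$ are one-variable cell correctors on $\T^d$ solving $\sL\phi=-F$ and $\sL\psi=-(\bar K(\cdot)-\bar K)$ (Lemma~\ref{l2-4}). The $\psi$-term in $v_\e$ is engineered to exactly cancel the non-vanishing average $\bar K(x/\e)-\bar K$ after applying $\LL_\e$: see equation~\eqref{t2-1-5} in the paper. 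Only after this cancellation is $\LL_\e v_\e-\eta_\e\bar\LL\bar u_\e-(1-\eta_\e)\bar\LL_\e\bar u_\e$ genuinely small in $L^1$, and then your stability estimate (with $v_\e$ in place of $\bar u$) yields the convergence rate. The cut-off $\eta_\e$ is also indispensable: without it, $\bar u$ need not lie in $\mathrm{Dom}(\LL_\e^D)$ and the boundary blow-up of $\nabla^k\bar u$ makes the expansion terms unmanageable near $\partial D$. Your near/far scale split, the boundary strip of width $\e^\gamma$, and the arithmetic yielding the exponents $(2-\alpha)/2$, $\alpha/2$, and $|\log\e|^2$ are all sensible \emph{within} the corrector framework, but as stated the proposal misplaces where the cancellation occurs and therefore does not close.
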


 As seen from the proof of Theorem \ref{t2-1} below, much more efforts
are
devoted to handling
blow up behaviours for $\nabla \bar u$ and $\nabla^2 \bar u$ near the boundary $\partial D$, as well as their
 interactions with the non-local operator $\LL_\e$.  Indeed, if the solution $\bar u$ of
 the   homogenized equation \eqref{e1-4}  is $C^2$-smooth with compact support in $D$
 (which roughly indicates that
we do not
 need to take care of the boundary behaviours of $\bar u$), then the
convergence rates of $u_\e$  to $\bar u$
will be better than those in \eqref{t2-1-1}.

\medskip

Now we consider \eqref{e1-3} and \eqref{e1-4} for $h\in C_b^1(\bar D)$ that does not need to vanish on $\partial D$ but
under the assumption that $\bar u\in C^2_c(D)$.

\begin{theorem}\label{t2-2}
Assume
that $D$ is a bounded Lipschitz domain in $\R^d$.
Suppose that $u_\e$ and $\bar u$ are the unique solutions of \eqref{e1-3} and \eqref{e1-4}, respectively,  for some
$h\in C_b^1(\bar D)$. If
$\bar u\in C_c^2(D)$, then there exists a constant
$C_2>0$ such that for all $\e\in (0,1)$,
\begin{equation}\label{t2-2-1}
\|u_\e-\bar u\|_{L^2(D;dx)}
\le C_2\begin{cases} \e^{\alpha},&\quad \alpha\in (0,1),\\
\e(1+|\log \e|^2),&\quad \alpha=1,\\
\e^{2-\alpha},&\quad \alpha\in (1,2).
\end{cases}
\end{equation}
\end{theorem}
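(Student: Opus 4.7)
\medskip

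\noindent\textit{Proof plan.}
The strategy is to reduce the $L^2$-estimate on $u_\e-\bar u$ to an estimate for the operator defect $(\LL_\e-\bar\LL)\bar u$ via the energy method, and then to quantify this defect using Taylor expansion of $\bar u$ combined with the symmetry $K(x,y)=K(y,x)$. The hypothesis $\bar u\in C_c^2(D)$ is precisely what allows both steps to be carried out without the boundary-blowup estimates needed for Theorem~\ref{t2-1}.

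\medskip

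\noindent\textit{Step 1 (reduction).}
Set $w_\e:=u_\e-\bar u$. Since $\bar u\in C_c^2(D)$ and $u_\e$ vanishes on $D^c$, the function $w_\e$ is supported in $\bar D$ and lies in $\sF=W^{\alpha/2,2}(\R^d)$. Subtracting \eqref{e1-3} from \eqref{e1-4} gives $\LL_\e w_\e=(\bar\LL-\LL_\e)\bar u$ on $D$. Pairing with $w_\e$ and using $w_\e|_{D^c}=0$,
\[
\sE^\e(w_\e,w_\e)=\int_D w_\e(x)\,(\LL_\e-\bar\LL)\bar u(x)\,dx.
\]
Combined with the uniform lower ellipticity in \eqref{e:1.1} and a fractional Poincar\'e inequality for $\sF$-functions supported in the bounded Lipschitz domain $D$, this reduces the theorem to bounding $(\LL_\e-\bar\LL)\bar u$ in $L^2(D)$ (or, if necessary, in a dual norm coming from testing against $w_\e\in\sF$) by the rate in \eqref{t2-2-1}.

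\medskip

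\noindent\textit{Step 2 (defect estimate).}
Writing $y=x+z$,
\[
(\LL_\e-\bar\LL)\bar u(x)=\int_{\R^d}\bigl[\bar u(x+z)-\bar u(x)\bigr]\,\frac{K(x/\e,\,x/\e+z/\e)-\bar K}{|z|^{d+\alpha}}\,dz.
\]
I would (i) split at a scale $r=r(\e)$ adapted to the regime of $\alpha$; (ii) on $\{|z|\le r\}$, Taylor expand $\bar u(x+z)-\bar u(x)=z\cdot\nabla\bar u(x)+\tfrac12 z^T\nabla^2\bar u(x)z+O(|z|^3)$ and symmetrize in $z\mapsto -z$ using $K(x,y)=K(y,x)$, so that the linear term is paired against the $z$-antisymmetric part of the kernel (which vanishes like $\min(|z|/\e,1)$ at $z=0$) and the quadratic term against the $z$-symmetric part; (iii) on $\{|z|>r\}$, change variables $z=\e w$ and exploit that $u\mapsto K(u,u+w)-\bar K$ is, after symmetrization in $w$, periodic and mean-zero on $\T^d$, extracting cancellation via a cell problem on $\T^d$ (writing the mean-zero periodic coefficient as a divergence in $u$ and integrating by parts in $x$). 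The three regimes $\alpha<1$, $\alpha=1$, $\alpha\in(1,2)$ correspond to different optimal choices of $r$; the logarithm at $\alpha=1$ comes from the borderline integrability $\int_\e^1 r^{-\alpha}\,dr\sim|\log\e|$, and the square of the log appears from iterating this estimate once.

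\medskip

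\noindent\textit{Main obstacle.}
The reduction is routine; the core difficulty is the defect estimate. Naive pointwise bounds on $(\LL_\e-\bar\LL)\bar u$ do not decay in $\e$, and in fact for $\alpha>1$ the contribution of the first-order Taylor term at scales $|z|\lesssim 1$ behaves pointwise like $|\nabla\bar u(x)|\,\e^{1-\alpha}$, cancelled only when one uses the symmetry $K(x,y)=K(y,x)$ together with the $\T^d$-mean-zero structure of $K(x/\e,x/\e+\cdot)-\bar K$ simultaneously. Carefully tracking the interaction between the oscillation of the periodic coefficient on scale $\e$, the singularity $|z|^{-(d+\alpha)}$ at $z=0$, and the $C^2$-smoothness of $\bar u$ is what produces the three sharp rates in \eqref{t2-2-1} and is where the corrector-type argument on $\T^d$ is indispensable.
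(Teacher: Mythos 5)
Your Step 1 reduction, as stated, does not close. The defect $(\LL_\e-\bar\LL)\bar u$ is \emph{not} small in $L^2(D)$: the factor $K(x/\e,(x+z)/\e)-\bar K$ oscillates but does not decay, so $\|(\LL_\e-\bar\LL)\bar u\|_{L^2(D)}=O(1)$ as $\e\to 0$. Your parenthetical hedge (``or in a dual norm coming from testing against $w_\e\in\sF$'') is where the entire content lies, and to make it precise you must exhibit a corrector $g_\e\in W^{\alpha/2,2}_0(D)$ with $\LL_\e g_\e\approx(\LL_\e-\bar\LL)\bar u$ and $\sE^\e(g_\e,g_\e)\lesssim\e^{2(2-\alpha)}$ (resp.\ $\e^{2\alpha}$). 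Equivalently — and this is what the paper does — one replaces $\bar u$ by a two-scale approximant $v_\e$ \emph{before} applying the energy method, so that $\LL_\e v_\e - h$ is genuinely small and $u_\e-v_\e$ vanishes on $D^c$, allowing the Poincar\'e inequality \eqref{l2-9-1} to kick in.

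Your Step 2(iii) then has a conceptual error: you propose to ``write the mean-zero periodic coefficient as a divergence in $u$'', which is the right mechanism for \emph{local} elliptic homogenization but not here. For a nonlocal stable-like operator the corrector is obtained instead by solving the nonlocal cell problem $\sL\psi=-(\bar K(\cdot)-\bar K)$ on $\T^d$, and separately $\sL\phi=-F$ (or $\sL\phi_\e=-F_\e$) for the odd/drift part $F$ of the kernel; this is exactly Lemma~\ref{l2-4} and \eqref{t2-1-3}. No local divergence representation exists, and the gain from periodicity is extracted by testing $\LL_\e\psi(\cdot/\e)$ against the test function and using the symmetry of $\LL_\e$ (integration by parts for the Dirichlet form), not by an $x$-integration by parts against a local divergence. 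Moreover, the paper's proof uses \emph{two} correctors — $\phi$ for the first-order Taylor term and $\psi$ for the zeroth-order averaging of $\bar K(\cdot/\e)$ — together with a boundary correction $w_\e$ (a $\LL_\e$-harmonic extension, \eqref{t2-2-6}–\eqref{e:3.22}) that makes $v_\e\in W^{\alpha/2,2}_0(D)$; none of this is visible in your plan. The form of the corrector also changes between $\alpha\in(1,2)$ (compensated drift $F$) and $\alpha\in(0,1]$ (truncated drift $F_\e$, with $\|\phi_\e\|_\infty$ growing in $\e$), and the factor $|\log\e|^2$ at $\alpha=1$ arises from the combination of $\|\phi_\e\|_\infty\lesssim 1+|\log\e|$ and the H\"older modulus in \eqref{t2-1-4} — not from ``iterating the estimate once.'' As written the plan identifies the right family of ideas (energy method, Taylor expansion, symmetrization, cell problem) but defers the essential corrector construction, and the specific mechanism it proposes for extracting cancellation would not work for this operator class.
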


The homogenization problem for differential operators with periodic coefficients has been investigated extensively in the literature.
The reader is referred to the monographs \cite{Ba,Be,JKO,Sh}.
Obtaining convergence rate is an important problem in the homogenization theory.
The first results in this direction for boundary value problems of elliptic operators with periodic coefficients in $L^2$-estimates are due to \cite{Be} and \cite{Ke1,Ke2}.
Since then, there are a lot of developments in this topic,  see
 \cite{Sh} and the references therein for more details.
In particular, we want to emphasize that the following sharp estimate
for elliptic differential operators
(which is comparable with the classical Berry-Esseen bounds for i.i.d.
random variables)
\begin{equation}\label{e1-1}
\|u_\e-\bar u\|_{L^2(D;dx)}\le
C_3\e
\end{equation}
has been established in \cite{Be,GS,KLS,KLS1,KLS2,ShZ,Su1} under different boundary conditions (including the Dirichlet boundary
condition and the Neumann boundary condition) and regularity
conditions. Here $u_\e$ (resp. $\bar u$) in \eqref{e1-1}
is the unique solution to
 \eqref{e1-3} (resp. \eqref{e1-4}) with $\sL_\e$ (resp.\ $\bar \sL$) being a second-order differential elliptic operator with rapidly oscillating and periodic coefficients (resp. constant coefficients). See also \cite{Bi,Bi2, Gr1, Gr2,Su,Zhi1,Zhi2} for related results.
We note that  there is some remarkable progress over the past decade
on quantitative stochastic homogenization of elliptic and parabolic equations
(for differential operators);
 see for instance \cite{AKM1, AKM2, GNO, GO}.

In recent years, the homogenization for non-local operators in periodic environments has also been studied intensively, see
\cite{CCKW1,HDS0,HDS,KPZ,PZ,San,Sch,SU,T}. In all these papers only
the qualitative homogenization results  for non-local operators   are   investigated;
 that is, these papers determine the limit operators and establish the convergence of the processes or the solutions of the non-local partial differential equations  to those of  the limit operators.
The only existing literature we know
on quantitative  homogenization of non-local operators is
\cite{PSSZ}, where
the $L^2$-convergence
rate is obtained by a spectral method for 1-resolvents of a class of non-local operators of convolution type having $L^2$ integrable jumping kernels.
We note
that the limit operators in \cite{PSSZ} are
second order  elliptic differential operators,
rather than
non-local operators,  and that the
rate of convergence   for the associated Dirichlet problem is not investigated in \cite{PSSZ}.
 \emph{To the best of our knowledge, there is no result concerning convergence rates of the
homogenization for stable-like
operators in periodic environments}.  In this paper we will address this problem for symmetric $\alpha$-stable-like operators.
It is interesting to observe the difference between our results \eqref{t2-1-1} and \eqref{t2-2-1} for non-local operators
and \eqref{e1-1} for differential operators
as well as to the non-local operator investigated in \cite{PSSZ} where the jumping kernel has finite second moment.
We also want to mention that the interior convergence rate for symmetric $\alpha$-stable-like operators in
i.i.d. environments,  where the boundary behavior of the solution was not taken into account, has been obtained in the paper \cite{CCKW-new1}.

Below are some additional comments on the results of this paper.

\begin{itemize}
\item [(1)] The interior convergence rate \eqref{t2-2-1} seems to be optimal, since it is
in line with
the Berry-Esseen estimates for i.i.d.\ random variables having stable laws as limit, see \cite{Ban,CNX,X} for example.
We also believe that the
convergence rates in \eqref{t2-1-1} for the associated Dirichlet problem are near optimal for the methods used in this paper,
 due to the blow up behaviors of $\nabla \bar u$ and $\nabla^2 \bar u$ near the boundary $\partial D$.
These blow ups
make the convergence rate slower than  \eqref{t2-2-1}
which can be viewed
as the interior convergence rate, even if the domain $D$ is smooth. This is different
from the case for the Dirichlet problem of differential operators
as limit equations,
where
$\bar u$ are known to decay at the boundary in a linear rate and so $\nabla u$  and $\nabla^2 u$ are all bounded,
see e.g. \cite{GS,KLS,PSSZ,Sh}
for more details.

\item [(2)]
We only
give the $L^1$-convergence rate in \eqref{t2-1-1}.
From \eqref{t2-1-1} and Lemma \ref{l2-1} below,
 we could deduce the $L^2$-convergence rate from the inequality
$$\| u_\eps -\bar u\|_{L^2(D; dx)} \leq C_4 \| h\|_\infty^{1/2}\| u_\eps -\bar u\|_{L^1(D; dx)}^{1/2},
$$
but we do not think
the resulting
 $L^2$-convergence rate is optimal.
As illustrated in
\cite{ABC,RS,ZZ}, it seems difficult to verify the $H^2$-estimate $\int_D |\nabla^2 \bar u(x)|^2 \,dx<\infty$ for the
fractional Laplacian operator, which is a key point to establish the $L^2$-convergence for differential operators
in \cite{GS,KLS}. Because of the lack of such kind
of the $H^2$-estimate,
it does not seem easy to apply
the integration by parts formula to establish
a sharper $L^2$-convergence rate.
 On the other hand, ignoring the boundary behaviors of $\bar u$, we can obtain the $L^2$-interior convergence rate \eqref{t2-2-1}.
Indeed, in the
proof of \eqref{t2-2-1} we can use the integration by parts formula to reduce the requirements into the
required regularity
of $\bar u$ up to $C^2$, while in our proof of \eqref{t2-1-1} higher order regularities  on $\bar u$ are needed.

\item [(3)] Compared with the approach for the quantitative homogenization of differential operators,
there are two essential different ingredients in the proof of Theorem \ref{t2-1}.
One is the quantitative estimates
 \eqref{l2-2-1} and \eqref{l2-3-1} below
   for the average
 in $z$-variable
	in the homogenization problem of $\alpha$-stable-like operators, which represents
the jumping size of the associated stable-like process. However, for differential operators only the averaging properties of
the position variable $x$ are needed.
The other
is to give detailed estimates on the interactions between blow up behaviors of $\nabla \bar u$ and
$\nabla^2 \bar u$ near $\partial D$ and the effect of the non-local operator $\LL_\e$, see Lemmas \ref{l2-5}--\ref{l2-7} below for details. In particular, in the arguments of
two
ingredients
above,
it seems natural to use the $C_{(-\alpha/2)}^{\alpha+\beta}$-norm (with suitable choice of $\beta>0$) instead of the
$H^2$-norm for $\bar u$, and the constant
$C_1$
in \eqref{t2-1-1} only depends on
$\sup_{0\le k\le 4}\sup_{x\in D}\left( \delta_D(x)^{k-\alpha/2}|\nabla^k \bar u(x)|\right)$.
\end{itemize}

\ \

The rest of this paper is arranged as follows.
We present in Section \ref{S:2} some preliminaries that are needed later in this paper.
They include  some regularity results for  solutions to the Poisson equation associated with symmetric stable-like operators, and several averaging estimates for $\sL_\e f$.  Their proofs involve some delicate and lengthy estimates for distance functions and cut-off functions.
 The proofs of Theorems \ref{t2-1} and \ref{t2-2} are given in Section \ref{section3}.

\section{Preliminaries}\label{S:2}

In this section we give some preliminaries that are needed for the proofs of our main theorems.

\subsection{Solution to the Poisson equation}
Since
$K\in C^2(\R^d \times \R^d)$ is multivariate 1-periodic on $\R^d\times \R^d$, we can view
the corresponding symmetric Feller process $X:=\{X_t\}_{t\ge 0}$ on $\R^d$  and its $L^2$-generator
$(\sL, {\rm Dom}(\sL)) $ as a  symmetric Feller  process
$X ^{\T^d}:= \{X_t^{\T^d}\}_{t\ge 0}$ on  the torus $\T^d$ and  a generator $(\LL,
{\rm Dom}  (\sL^{\T^d}))$  on $\T^d$,
respectively.

\begin{lemma}\label{l2-4}
For every $f\in C(\T^d)$ with $\int_{\T^d}f(x)\,dx=0$, there exists
a unique  $\psi_f\in {\rm Dom}  (\sL^{\T^d})\cap C(\T^d)$ such that
\begin{equation}\label{l2-4-1}
\LL \psi_f=-f
\quad \hbox{on } \T^d
\end{equation}
and $\int_{\T^d}\psi_f(x)\,dx=0$.
Moreover, the following properties hold.
\begin{itemize}
\item [(1)] For any $\alpha\in (0,2)$,   there is a constant $C_1>0$ so that
\begin{equation}\label{l2-4-3}
\|\psi_f\|_\infty\le C_1\|f\|_\infty.
\end{equation}

\item [(2)] If $\alpha\in (1,2)$, then $\psi_f\in C^1(\T^d)$ and there is a constant $C_2>0$ so that
\begin{equation}\label{l2-4-2}
\|\psi_f\|_\infty+\|\nabla \psi_f\|_\infty\le C_2 \|f\|_\infty.
\end{equation}

\item [(3)] If $\alpha=1$, then there exist constants $\theta\in (0,1)$ and $C_3>0$ such that
\begin{equation}\label{l2-4-4}
\sup_{x,y\in \T^d}\frac{|\psi_f(x)-\psi_f(y)|}{|x-y|^\theta}\le C_3\|f\|_\infty.
\end{equation}
\end{itemize}
\end{lemma}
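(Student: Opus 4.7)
The plan is to construct $\psi_f$ as the integral of the semigroup on the torus. Let $\{P_t\}_{t\ge 0}$ denote the Markov semigroup of $X^{\T^d}$ and $\pi$ its unique invariant probability (the normalized Lebesgue measure on $\T^d$). The two-sided heat kernel estimates of \cite{CK03} provide a uniform positive lower bound on the transition density on $\T^d\times\T^d$ for $t\ge 1$, giving Doeblin minorization and hence exponential ergodicity: there exist $C_0,\lambda_0>0$ with $\|P_t g\|_\infty\le C_0 e^{-\lambda_0 t}\|g\|_\infty$ for every $g\in C(\T^d)$ with $\int_{\T^d}g\,d\pi=0$. Setting
\begin{equation*}
\psi_f(x):=\int_0^\infty P_t f(x)\,dt,\qquad x\in\T^d,
\end{equation*}
the integral converges uniformly, so $\psi_f\in C(\T^d)$ with $\|\psi_f\|_\infty\le (C_0/\lambda_0)\|f\|_\infty$, which is \eqref{l2-4-3}; invariance of $\pi$ yields $\int_{\T^d}\psi_f\,d\pi=0$.

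Next I would show that $\psi_f\in{\rm Dom}(\sL^{\T^d})$ and solves \eqref{l2-4-1}. The natural route is via the $\mu$-resolvent $R_\mu f:=\int_0^\infty e^{-\mu t}P_t f\,dt$ for $\mu>0$: it lies in ${\rm Dom}(\sL^{\T^d})$ and satisfies $(\mu-\sL)R_\mu f=f$. The exponential decay of $\|P_t f\|_\infty$ on centered $f$ yields a uniform bound $\|R_\mu f\|_\infty\le C\|f\|_\infty$ for $\mu\in(0,1]$; combined with the closedness of $\sL^{\T^d}$ and $R_\mu f\to\psi_f$ uniformly as $\mu\downarrow 0$, this forces $\psi_f\in{\rm Dom}(\sL^{\T^d})$ and $\sL\psi_f=-f$. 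For uniqueness, any $\widetilde\psi\in{\rm Dom}(\sL^{\T^d})\cap C(\T^d)$ with zero $\pi$-mean and $\sL\widetilde\psi=0$ satisfies $P_t\widetilde\psi\equiv\widetilde\psi$, so exponential ergodicity forces $\widetilde\psi\equiv 0$.

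For assertions (2) and (3), I would appeal to spatial regularity estimates for $P_t$. When $\alpha\in(1,2)$, the gradient estimate $\|\nabla P_t f\|_\infty\le C t^{-1/\alpha}\|f\|_\infty$ for $t\in(0,1]$ is known for symmetric stable-like operators with smooth kernels; it follows from the heat-kernel bounds in \cite{CK03} together with a Levi-type parametrix analysis of the transition density. Splitting the integral defining $\psi_f$ at $t=1$ and using the exponential decay on $[1,\infty)$, the convergence of $\int_0^1 t^{-1/\alpha}dt$ (valid since $1/\alpha<1$) yields \eqref{l2-4-2}. For $\alpha=1$, the corresponding H\"older estimate $[P_t f]_{C^\theta(\T^d)}\le C t^{-\theta}\|f\|_\infty$ for some $\theta\in(0,1)$ and $t\in(0,1]$ is a consequence of the De Giorgi--Nash--Moser type regularity theory for symmetric stable-like operators (see \cite{CK03,CK08}); integrating in $t$ with $\theta<1$ gives \eqref{l2-4-4}.

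The main obstacle I foresee is the precise extraction of the gradient and H\"older estimates for $P_t$ with the correct blow-up rate as $t\downarrow 0$, and with constants depending only on $\Lambda$ and the $C^2$-norm of $K$ on $\T^d\times\T^d$. Although such estimates are available for symmetric $\alpha$-stable-like semigroups with H\"older continuous kernels, one has to verify the correct $t$-dependence. A secondary technical point is that $0$ is an eigenvalue of $\sL^{\T^d}$, with constants as the corresponding eigenfunctions; the resolvent passage $\mu\downarrow 0$ must therefore be restricted to the codimension-one subspace of mean-zero functions, on which $P_t$ contracts exponentially.
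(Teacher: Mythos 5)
Your construction of $\psi_f=\int_0^\infty P_tf\,dt$, the use of uniform exponential ergodicity from the two-sided heat kernel bounds, and the appeal to short-time spatial regularity of $P_t$ are exactly the paper's strategy. Where you deviate is in verifying $\psi_f\in{\rm Dom}(\sL^{\T^d})$: you pass through the $\mu$-resolvent $R_\mu f$ and the closedness of the generator, whereas the paper uses that each $P_t$ is Hilbert--Schmidt on $L^2(\T^d)$, hence has discrete spectrum $\{e^{-\lambda_k t}\}$, and reads off $\psi_f=\sum_{k\ge 1}\lambda_k^{-1}\langle f,\varphi_k\rangle\varphi_k$ and $\sL\psi_f=-f$ directly from the spectral decomposition. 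Both routes are correct; the resolvent argument is slightly more robust as it avoids compactness, while the spectral route also yields the $L^2$ version \eqref{e:2.7} used elsewhere in the paper. For (2), the paper is more precise than your parametrix appeal: it rewrites $\sL$ with a drift $b(x)=\tfrac12\int z(K(x,x+z)-K(x,x-z))|z|^{-d-\alpha}\,dz$ in Kato's class and cites \cite[Theorem 1.5(vi)]{CZ1} for $\|\nabla P_tg\|_\infty\le Ct^{-1/\alpha}\|g\|_\infty$; this is the reason the argument only works for $\alpha\in(1,2)$ (the drift is not Kato for $\alpha\le1$). You already flagged this as a technical point to verify, so it is not a gap, just less explicit.

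The one place your sketch is genuinely too optimistic is the $\alpha=1$ case. You posit the estimate $[P_tf]_{C^\theta(\T^d)}\le Ct^{-\theta}\|f\|_\infty$ with $\theta\in(0,1)$, and then integrate $\int_0^1t^{-\theta}\,dt<\infty$. But the available estimate (used in the paper, from \cite[Theorem 4.14]{CK03}) is of the form $|P_tg(x)-P_tg(y)|\le c\|g\|_\infty t^{-\kappa}|x-y|^{\beta}$ with $\kappa>1$: the $t$-power is \emph{not} integrable on $(0,1]$, so the estimate cannot simply be integrated in time. The paper resolves this by splitting the time integral at $|x-y|^{\beta/\kappa}$ and at $1$, bounding the short-time piece by the trivial $\le 2\|f\|_\infty$, the middle piece by the H\"older estimate, and the long-time piece by ergodicity; this trades H\"older exponent in $x$ against the $t$-singularity and produces the exponent $\theta=\beta/\kappa<\beta$. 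Your stated estimate can be derived by interpolating the CK03 bound against the trivial $L^\infty$ bound (which yields $|P_tf(x)-P_tf(y)|\le C\|f\|_\infty t^{-\kappa\theta'/\beta}|x-y|^{\theta'}$ for $\theta'\le\beta$, integrable precisely when $\theta'<\beta/\kappa$), but as written your proposal asserts the cleaner form as ``a consequence of De Giorgi--Nash--Moser theory'' without this intermediate step. That step is precisely the nontrivial content of the $\alpha=1$ case, and should be supplied.
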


\begin{proof}
  Denote by $\{P_t\}_{t\ge 0}$ the transition semigroup of the Feller process $X^{\T^d}$ on $\T^d$.
 By \cite[Theorem 1.1]{CK03}, it has a strictly positive, jointly continuous transition density function
$p(t, x, y)$ with
\begin{equation}\label{eL2.5}
c_0^{-1} \left(t^{-d/\alpha} \wedge \frac{t}{|x-y|^{d+\alpha}} \right) \leq p(t, x, y)
\leq c_0  \left(t^{-d/\alpha} \wedge \frac{t}{|x-y|^{d+\alpha}} \right)
\quad \hbox{on } (0, 1]\times \T^d \times T^d
\end{equation}
for some $c_0>1$.
 Since $\T^d$ is compact,  $X^{\T^d}$ has the normalized Lebesgue measure on $\T^d$ as its equilibrium probability distribution as well as a stationary measure.
 It is standard to verify that $\{P_t\}_{t\ge 0}$ is uniformly exponentially ergodic
(see e.g. \cite{DMT}),  that is,
\begin{equation}\label{l2-4-5}
\|P_t g\|_\infty \le c_1e^{-c_2 t}\|g\|_\infty  \quad \hbox{for any } g\in C(\T^d)\ {\rm with}\ \int_{\T^d}g(x)\,dx=0.
\end{equation}
Since for each $t>0$, the operator $P_t$ is Hilbert-Schmidt, it is compact in $L^2(\T^d; dx)$ and thus has discrete $L^2$-spectrum $\{e^{-\lambda_k t}: k=1, 2, \cdots\}$
with $0=\lambda_0 < \lambda_1\leq \lambda_2 \leq \cdots  $ independent of $t>0$.   Denote by $\varphi_k$ the corresponding eigenfunctions so that $\{\varphi_k: k\geq 0\}$
forms an orthonormal basis of $L^2(\T^d; dx)$ with $\varphi_0 = 1/|\T^d|$.
Then, for each $t>0$,
\begin{equation}\label{e:2.7}
\|P_t g\|_2 \le   e^{-\lambda_1 t}\|g\|_2  \quad \hbox{for every }  g\in L^2 (\T^d; dx)\ {\rm with}\ \int_{\T^d}g(x)\,dx=0.
\end{equation}
Denote by $\langle\cdot,\cdot\rangle_{L^2(\T^d;dx)}$
the
 inner product on $L^2(\T^d;dx)$.
For $f\in
C(\T^d)$ with $
\int_{\T^d}f(x)\,dx=0$, define
 $$
 \psi_f := \int_0^\infty P_t  f dt  = \sum_{k=1}^\infty \lambda_k^{-1} \langle f, \varphi_k\rangle_{L^2(\T^d;dx)} \varphi_k,
 $$
  which converges both in $L^2$ and uniformly on $\T^d$
in view of  \eqref{l2-4-5} and \eqref{e:2.7}.
Clearly, $\int_{\T^d} \psi_f (x) dx=0$.
 Moreover,
it follows from  the spectral theory that $\psi_f \in
{\rm Dom}  (\sL^{\T^d}) $ with
$$
\LL \psi_f = \sum_{k=1}^\infty (-\lambda_k) \langle\psi_f, \varphi_k\rangle_{L^2(\T^d;dx)} \varphi_k = - \sum_{k=1}^\infty
\langle f, \varphi_k\rangle_{L^2(\T^d;dx)} \varphi_k = -f.
$$
So $\psi_f$ is a solution to \eqref{l2-4-1}
with $\int_{\T^d}\psi_f(x)\,dx=0$. Such a solution is unique as any $u\in {\rm Dom}  (\sL^{\T^d})$ having
$ \LL u=0$ is constant on $\T^d$.
It follows from  \eqref{l2-4-5} that
\eqref{l2-4-3} holds for every $\alpha\in (0,2)$.

\medskip

(i) When $\alpha\in (1,2)$, for $f\in C_c^2(\R^d)$,
$$
\sL f(x) = \int_{\R^d} ( f(x+z) -f (x) -z \cdot \nabla f(x)) \frac{K(x, x+z)}{|z|^{d+\alpha}} dz + b(x) \cdot \nabla f(x),
$$
where
$$
b(x)=\frac12  \int_{\R^d} \frac{z (K(x, x+z)-K(x, x-z))}{|z|^{d+\alpha}} \,dz\in C_b(\R^d).
$$
In particular, the drift term $b(x)$ belongs to Kato's class in the sense of \cite[Definition 1.3]{CZ1}; see \cite[Remark 1.4]{CZ1}.
If $\alpha\in (1,2)$, then, by
\cite[Theorem 1.5(vi)]{CZ1},
\begin{align*}
\|\nabla P_t g\|_\infty \le c_3t^{-1/\alpha}\|g\|_\infty,\quad t\in (0,1],\ g\in C(\T^d).
\end{align*}
Thus,
\begin{align*}
\|\nabla \psi_f\|_\infty&\le \int_0^1 \|\nabla P_t f\|_\infty \,dt+\int_1^\infty \|\nabla P_1\left(P_{t-1} f\right)\|_\infty \,dt\\
&\le c_3
\left(\int_0^1 t^{-1/\alpha}\|f\|_\infty \,dt +\int_1^\infty \|P_{t-1}f\|_\infty \,dt\right)\\
&\le c_4
\left(\int_0^1 t^{-1/\alpha}\|f\|_\infty\, dt +\int_1^\infty e^{-c_5 (t-1)}\|f\|_\infty \,dt\right)\le c_6\|f\|_\infty.
\end{align*}
So \eqref{l2-4-2} is proved.

(ii) When $\alpha=1$, by \cite[Theorem 4.14]{CK03}, there exist constants $c_7>0$, $\kappa>1$ and $\beta\in (0,1)$ such that
\begin{equation}\label{l2-4-6}
|P_tg(x)-P_tg(y)|\le c_7\|g\|_\infty t^{-\kappa}|x-y|^{\beta},\quad t\in (0,1],\, x,y\in \T^d \hbox{ and } g\in C(\T^d).
\end{equation}
Write
\begin{align*}
|\psi_f(x)-\psi_f(y)|&\le \left(\int_0^{|x-y|^{{\beta}/{\kappa}}}+
\int_{|x-y|^{{\beta}/{\kappa}}}^1+\int_1^\infty\right)|P_t f(x)-P_t f(y)|\,dt\\
&=:I_1(x,y)+I_2(x,y)+I_3(x,y).
\end{align*}
It holds that
\begin{align*}
I_1(x,y)&\le 2\|f\|_\infty |x-y|^{{\beta}/{\kappa}}.
\end{align*}
Applying \eqref{l2-4-6}, we obtain
\begin{align*}
I_2(x,y)&\le c_8\|f\|_\infty |x-y|^{{\beta}(-\kappa+1)/{\kappa}}|x-y|^{\beta}=c_8\|f\|_\infty |x-y|^{\beta/\kappa}.
\end{align*}
Using \eqref{l2-4-5} and \eqref{l2-4-6}, we deduce
\begin{align*}
I_3(x,y)&=\int_1^\infty \left|P_1(P_{t-1}f)(x)-P_1(P_{t-1}f)(y)\right|\,dt\\
&\le c_7|x-y|^{\beta}\int_1^\infty \|P_{t-1}f\|_\infty\, dt\\
&\le c_9|x-y|^{\beta}\int_1^\infty e^{-c_{10}(t-1)}\|f\|_\infty dt\le c_{11}\|f\|_\infty |x-y|^\beta.
\end{align*}

Combining
all the estimates for $I_1(x,y)$, $I_2(x,y)$ and $I_3(x,y)$ yields the desired conclusion
\eqref{l2-4-4} with $\theta=\beta/\kappa$ when $\alpha=1$.
\end{proof}

  Recall that  $\{X_t^\e\}_{t\ge 0}:= \{\e X_{\e^{-\alpha}t}\}_{t\ge 0}$,
  the associated
   Dirichlet form on $L^2(\R^d; dx)$ is
$(\sE^\e, W^{\alpha/2, 2}(\R^d))$ defined by \eqref{e:1.5}, and that
  $(\sL_\e, {\rm Dom}  (\sL_\e))$
  is
  the $L^2$-generator of   $(\sE^\e, W^{\alpha/2, 2}(\R^d))$ on $L^2(\R^d; dx)$.
  For an open subset $D\subset \R^d$, denote by    $\{X_t^{\e,D}\}_{t\ge 0}$ the subprocess of
$\{X_t^\e\}_{t\ge 0}$ killed upon leaving $D$.  Its corresponding Dirichlet form $(\sE^\e,   W^{\alpha/2, 2}_0(D))$ is given by \begin{align*}
 \sE^\e(f,g)&=\frac{1}{2}\iint_{\R^d\times\R^d} (f(x)-f(y)) (g(x)-g(y))\frac{K\left({x}/{\e},{y}/{\e}\right)}{|x-y|^{d+\alpha}}\,dx\,dy,
 \quad f, g \in   W^{\alpha/2, 2}_0(D)  ,   \\
  W^{\alpha/2, 2}_0(D)   &:=
 \overline{C_c^2(D)}^{ \|\cdot\|_{\alpha/2, 2}}.
\end{align*}
Denote the infinitesimal generator of $(\sE^\e,   W^{\alpha/2, 2}_0(D))$  by
$(\LL_\e,{\rm Dom}  (\LL_\e^D)).$

\begin{lemma}\label{l2-9}
 Suppose that $D$ is a bounded  open subset of $\R^d$.  Then there exists a constant $C_1>0$ such that for every  $\e\in (0,1)$,
\begin{equation}\label{l2-9-1}
\int_D |f(x)|^2 \,dx\le C_1\sE^\e(f,f) \quad \hbox{for } f\in
  W^{\alpha/2, 2}_0(D).
\end{equation}
\end{lemma}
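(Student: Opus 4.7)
The plan is to reduce the claim to a $\e$-free fractional Poincar\'e-type inequality by exploiting the uniform two-sided bound on $K$, and then prove that reduced inequality by testing against the mass of $f$ sitting inside the bounded set $D$.

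First, since $\Lambda^{-1} \le K(x/\e, y/\e) \le \Lambda$ holds for all $x,y \in \R^d$ and all $\e > 0$ by \eqref{e:1.1}, the form $\sE^\e$ is comparable, uniformly in $\e$, to the reference form
\begin{equation*}
\sE^{(\alpha)}(f,f) := \frac12 \iint_{\R^d \times \R^d} \frac{(f(x)-f(y))^2}{|x-y|^{d+\alpha}}\,dx\,dy.
\end{equation*}
More precisely, $\sE^\e(f,f) \ge \Lambda^{-1} \sE^{(\alpha)}(f,f)$ for every $f \in W^{\alpha/2,2}_0(D)$. So the lemma will follow once we show
\begin{equation*}
\int_D |f(x)|^2\,dx \le C\, \sE^{(\alpha)}(f,f) \qquad \text{for all } f \in W^{\alpha/2,2}_0(D),
\end{equation*}
with a constant $C=C(D,d,\alpha)$ depending only on the geometry of $D$ and on $d,\alpha$ (in particular independent of $\e$).

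To establish this reduced inequality, I would use the fact that functions in $W^{\alpha/2,2}_0(D)$ vanish a.e.\ outside the bounded set $D$. Set $R := 2\,{\rm diam}(D)$ and, after translating, assume $D \subset B(0,R/2)$. Then for every $x \in D$ and every $y$ with $|y| > R$ we have $y \in D^c$ and hence $f(y)=0$, so that $(f(x)-f(y))^2 = f(x)^2$. Restricting the double integral to this region yields
\begin{equation*}
\sE^{(\alpha)}(f,f) \;\ge\; \frac12 \int_D f(x)^2 \left( \int_{\{|y|>R\}} \frac{dy}{|x-y|^{d+\alpha}} \right) dx \;\ge\; c_0 \int_D f(x)^2\,dx,
\end{equation*}
where
\begin{equation*}
c_0 := \frac12 \inf_{x \in D} \int_{\{|y|>R\}} \frac{dy}{|x-y|^{d+\alpha}} \ge \frac{|\bS^{d-1}|}{2\alpha\,(3R/2)^{\alpha}} > 0,
\end{equation*}
since for $x\in D$ and $|y|>R$ one has $|x-y| \le 3R/2$... \emph{lower}-bounding requires $|x-y| \le$ some quantity, so more carefully use $|x-y| \le |x|+|y|$ and rather bound below by $\int_{|y-x|>3R/2} |y-x|^{-(d+\alpha)}\,dy$ after noting $\{|y|>R\}\supset \{|y-x|>3R/2\}$.

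Combining the two steps gives the uniform inequality $\int_D |f|^2\,dx \le (\Lambda/c_0)\sE^\e(f,f)$, which proves \eqref{l2-9-1} with $C_1 = \Lambda/c_0$. No serious obstacle is expected: the only thing to watch is ensuring that every intermediate constant is independent of $\e$, which is automatic since the uniform ellipticity of $K$ in \eqref{e:1.1} has no $\e$-dependence, and the geometric constant $c_0$ depends only on ${\rm diam}(D)$, $d$ and $\alpha$.
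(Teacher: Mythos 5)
Your proof is correct, and it takes a genuinely different route for the key step. Both you and the paper begin identically, using the two-sided bound $\Lambda^{-1}\le K\le\Lambda$ from \eqref{e:1.1} to reduce the claim to the $\e$-free fractional Poincar\'e inequality $\int_D|f|^2\,dx\le c\,\sE^{(\alpha)}(f,f)$ for $f\in W^{\alpha/2,2}_0(D)$. Where you diverge is in proving that reduced inequality: the paper cites the compactness of the Dirichlet semigroup $\bar P_t^D$ of the rotationally symmetric $\alpha$-stable process on the bounded domain $D$, hence discreteness of its $L^2$-spectrum and strict positivity of the bottom eigenvalue. Your argument is more elementary and self-contained: since any $f\in W^{\alpha/2,2}_0(D)$ vanishes a.e.\ on $D^c$, you restrict the double integral to $(x,y)\in D\times\{|y|>R\}$ where $(f(x)-f(y))^2=f(x)^2$, and then bound the resulting inner integral $\int_{\{|y|>R\}}|x-y|^{-(d+\alpha)}\,dy$ from below uniformly over $x\in D$ via the inclusion $\{|y|>R\}\supset\{|y-x|>3R/2\}$. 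This buys you a completely explicit constant $C_1=\Lambda/c_0$ with $c_0$ depending only on $\mathrm{diam}(D)$, $d$, $\alpha$, whereas the paper's spectral argument is shorter to state but offloads the work to cited results on the Dirichlet fractional Laplacian and does not yield an explicit constant. One small wrinkle: the parenthetical sentence claiming $|x-y|\le 3R/2$ for all $x\in D$, $|y|>R$ is wrong (that quantity is unbounded), but you immediately flag this yourself and replace it with the correct containment $\{|y|>R\}\supset\{|y-x|>3R/2\}$, so the final argument stands.
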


\begin{proof}
Since the function $K (x, y)$ is bounded between two positive constants, it suffices to show that
there is a constant $c_1>0$ so that
\begin{equation}\label{l2-9-2}
\int_D |f(x)|^2 \,dx\le c_1\iint_{\R^d\times\R^d}\frac{|f(x)-f(y)|^2}{|x-y|^{d+\alpha}}\,dx\,dy  \quad
\hbox{for } f \in W^{\alpha/2, 2}_0(D) .
\end{equation}
Let $\{\bar P_t^D\}_{t\ge 0}$ be the Dirichlet semigroup on $D$ associated with
a rotationally symmetric
 $\alpha$-stable L\'evy process $\bar X$, whose infinitesimal generator is $\bar \LL$. Since $D$ is bounded, it well known that
$\bar P_t^D$ is compact on $L^2(D;dx)$ (see e.g. \cite{CKS} or \cite{CS1}), and
its first eigenvalue is strictly
less than 1.
This fact implies the inequality \eqref{l2-9-2} immediately, and so we can obtain the desired assertion.
\end{proof}

\begin{lemma}\label{l2-1}
Suppose that $D$ is a bounded  open subset of $\R^d$ and  $f\in L^2(D;dx)$.
Then for every $\e >0$, there is a unique $\varphi_\e \in
{\rm Dom}  (\LL_\e^D)$ so that
 $ \LL_\e \varphi_\e =  f$ on $D$, and
 there exists a constant
$C_1>0$
 such that for all $\e \in (0,1)$,
\begin{equation}\label{l2-1-1}
 \int_D|\varphi_\e(x)|\,dx\le
C_1\int_D |f(x)|\,dx.
\end{equation}
Moreover, if, in  addition,
 $f$ is bounded on $D$, then there is a constant
$C_2>0$
such that for all $\e \in (0,1)$,
$$\|\varphi_\e\|_{L^\infty(D;dx)}\le
C_2.$$
\end{lemma}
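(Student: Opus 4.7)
The plan is to combine Lax--Milgram with a Green-function representation. First, Lemma \ref{l2-9} shows that $\sE^\e$ is coercive on $W^{\alpha/2,2}_0(D)$ uniformly in $\e\in(0,1)$, so for each $f\in L^2(D;dx)$ the Lax--Milgram theorem produces a unique $\varphi_\e \in W^{\alpha/2,2}_0(D) \subset \operatorname{Dom}(\LL_\e^D)$ satisfying
\[
\sE^\e(\varphi_\e,v) \;=\; -\int_D f(x)\,v(x)\,dx \qquad \hbox{for all } v\in W^{\alpha/2,2}_0(D),
\]
which is the weak formulation of $\LL_\e \varphi_\e = f$ on $D$ (and uniqueness in $\operatorname{Dom}(\LL_\e^D)$ follows since any $\LL_\e$-harmonic function vanishing on $D^c$ must be zero, again by Lemma \ref{l2-9}).

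Next, I would represent $\varphi_\e$ through the Green function $G^\e_D(x,y)$ of the killed process $X^{\e,D}$. Symmetry of $K$ gives $G^\e_D(x,y)=G^\e_D(y,x)$, and Dynkin's formula, together with the uniqueness above, yields
\[
\varphi_\e(x) \;=\; -\int_D G^\e_D(x,y)\,f(y)\,dy.
\]
Both conclusions then reduce to the single uniform estimate
\[
M \;:=\; \sup_{\e\in (0,1)}\sup_{x\in D} \Ee^x[\tau_D^\e] \;=\; \sup_{\e\in (0,1)}\sup_{x\in D}\int_D G^\e_D(x,y)\,dy \;<\;\infty,
\]
where $\tau_D^\e$ is the first exit time of $X^\e$ from $D$. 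When $f$ is bounded this immediately gives $|\varphi_\e(x)| \le M\|f\|_{L^\infty(D)}$, while for the $L^1$ bound, Fubini combined with the symmetry of $G^\e_D$ yields
\[
\int_D |\varphi_\e(x)|\,dx \;\le\; \int_D |f(y)|\int_D G^\e_D(y,x)\,dx\,dy \;\le\; M \int_D |f(y)|\,dy.
\]

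The main obstacle is establishing $M<\infty$ uniformly in $\e$. By \eqref{e:1.1} the jumping kernel $K(x/\e,y/\e)/|x-y|^{d+\alpha}$ is sandwiched between $\Lambda^{-1}|x-y|^{-d-\alpha}$ and $\Lambda |x-y|^{-d-\alpha}$ independently of $\e$, so $\{X^\e:\e\in(0,1)\}$ is a uniform family of symmetric $\alpha$-stable-like processes and the heat kernel upper bound of \cite[Theorem 1.1]{CK03} on $\R^d$ gives $p^\e(t,x,y)\le c(t^{-d/\alpha}\wedge t|x-y|^{-d-\alpha})$ with $c$ independent of $\e$. Integrating the resulting bound for $G^\e_D(x,y)$ against the bounded set $D$ (or equivalently, enlarging $D$ to a ball $B_R$ and invoking domain monotonicity together with the well-known exit-time bound $\sup_x \Ee^x[\tau_{B_R}^\e] \le CR^\alpha$ that comes from the comparability of the jumping kernels with that of a rotationally symmetric $\alpha$-stable process) produces $M\le C(\operatorname{diam}(D))^\alpha$, with $C$ depending only on $d,\alpha,\Lambda$. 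This completes the argument.
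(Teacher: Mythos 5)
Your argument follows the paper's proof essentially verbatim: coercivity from Lemma \ref{l2-9} combined with Riesz representation/Lax--Milgram produces the unique weak solution, the Green function representation converts both bounds into the uniform exit-time estimate $\sup_{\e\in(0,1)}\sup_{x\in D}\Ee_x[\tau_D^\e]<\infty$, and that estimate is exactly \cite[Lemma 5.1]{CK03} applied to a ball containing $D$ via domain monotonicity. One small slip worth noting: the inclusion $W^{\alpha/2,2}_0(D)\subset {\rm Dom}(\LL_\e^D)$ you assert is backwards (the generator domain is the \emph{smaller} set inside the form domain); the correct reason $\varphi_\e\in{\rm Dom}(\LL_\e^D)$ is that the functional $g\mapsto\sE^\e(\varphi_\e,g)$ is represented by the $L^2$ function $-f$, which is precisely the defining property of membership in the generator domain.
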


\begin{proof}
Since $D$ is bounded,
$D \subset B(0,R)$ for some $R>0$.
Suppose $f\in L^2(D; dx)$. By  \eqref{l2-9-1} and the Riesz representation theorem, there is a unique $\varphi_\e\in W^{\alpha/2, 2}_0(D)$
so that
$$
\sE^\e (\varphi_\e, g) =   - \int_D f(x) g(x) \,dx \quad \hbox{for every } g\in W^{\alpha/2, 2}_0(D).
$$
It follows that $\varphi_\e \in {\rm Dom}  (\LL_\e^D)$ with $
\LL_\e\varphi_\e=  f$.
In fact, by \cite{CF},
\begin{equation}\label{l2-1-2}
\varphi_\e(x)=\Ee_x\left[\int_0^{\tau_D^\e}f(X_s)\,ds\right]=\int_D G_D^\e(x,y)f(y)\,dy,\quad
{\rm a.e.}\ x\in D.
\end{equation}
where $\tau_D^\e:=\inf\{t>0: X_t^\e\notin D\}$ is the first exit time from $D$ for the process
$\{X_t^\e\}_{t\ge 0}$, and
$ G_D^\e  (x, y)$ is the Green function of $X^\e$ in $D$.
Note that
\begin{equation}\label{Add1}
 G^\e_D 1 (x):=  \int_D G_D^\e(x,y)\,dx=\Ee_y\left[\tau_D^\e\right],\quad  y\in D.
\end{equation}
  By the symmetry of $G_D^\e(x, y)$ in $(x, y)$, we have from \eqref{l2-1-2}   that
\begin{equation}\label{l2-1-3}
\int_D|\varphi_\e(x)|\,dx
 \leq \int_D |f(x)| G^\e_D 1(x) \,dx
\le \sup_{y\in D}\Ee_y\left[\tau_D^\e\right]\cdot \int_D|f(x)|\,dx.
\end{equation}
  By  \cite[Lemma 5.1]{CK03},  there is a constant  $c_1>0$ that depends only on
the bound $\Lambda\ge 1$
in \eqref{e:1.1} for the function
$K(x, y)$ and the dimension $d$ so that
\begin{equation}\label{Add2}
\sup_{\e\in (0,1)}\sup_{y\in D}\Ee_y\left[\tau_D^\e\right]\le
\sup_{\e\in (0,1)}\sup_{y\in B(0,R)}\Ee_y\left[\tau_{B(0,R)}^\e\right]\le
c_1  R^\alpha.
\end{equation}
This together with \eqref{l2-1-3} yields the estimate \eqref{l2-1-1}.

Furthermore, the last assertion is a direct consequence of \eqref{l2-1-2}, \eqref{Add1} and \eqref{Add2} as well as the fact that $ G_D^\e(x,y)= G_D^\e(y,x)$ for all $x,y\in D$.
\end{proof}

\subsection{Distance functions and cut-off functions}

It is well known that (see e.g. \cite[Theorem 2, p.\ 171]{Stein})
for any bounded open set $D\subset \R^d$, there is a $C^\infty$-smooth regularizing distance function $\tilde \delta_D$ on $D$ so that there are
positive constants $c_1$, $c_2$ and $c_3$ such that
$$
c_1 \delta_D (x) \leq \tilde \delta_D (x) \leq c_2 \delta_D(x)  \quad \hbox{and} \quad
  | \nabla \tilde \delta_D (x)| \leq c_3
\quad \hbox{for every } x\in D.
$$
For any $r>0$, we set $D_r:=\{x\in D:
\wt \delta_D(x)>r\}$.

\begin{lemma}\label{l2-5}
Suppose that $D$ is a bounded Lipschitz domain  in $\R^d$.
Then the following statements hold.
\begin{itemize}
\item [(1)] When $\alpha\in (1,2)$, there is a constant $C_1>0$ so that for every $s,\e\in (0,1)$ and $x\in D$,
\begin{equation}\label{l2-5-1}
\begin{split}
& \int_{\{|z|>s \e\}}\delta_D(x+z)^{-2+\alpha/2}\I_{\{x+z\in D_{3\e}\}}\frac{1}{|z|^{d+\alpha-1}}\,dz\\
&\le C_1\left((s\e)^{1-\alpha}\delta_D(x)^{-2+\alpha/2}+\e^{-1+\alpha/2}\delta_D(x)^{-\alpha}\right)\I_{D_{4\e}}(x)+C_1s^{1-\alpha}\e^{-1-\alpha/2}\I_{D\backslash D_{4\e}}(x)
\end{split}
\end{equation} and
\begin{equation}\label{l2-5-2}
\begin{split}
\int_{\{|z|>s \e\}}\I_{\{x+z\in D\backslash D_{3\e}\}}\frac{1}{|z|^{d+\alpha-1}}\,dz\le
C_1\e\delta_D(x)^{-\alpha}\I_{D_{4\e}}(x)+C_1(s\e)^{1-\alpha}\I_{D\backslash D_{4\e}}(x).
\end{split}
\end{equation}
\item [(2)] When $\alpha\in (0,1]$, there is a constant $C_2>0$ so that for every $s,\e\in (0,1)$ and $x\in D$,
\begin{equation}\label{l2-5-3}
\begin{split}
&  \int_{\{s\e<|z|\le s\}}\delta_D(x+z)^{-2+\alpha/2}\I_{\{x+z\in D_{3\e}\}}\frac{1}{|z|^{d+\alpha-1}}\,dz\\
&\le C_2\left( \delta_D(x)^{-2+\alpha/2}\left(
\delta_D(x)^{1-\alpha}\I_{\{\alpha\in (0,1)\}}+
(1+|\log \e|)\I_{\{\alpha=1\}}\right) +\e^{-1+\alpha/2}
\delta_D(x)^{-\alpha}\I_{\{s>\delta_D(x)/4\}}
\right) \I_{D_{4\e}}(x)\\
&\quad
+
C_2\e^{-2+\alpha/2}\left(s^{1-\alpha}\I_{\{\alpha\in (0,1)\}}+(1+|\log \e|)\I_{\{\alpha=1\}}\right)
\I_{D\backslash D_{4\e}}(x),
\end{split}
\end{equation}  and
\begin{equation}\label{l2-5-4}
\begin{split}
&  \int_{\{s\e<|z|\le s\}}\I_{\{x+z\in D\backslash D_{3\e}\}}\frac{1}{|z|^{d+\alpha-1}}\,dz\\
&\le  C_2\e\delta_D(x)^{-\alpha} \I_{D_{4\e}}(x)+
C_2\left(s^{1-\alpha}\I_{\{\alpha\in (0,1)\}}+(1+|\log \e|)\I_{\{\alpha=1\}}\right)
\I_{D\backslash D_{4\e}}(x),
\end{split}
\end{equation}
 as well as
\begin{equation}\label{l2-5-5}
\begin{split}
& \int_{\{|z|>s\}}\left(\delta_D(x+z)^{-1+\alpha/2}\I_{\{x+z\in D_{3\e}\}}+\e^{-1+\alpha/2}\I_{\{x+z\in D\backslash D_{3\e} \}}\right)\frac{1}{|z|^{d+\alpha}}\,dz
\le C_2\e^{-1+\alpha/2}s^{-\alpha}.
\end{split}
\end{equation}
\item [(3)] When $\alpha\in (0,1)$, for every $\beta\in (\alpha/2,\alpha)$ there is a constant $C_3>0$ so that for all $\e\in (0,1)$ and $x\in D$,
\begin{equation}\label{l2-5-3a}
\begin{split}
&  \int_{\{|z|>\e\}}\delta_D(x+z)^{-1-\beta+\alpha/2}\I_{\{x+z\in D_{3\e}\}}\frac{1}{|z|^{d+\alpha-\beta}}\,dz\\
&\le C_3\left(\e^{-(\beta-\alpha/2)}\delta_D(x)^{-1-(\alpha-\beta)}+\e^{-(\alpha-\beta)}\delta_D(x)^{-1-\beta+\alpha/2}\right)\I_{D_{4\e}}(x)+
C_3\e^{-1-\alpha/2}\I_{D\backslash D_{4\e}}(x)
\end{split}
\end{equation} and
\begin{equation}\label{l2-5-4a}
\begin{split}
&\quad \int_{\{|z|>\e\}}\I_{\{x+z\in D\backslash D_{3\e}\}}\frac{1}{|z|^{d+\alpha-\beta}}\,dz
\le  C_3\e\delta_D(x)^{-1-(\alpha-\beta)}\I_{D_{4\e}}(x)+
C_3\e^{-(\alpha-\beta)}\I_{D\backslash D_{4\e}}(x).
\end{split}
\end{equation}
\end{itemize}
\begin{proof}
(1) Suppose that $\alpha\in (1,2)$. Let $s,\e\in (0,1)$. Then, for every $x\in D_{4\e}$, it holds that
\begin{align*}
&\int_{\{|z|>s \e\}}\delta_D(x+z)^{-2+\alpha/2}\I_{\{x+z\in D_{3\e}\}}\frac{1}{|z|^{d+\alpha-1}}\,dz\\
&\le c_{1}\Bigg(\delta_D(x)^{-2+\alpha/2}\int_{\{s\e<|z|\le {\delta_D(x)}/{4}\}}\frac{1}{|z|^{d+\alpha-1}}\,dz
+\int_{\{
x+z\in D:
|z|>{\delta_D(x)}/{4},\delta_D(x+z)>3\e\}}
\frac{\delta_D(x+z)^{-2+\alpha/2}}{|z|^{d+\alpha-1}}\,dz\Bigg)\\
&=:I_{1}(x)+I_{2}(x),
\end{align*}
where in the inequality above we used
\begin{align}\label{l2-5-7}
\delta_D(x+z)\ge \delta_D(x)/2 \quad \hbox{for }
 |z|\le {\delta_D(x)}/{4}.
\end{align}
It is clear that
\begin{align*}
I_{1}(x)&\le c_{2}\delta_D(x)^{-2+\alpha/2}(s\e)^{1-\alpha}.
\end{align*}
On the other hand,
\begin{equation}\label{l2-2-6}
\begin{split}
I_{2}(x)&\le \sum_{k=0}^{\infty}
\int_{\{
x+z\in D:
{2^k\delta_D(x)}/{4}<|z|\le {2^{k+1}\delta_D(x)}/{4}, \delta_D(x+z)>3\e\}}
\frac{\delta_D(x+z)^{-2+\alpha/2}}{|z|^{d+\alpha-1}}\,dz\\
&\le c_{3}\sum_{k=0}^\infty(2^k\delta_D(x))^{-d-\alpha+1}\int_{\{
x+z\in D:
{2^k\delta_D(x)}/{4}<|z|\le {2^{k+1}\delta_D(x)}/{4},\delta_D(x+z)>3\e\}}
\delta_D(x+z)^{-2+\alpha/2}\,dz\\
&=:c_3\sum_{k=0}^\infty (2^k\delta_D(x))^{-d-\alpha+1} I_{2k}(x).
\end{split}
\end{equation}
By
a
change of variable, we have
\begin{align*}
I_{2,k}(x)=\int_{\{z\in D: 2^k\delta_D(x)/4<|z-x|\le 2^{k+1}\delta_D(x)/4, \delta_D(z)>3\e\}}\delta_D(z)^{-2+{\alpha}/{2}}\,dz.
\end{align*}
If $\delta_D(x)>c_4 2^{-k}$ for some $c_4>0$ to be determined later, then it holds that
\begin{align*}
I_{2,k}(x)&\le \int_{\{z\in D: \delta_D(x)>3\e\}}\delta_D(z)^{-2+{\alpha}/{2}}\,dz\\
&\le c_5\int_{3\e}^\infty s^{-2+{\alpha}/{2}}\,ds\le c_6\left(2^{k+1}\delta_D(x)\right)^{d-1}\e^{-1+{\alpha}/{2}},
\end{align*}
where the last inequality due to the fact $2^k\delta_D(x)\ge c_4$.

When $\delta_D(x)\le c_4 2^{-k}$, since $D$ is bounded and Lipschitz, we can choose $c_4>0$ small enough so that there exists
a change of coordinates $\Psi:B\left(x,2^{k+1}\delta_D(x)/4\right)\cap D\to  B\left(x',2^{k+1}\delta_D(x)\right)\cap \mathbb{H}_+$ with
$\mathbb{H}_+:=\{z\in \R^d: z=(z_1,\cdots, z_d),\ z_d>0\}$ which satisfies that
\begin{itemize}
\item [(i)] $\Psi$ is a homeomorphism, $\Psi(x)=x'$ and
\begin{align*}
&|\Psi(x_1)-\Psi(x_2)|\le c_7|x_1-x_2| \quad \hbox{for } x_1,x_2\in B\left(x,2^{k+1}\delta_D(x)/4\right)\cap D,\\
&|\Psi^{-1}(z_1)-\Psi^{-1}(z_2)|\le c_7|z_1-z_2| \quad  \hbox{for }   z_1,z_2\in  B\left(x',2^{k+1}\delta_D(x)\right)\cap \mathbb{H}_+;
\end{align*}
\item [(ii)]
$$c_8\delta_D(z)\le \delta_{\mathbb{H}_+}\left(\Psi(z)\right)\le c_7\delta_D(z)
\quad \hbox{for }  z\in B\left(x,2^{k+1}\delta_D(x)/4\right).$$
\end{itemize}
Here, $\delta_{\mathbb{H}_+}(z):=z_d$, for any $z\in \mathbb{H}_+$, denotes the distance from $z\in \mathbb{H}_+$ to the boundary $\mathbb{H}_+$. In particular,
we remark
that all the constants $c_i$ above are independent of $x$.

According to these properties above and
a
change of variable
$z\mapsto \Psi(z)$, we derive
\begin{align*}
I_{2,k}&\le c_9\int_{\{z\in \mathbb{H}_+: |z-x'|\le c_{10}2^{k+1}\delta_D(x), \delta_{\mathbb{H}_+}(z)=z_d>c_{11}\e\}}\delta_{\mathbb{H}_+}(z)^{-2+\alpha/2}\,dz\\
&\le c_9\int_{\{z\in \mathbb{H}_+: \sup_{1\le i \le d-1}|z_i-x'_i|\le c_{10}2^{k+1}\delta_D(x), z_d>c_{11}\e\}}z_d^{-2+\alpha/2}\,dz\\
&\le c_{12}\left(2^{k+1}\delta_D(x)\right)^{d-1}\int_{c_{11}\e}^\infty s^{-2+{\alpha}/{2}} \,ds\le c_{13}\left(2^{k+1}\delta_D(x)\right)^{d-1}\e^{-1+\alpha/2}.
\end{align*}
Therefore, summarising all the estimates above together, we arrive at
\begin{equation}\label{l2-5-6a}\begin{split}
I_2(x)&\le c_3\sum_{k=0}^\infty (2^k\delta_D(x))^{-d-\alpha+1} I_{2,k}(x)\le c_{14}\e^{-1+\alpha/2}\sum_{k=0}^\infty(2^k\delta_D(x))^{-\alpha}
\le c_{15}\e^{-1+\alpha/2}\delta_D(x)^{-\alpha}.
\end{split}\end{equation}

When $x\in D\backslash D_{4\e}$, for any $s,\e\in (0,1)$, it holds that
\begin{equation}\label{l2-5-6}
\begin{split}
& \int_{\{|z|>s \e\}}\left(\delta_D(x+z)^{-2+\alpha/2}\I_{\{x+z\in D_{3\e}\}}+\e^{-2+\alpha/2}\I_{\{x+z\in D \backslash D_{3\e}\}}\right)\frac{1}{|z|^{d+\alpha-1}}\,dz\\
&\le c_{16}\e^{-2+\alpha/2}\int_{\{|z|>s\e\}}\frac{1}{|z|^{d+\alpha-1}}\,dz\le c_{17}s^{1-\alpha}\e^{-1-\alpha/2}.
\end{split}
\end{equation}
Combining this with the estimates above for $I_1(x)$ and $I_2(x)$  yields  \eqref{l2-5-1}.

Note that, for any $x\in D_{4\e}$ and $s,\e\in (0,1)$, if
 $\delta_D(x+z)\le 3\e$, then it must hold that $|z|>{\delta_D(x)}/{4}$. By
 this observation, we obtain
\begin{equation}\label{l2-2-7}
\begin{split}
 \int_{\{|z|>s \e, \delta_D(x+z)\le 3\e\}}\frac{1}{|z|^{d+\alpha-1}}\,dz
&\le \int_{\{|z|>{\delta_D(x)}/{4}, \delta_D(x+z)\le 3\e\}}\frac{1}{|z|^{d+\alpha-1}}\,dz\\
&\le \sum_{k=0}^\infty \int_{\{{2^k\delta_D(x)}/{4}<|z|\le {2^{k+1}\delta_D(x)}/{4}, \delta_D(x+z)\le 3\e\}}
\frac{1}{|z|^{d+\alpha-1}}\,dz\\
&\le c_{18}\sum_{k=0}^\infty (2^k\delta_D(x))^{-d-\alpha+1}\int_{\{{2^k\delta_D(x)}/{4}<|z|\le {2^{k+1}\delta_D(x)}/{4}, \delta_D(x+z)\le 3\e\}}dz\\
&\le c_{19}\sum_{k=0}^\infty (2^k\delta_D(x))^{-d-\alpha+1}\left(2^{k+1}\delta_D(x)\right)^{d-1}\e\le c_{20}\e\delta_D(x)^{-\alpha}.
\end{split}
\end{equation}
Here in the fourth inequality above we have used the fact
\begin{align*}
\int_{\{{2^k\delta_D(x)}/{4}<|z|\le {2^{k+1}\delta_D(x)}/{4}, \delta_D(x+z)\le 3\e\}}dz\le c_{21}\e\left(2^{k+1}\delta_D(x)\right)^{d-1},
\end{align*}
which can be verified by the same argument for the estimate of $I_2(x)$ above.
This, along with \eqref{l2-5-6}, in turn implies \eqref{l2-5-2}.

(2) For simplicity we only prove \eqref{l2-5-3} and \eqref{l2-5-4} for the case that $\alpha=1$, since the case
$\alpha\in (0,1)$ can be proved almost in the same way.
Below in Step (2), we let $\alpha=1$ unless particularly stressed.
For every $x\in D_{4\e}$, it holds for every $s,\e \in (0,1)$ that
\begin{align*}
&  \int_{\{s\e<|z|\le s\}}\delta_D(x+z)^{-2+\alpha/2}\I_{\{x+z\in D_{3\e}\}}\frac{1}{|z|^{d+\alpha-1}}\,dz\\
&\le c_{22}\Bigg(\delta_D(x)^{-2+\alpha/2}\int_{\left\{s\e<|z|\le
\min\left({\delta_D(x)}/{4},s\right),
\delta_D(x+z)>3\e\right\}}\frac{1}{|z|^{d+\alpha-1}}\,dz\\
&\qquad\quad +\int_{\left\{
\min\left({\delta_D(x)}/{4},s\right)
<|z|\le s,\delta_D(x+z)>3\e\right\}}
\frac{\delta_D(x+z)^{-2+\alpha/2}}{|z|^{d+\alpha-1}}\,dz\Bigg)\\
&=:J_{1}(x)+J_{2}(x),
\end{align*}
where we have used \eqref{l2-5-7} in the inequality above for the term $J_1(x)$.
It is obvious that
\begin{align*}
J_{1}(x)\le c_{23}\delta_D(x)^{-2+\alpha/2}
(1+|\log \e|).
\end{align*}
Furthermore, following the argument for
the estimate
of $I_2(x)$ above,
we can obtain that
\begin{align*}
J_{2}(x)&
\le c_{24}\I_{\{s> \delta_D(x)/4\}}\int_{\left\{|z|>\delta_D(x)/{4},\delta_D(x+z)>3\e\right\}}\frac{\delta_D(x+z)^{-2+\alpha/2}}{|z|^{d+\alpha-1}}\,dz
\le c_{25}\e^{-1+\alpha/2}\delta_D(x)^{-\alpha}\I_{\{s> \delta_D(x)/4\}}.
\end{align*}

When $x\in D\backslash D_{4\e}$, we have, for any $s,\e\in (0,1)$,
\begin{equation}\label{l2-5-8}
\begin{split}
&\int_{\{s\e<|z|\le s \}}\left(\delta_D(x+z)^{-2+\alpha/2}\I_{\{x+z\in D_{3\e}\}}+\e^{-2+\alpha/2}\I_{\{x+z\in D \backslash D_{3\e}\}}\right)\frac{1}{|z|^{d+\alpha-1}}\,dz\\
&\le c_{26}\e^{-2+\alpha/2}\int_{\{s\e<|z|\le s\}}\frac{1}{|z|^{d+\alpha-1}}\,dz\le c_{27}
\e^{-2+\alpha/2}(1+|\log \e|).
\end{split}
\end{equation}
Combining this with the estimates above for $J_1(x)$ and  $J_2(x)$ yields  \eqref{l2-5-3}.

According to the argument of \eqref{l2-2-7}, we deduce that for every $x\in D_{4\e}$ and $s,\e \in (0,1)$,
\begin{align*}
&\int_{\{s\e<|z|\le s, \delta_D(x+z)\le 3\e\}}\frac{1}{|z|^{d+\alpha-1}}\,dz
\le \int_{\{|z|> \delta_D(x)/4, \delta_D(x+z)\le 3\e\}}\frac{1}{|z|^{d+\alpha-1}}\,dz
\le c_{28}\e\delta_D(x)^{-\alpha}.
\end{align*}
By this and \eqref{l2-5-8}, we prove \eqref{l2-5-4}.

For every $\alpha\in (0,1]$, $x\in D$ and
$s,\e\in (0,1)$, we have
\begin{align*}
&\int_{\{|z|> s \}}\left(\delta_D(x+z)^{-1+\alpha/2}\I_{\{x+z\in D_{3\e}\}}+\e^{-1+\alpha/2}\I_{\{x+z\in D \backslash D_{3\e}\}}\right)\frac{1}{|z|^{d+\alpha}}\,dz\\
&\le c_{29}\e^{-1+\alpha/2}\int_{\{|z|> s\}}\frac{1}{|z|^{d+\alpha}}\,dz\le c_{30}\e^{-1+\alpha/2}s^{-\alpha}.
\end{align*}
So \eqref{l2-5-5} is proved.

(3) Suppose that $\alpha\in (0,1)$. For every $\beta\in (\alpha/2,\alpha)$, $\e\in (0,1)$ and $x\in D_{4\e}$,  following the same argument as that for \eqref{l2-5-1}, we have
\begin{align*}
&\int_{\{|z|>\e\}}\delta_D(x+z)^{-1-\beta+\alpha/2}\I_{\{x+z\in D_{3\e}\}}\frac{1}{|z|^{d+\alpha-\beta}}\,dz\\
&\le c_{31}\Bigg(\delta_D(x)^{-1-\beta+\alpha/2}\int_{\{\e<|z|\le{\delta_D(x)}/{4}\}}\frac{1}{|z|^{d+\alpha-\beta}}\,dz+\int_{\{|z|>{\delta_D(x)}/{4},\delta_D(x+z)>3\e\}}
\frac{\delta_D(x+z)^{-1-\beta+\alpha/2}}{|z|^{d+\alpha-\beta}}\,dz\Bigg)\\
&=:L_{1}(x)+L_{2}(x).
\end{align*}
  Furthermore, one can see that
 $$
L_1(x)\le c_{32}\delta_D(x)^{-1-\beta+\alpha/2}\e^{-(\alpha-\beta)}.
$$
Similar to the
argument for the estimates of $I_2(x)$ above
we have
\begin{align*}
L_{2}(x)&\le \sum_{k=0}^{\infty}
\int_{\{{2^k\delta_D(x)}/{4}<|z|\le {2^{k+1}\delta_D(x)}/{4},\delta_D(x+z)>3\e\}}
\frac{\delta_D(x+z)^{-1-\beta+\alpha/2}}{|z|^{d+\alpha-\beta}}\,dz\\
&\le c_{33}\sum_{k=0}^\infty(2^k\delta_D(x))^{-d-\alpha+\beta}\int_{\{{2^k\delta_D(x)}/{4}<|z|\le {2^{k+1}\delta_D(x)}/{4},\delta_D(x+z)>3\e\}}
\delta_D(x+z)^{-1-\beta+\alpha/2}\,dz\\
&\le c_{34}\sum_{k=0}^\infty(2^k\delta_D(x))^{-d-\alpha+\beta}\left(2^{k+1}\delta_D(x)\right)^{d-1}
\int_{3\e}^{\infty}s^{-1-\beta+\alpha/2}\,ds\\
&\le c_{35}\e^{-(\beta-\alpha/2)}\delta_D(x)^{-1-(\alpha-\beta)}\sum_{k=0}^\infty 2^{-k(1+\alpha-\beta)}\le c_{36}\e^{-(\beta-\alpha/2)}
\delta_D(x)^{-1-(\alpha-\beta)}.
\end{align*}

When $x\in D\backslash D_{4\e}$ and $s,\e\in [0,1]$, it holds that
\begin{equation}\label{l2-5-9}
\begin{split}
&\int_{\{|z|>\e \}}\left(\delta_D(x+z)^{-1-\beta+\alpha/2}\I_{\{x+z\in D_{3\e}\}}+\e^{-1-\beta+\alpha/2}\I_{\{x+z\in D \backslash D_{3\e}\}}\right)\frac{1}{|z|^{d+\alpha-\beta}}\,dz\\
&\le c_{37}\e^{-1-\beta+\alpha/2}\int_{\{|z|>\e\}}\frac{1}{|z|^{d+\alpha-\beta}}\,dz\le c_{38}
\e^{-1-\alpha/2}.
\end{split}
\end{equation}
Combining this with the estimates above for $L_1(x)$ and $L_2(x)$ yields \eqref{l2-5-3a}.

Following the same argument as that for \eqref{l2-2-7}, we can show that
for every $x\in D_{4\e}$ and $s,\e\in [0,1]$,
\begin{align*}
&\int_{\{|z|>\e, \delta_D(x+z)\le 3\e\}}\frac{1}{|z|^{d+\alpha-\beta}}\,dz
\le c_{39}\e\delta_D(x)^{-1-(\alpha-\beta)}.\\
\end{align*}
By this and \eqref{l2-5-9},
we obtain \eqref{l2-5-4a}.
\end{proof}
\end{lemma}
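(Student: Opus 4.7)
\medskip

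\noindent\textbf{Proof proposal for Lemma \ref{l2-5}.}
The overall strategy is to split every integral into contributions from the interior regime $x\in D_{4\e}$ and the boundary regime $x\in D\setminus D_{4\e}$, and within each regime to decompose $z$ dyadically around the scale $\delta_D(x)/4$. The near-boundary case $x\in D\setminus D_{4\e}$ is the easier one: on this set one has $\delta_D(x+z)^{-2+\alpha/2}\I_{\{x+z\in D_{3\e}\}}\leq c\,\e^{-2+\alpha/2}$, so the integrand is dominated by a pure radial kernel and the claimed bounds drop out of the elementary estimates
\[
\int_{\{|z|>s\e\}}|z|^{-d-\alpha+1}\,dz\lesssim (s\e)^{1-\alpha},\qquad \int_{\{s\e<|z|\le s\}}|z|^{-d-\alpha+1}\,dz\lesssim (s\e)^{1-\alpha}\I_{\{\alpha<1\}}+(1+|\log\e|)\I_{\{\alpha=1\}},
\]
which account respectively for the $\I_{D\setminus D_{4\e}}$ terms in \eqref{l2-5-1}, \eqref{l2-5-2}, \eqref{l2-5-3}, \eqref{l2-5-4}, \eqref{l2-5-3a}, \eqref{l2-5-4a}. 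The analogous observation handles \eqref{l2-5-5} uniformly in $x$.

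In the interior regime $x\in D_{4\e}$, the plan is to use the key geometric fact that $\delta_D(x+z)\geq \delta_D(x)/2$ whenever $|z|\leq \delta_D(x)/4$. This immediately controls the piece of the integral with $|z|\leq \delta_D(x)/4$ by pulling out $\delta_D(x)^{-2+\alpha/2}$ (respectively $\delta_D(x)^{-1-\beta+\alpha/2}$ in part (3)) and computing the resulting radial integral; this contribution yields the first terms on the right-hand sides of \eqref{l2-5-1}, \eqref{l2-5-3}, \eqref{l2-5-3a}. The remaining and more delicate piece is the integral over $|z|>\delta_D(x)/4$, which I will split along dyadic annuli
\[
A_k(x):=\bigl\{z\in\R^d:\ 2^k\delta_D(x)/4<|z|\leq 2^{k+1}\delta_D(x)/4\bigr\},\qquad k\geq 0,
\]
extracting $|z|^{-d-\alpha+1}\lesssim (2^k\delta_D(x))^{-d-\alpha+1}$ on $A_k(x)$ and reducing the task to estimating
\[
I_{2,k}(x):=\int_{A_k(x)\cap\{\delta_D(x+z)>3\e\}}\delta_D(x+z)^{-2+\alpha/2}\,dz.
\]

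The main obstacle, and the step for which the Lipschitz hypothesis on $\partial D$ is essential, is the uniform estimate $I_{2,k}(x)\lesssim (2^k\delta_D(x))^{d-1}\e^{-1+\alpha/2}$. I expect to split into two subcases: when $2^k\delta_D(x)$ is bounded away from zero, one may bound the annulus by a fixed set inside $D$ and use only the one-dimensional radial integral $\int_{3\e}^\infty s^{-2+\alpha/2}\,ds\lesssim \e^{-1+\alpha/2}$; when $2^k\delta_D(x)$ is small, one flattens $\partial D$ via a Lipschitz change of coordinates $\Psi$ defined on a neighborhood of $x$ whose size is proportional to $2^{k+1}\delta_D(x)$, using the Lipschitz structure to get $\delta_D(z)\asymp \delta_{\mathbb H_+}(\Psi(z))$, reducing the integral to a product of a $(d-1)$-dimensional slab of side $\lesssim 2^k\delta_D(x)$ and the same one-dimensional integral in the normal direction. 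Summing the dyadic bound over $k\geq 0$ gives a geometric series $\sum_k (2^k\delta_D(x))^{-\alpha}\lesssim \delta_D(x)^{-\alpha}$, producing the second interior terms in \eqref{l2-5-1} and \eqref{l2-5-3}; the same scheme with exponent $-1-\beta+\alpha/2$ in place of $-2+\alpha/2$ yields \eqref{l2-5-3a}.

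The four remaining estimates \eqref{l2-5-2}, \eqref{l2-5-4} and \eqref{l2-5-4a} follow by the same dyadic/Lipschitz machinery but applied to the indicator $\I_{\{\delta_D(x+z)\leq 3\e\}}$ rather than to a power of $\delta_D(x+z)$; the point is that on the annulus $A_k(x)$ the set $\{\delta_D(x+z)\leq 3\e\}$ is contained, after Lipschitz flattening, in a $(d-1)$-dimensional slab of width $\lesssim 2^k\delta_D(x)$ times a one-dimensional interval of length $\lesssim \e$, which gives the linear factor of $\e$ and, after dyadic summation, the factor $\delta_D(x)^{-\alpha}$ in the interior bounds. Finally, \eqref{l2-5-5} requires no boundary work at all, since the integrand is dominated by $\e^{-1+\alpha/2}|z|^{-d-\alpha}$ and the estimate is a direct radial computation. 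The main technical subtlety throughout is ensuring that the constants in the Lipschitz flattening (and in particular the scale $c_4$ on which the chart exists) are uniform in $x$ and $k$, which is where boundedness and Lipschitz regularity of $D$ are both used.
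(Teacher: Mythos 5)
Your proposal reproduces the paper's argument faithfully: the same interior/boundary split around $D_{4\e}$, the same use of $\delta_D(x+z)\geq\delta_D(x)/2$ for $|z|\le\delta_D(x)/4$, the same dyadic decomposition of $\{|z|>\delta_D(x)/4\}$ into annuli $A_k(x)$ with the two subcases $2^k\delta_D(x)$ large versus small, and the same Lipschitz-chart flattening to $\mathbb H_+$ to estimate each $I_{2,k}(x)\lesssim(2^k\delta_D(x))^{d-1}\e^{-1+\alpha/2}$ before summing the geometric series. This is the paper's proof in outline form, and the outline is correct.
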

\begin{remark}

In fact, when $\alpha\in (1,2)$, a direct calculation yields that
\begin{align*}
I_2(x)&=\int_{\{
x+z\in D:
|z|>{\delta_D(x)}/{4},\delta_D(x+z)>3\e\}}
\frac{\delta_D(x+z)^{-2+\alpha/2}}{|z|^{d+\alpha-1}}\,dz\\
&\le c_1\e^{-2+\alpha/2}\int_{\{z\in D:|z|>\delta_D(x)/4\}}
\frac{1}{|z|^{d+\alpha-1}}\,dz\le c_2\e^{-2+\alpha/2}\delta_D(x)^{-1+\alpha/2}.
\end{align*}
However, this is not enough for our application later. So here we need some more effort to obtain a
stronger estimate \eqref{l2-5-6a}.
\end{remark}

Let
$\eta_\e \in C_c^\infty(D)$
be a cut-off function near
the boundary $\partial D$ (which can be constructed
 by  using the $C^\infty$ regularizing
distance function $\tilde \delta_D$)
so that $0\leq \eta_\e \leq 1$ on $\R^d$,
\begin{equation}\label{e2-2}
\begin{split}
\eta_\e(x)=
\begin{cases}
1,\ \ & x\in D_{2\e},\\
 0,\ \ & x\in
\R^d \setminus  D_{\e},
\end{cases}
\end{split}
\end{equation} and
\begin{equation}\label{e2-1}
|\nabla^k\eta_\e(x)|\le c_0\e^{-k}\I_{D_{2\e}\backslash D_{\e}}(x),\quad k=1,2,3,4,\ x\in D,
\end{equation}
where the constant $c_0$ is independent of $\e$ and $x$.
We
have the following estimate for $\bar \LL \eta_\e$, where $\bar \LL$ is
the fractional Laplacian given by  \eqref{e:1.7}.

\begin{lemma}\label{l2-6}  Suppose that $D$ is a bounded Lipschitz domain  in $\R^d$.
Then there exists a constant $C_1>0$ such that
\begin{equation}\label{l2-6-1}
\left|\bar \LL \eta_\e(x)\right|\le C_1\left(\e\delta_D(x)^{-1-\alpha}\I_{D_{3\e}}(x)+\e^{-\alpha}\I_{D \backslash D_{3\e}}(x)\right),\quad x\in D.
\end{equation}
\end{lemma}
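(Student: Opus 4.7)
The plan is to split the estimate into the interior regime $x\in D_{3\e}$ and the boundary layer $x\in D\setminus D_{3\e}$, since the two cases call for entirely different mechanisms.

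\textbf{Interior regime.} For $x\in D_{3\e}$ we have $\delta_D(x)>3\e$, so the ball $B(x,\delta_D(x)-2\e)$ is contained in $D_{2\e}$; by \eqref{e2-2}--\eqref{e2-1}, $\eta_\e\equiv 1$ on this ball and all derivatives of $\eta_\e$ vanish at $x$. In particular, $\bar\LL\eta_\e(x)$ is a convergent integral (no principal value needed) and
\[
\bar\LL\eta_\e(x)=\bar K\int_{\R^d}\bigl(\eta_\e(x+z)-1\bigr)\frac{dz}{|z|^{d+\alpha}},
\]
with integrand supported in $\{|z|\ge \delta_D(x)-2\e\}\cap\{x+z\notin D_{2\e}\}$ and uniformly bounded by $1$ in absolute value. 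I would then dyadically decompose the annuli $A_k=\{2^k(\delta_D(x)-2\e)\le |z|<2^{k+1}(\delta_D(x)-2\e)\}$ and estimate the Lebesgue measure of $A_k\cap\{x+z\notin D_{2\e}\}$ by flattening the boundary exactly as in the computation of $I_{2,k}$ inside the proof of Lemma \ref{l2-5}. Since the latter set is a piece of a tubular $2\e$-neighborhood of $\partial D$, the Lipschitz hypothesis on $D$ gives a measure bound of order $\e\bigl(2^{k+1}\delta_D(x)\bigr)^{d-1}$. Summing in $k$ yields a geometric series in $2^{-k(1+\alpha)}$, and produces the desired bound $C\e\,\delta_D(x)^{-1-\alpha}$ after noting $\delta_D(x)-2\e\ge \delta_D(x)/3$ on $D_{3\e}$.

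\textbf{Boundary layer.} For $x\in D\setminus D_{3\e}$ the function $\eta_\e$ need not be constant near $x$, so I would instead use the standard second-order Taylor-plus-tail decomposition of the fractional Laplacian, splitting at the scale $\e$. The symmetry of the kernel kills the first-order term, so
\[
\bar\LL\eta_\e(x)=\bar K\int_{|z|\le\e}\bigl(\eta_\e(x+z)-\eta_\e(x)-\nabla\eta_\e(x)\cdot z\bigr)\frac{dz}{|z|^{d+\alpha}}+\bar K\int_{|z|>\e}\bigl(\eta_\e(x+z)-\eta_\e(x)\bigr)\frac{dz}{|z|^{d+\alpha}}.
\]
The near-field term is controlled by $\|\nabla^2\eta_\e\|_\infty\int_{|z|\le\e}|z|^{2-d-\alpha}\,dz\lesssim \e^{-2}\cdot\e^{2-\alpha}=\e^{-\alpha}$ using \eqref{e2-1}, while the far-field term is estimated trivially by the $L^\infty$ bound $|\eta_\e(x+z)-\eta_\e(x)|\le 2$ and $\int_{|z|>\e}|z|^{-d-\alpha}\,dz\lesssim\e^{-\alpha}$. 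Together these give $|\bar\LL\eta_\e(x)|\le C\e^{-\alpha}$ on $D\setminus D_{3\e}$, completing \eqref{l2-6-1}.

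\textbf{Main obstacle.} The delicate part is the interior bound: a naive estimate using only $|1-\eta_\e|\le 1$ over $\{|z|>\delta_D(x)-2\e\}$ produces only $\delta_D(x)^{-\alpha}$, losing the crucial factor $\e/\delta_D(x)$. One must exploit the fact that the non-trivial contribution comes solely from the tubular neighborhood of $\partial D$ of thickness $2\e$, and this forces the Lipschitz flattening and dyadic decomposition described above. Once that measure estimate is in hand, the rest reduces to summing a convergent geometric series.
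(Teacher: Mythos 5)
Your proposal is correct and follows essentially the same route as the paper: for $x\in D_{3\varepsilon}$, use that $\eta_\varepsilon\equiv 1$ locally so the integrand is supported in the thin set $\{x+z\in D\setminus D_{2\varepsilon}\}\subset\{|z|\gtrsim\delta_D(x)\}$, then estimate the measure of each dyadic annulus intersected with this tubular layer by Lipschitz flattening (this is precisely what the paper invokes via its reference to the argument for \eqref{l2-2-7}, formalized as \eqref{l2-6-2}); and for $x\in D\setminus D_{3\varepsilon}$, the standard Taylor-plus-tail split at scale $\varepsilon$ together with \eqref{e2-1} gives $\varepsilon^{-\alpha}$. The only cosmetic difference is that you spell out the dyadic decomposition in-line, while the paper defers it to the earlier lemma; and your statement that $B(x,\delta_D(x)-2\varepsilon)\subset D_{2\varepsilon}$ quietly conflates $\delta_D$ with the regularizing distance $\tilde\delta_D$ used to define $D_r$, but since the two are comparable (and the paper is equally cavalier about this) the constants absorb the discrepancy.
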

\begin{proof}
When $x\in D_{3\e}$, by \eqref{e2-2}, we know that $\nabla \eta_\e(x)=0$ and
$$\eta_\e(x+z)-\eta_\e(x)\neq 0 \ {\rm only\ if}
\ x+z\in D\backslash D_{2\e}.$$
Because $x\in D_{3\e}$ and $x+z\in D\backslash D_{2\e}$ imply that $|z|>\delta_D(x)/3$,
\begin{align*}
|\bar \LL \eta_\e(x)|&=\left|\int_{\R^d}\left(\eta_\e(x+z)-\eta_\e(x)-\langle \nabla \eta_\e(x), z\rangle \right)\frac{\bar K}{|z|^{d+\alpha}}\,dz\right|\\
&\le 2\int_{\{|z|>\delta_D(x)/3,\delta_D(x+z)\le 2\e\}}\frac{1}{|z|^{d+\alpha}}\,dz\le c_1\e\delta_D(x)^{-1-\alpha},
\end{align*}
where in the inequality we have used
\begin{align}\label{l2-6-2}
\int_{\{|z|>\delta_D(x)/3,\delta_D(x+z)\le 2\e\}}\frac{1}{|z|^{d+\alpha}}\,dz\le c_2\e\delta_D(x)^{-1-\alpha}
\end{align}
that can be proved by following the argument for \eqref{l2-2-7}.

When $x\in D\backslash D_{3\e}$, it follows from \eqref{e2-1}  that
\begin{align*}
|\bar \LL \eta_\e(x)|&\le \|\nabla^2 \eta_\e\|_\infty\int_{\{|z|\le \e\}}\frac{|z|^2}{|z|^{d+\alpha}}\,dz
+2\int_{\{|z|>\e\}}\frac{1}{|z|^{d+\alpha}}\,dz\le c_3\e^{-\alpha}.
\end{align*}
This establishes \eqref{l2-6-1}.
\end{proof}

\begin{lemma}\label{l2-7}
 Suppose that $D$ is a bounded Lipschitz domain  in $\R^d$.
Then, for every $f\in C^1_{(-\alpha/2)}(D)$, there exists a constant $C_1>0$ such that for all $\varepsilon\in (0,1)$ and $x\in D$,
\begin{equation}\label{l2-7-1}
\begin{split}
&\left|\int_{\R^d}\left(f(x+z)-f(x)\right)
\left(\eta_\e(x+z)-\eta_\e(x)\right)\frac{1}{|z|^{d+\alpha}}\,dz\right|\\
&\le
C_1\left(\e\delta_D(x)^{-1-\alpha/2}\I_{D_{4\e}}(x)+\e^{-\alpha/2}\I_{D\backslash D_{4\e}}(x)\right).
\end{split}
\end{equation}
\end{lemma}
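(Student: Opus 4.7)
The plan is to split the estimate into two regimes based on whether $x \in D_{4\e}$ or $x \in D \setminus D_{4\e}$, treating the two cases by quite different methods that mirror the two qualitatively distinct terms on the right-hand side of \eqref{l2-7-1}.

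For $x \in D_{4\e}$, I would exploit that $\eta_\e \equiv 1$ in a ball $B(x, c\e)$ (by the Lipschitz continuity of $\wt\delta_D$), so in particular $\nabla \eta_\e(x) = 0$ and the integrand vanishes for $|z| \leq c\e$. Using $\eta_\e(x) = 1$, one can rewrite
\begin{align*}
I(x) &:= \int_{\R^d}(f(x+z) - f(x))(\eta_\e(x+z) - \eta_\e(x))\frac{dz}{|z|^{d+\alpha}} \\
&= -\int_{\R^d} f(x+z)(1 - \eta_\e(x+z))\frac{dz}{|z|^{d+\alpha}} + f(x)\int_{\R^d}(1 - \eta_\e(x+z))\frac{dz}{|z|^{d+\alpha}}.
\end{align*}
The second integral, which is absolutely convergent because $\eta_\e$ is locally constant at $x$, equals $-\bar\LL\eta_\e(x)/\bar K$ and is bounded by $C\e\delta_D(x)^{-1-\alpha}$ via Lemma \ref{l2-6}; combined with the pointwise bound $|f(x)| \le C\delta_D(x)^{\alpha/2}$ coming from $f \in C^1_{(-\alpha/2)}(D)$, this produces the target rate $\e\delta_D(x)^{-1-\alpha/2}$. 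For the first integral, $(1-\eta_\e(y))f(y)$ is supported on $D \setminus D_{2\e}$, where $|f(y)| \le C\e^{\alpha/2}$; a dyadic shell estimate (in the spirit of Lemma \ref{l2-5}) on the measure of $\{z : x+z \in D \setminus D_{2\e}, |z| \sim 2^k\delta_D(x)\}$, using the Lipschitz flattening of $\partial D$, gives $\int_{\{x+z \in D\setminus D_{2\e}\}}dz/|z|^{d+\alpha} \le C\e\delta_D(x)^{-1-\alpha}$, and the resulting bound $C\e^{1+\alpha/2}\delta_D(x)^{-1-\alpha}$ is absorbed into $C\e\delta_D(x)^{-1-\alpha/2}$ using $\e \le c\delta_D(x)$.

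For $x \in D \setminus D_{4\e}$, the strategy changes completely: I would first establish the global H\"older bound $|f(x+z) - f(x)| \le C|z|^{\alpha/2}$ valid for all $x, z \in \R^d$. This follows by combining the two $C^1_{(-\alpha/2)}$-seminorms: when $|z| \le \delta_D(x)/2$ with $x\in D$, integrate $|\nabla f|\le C\delta_D^{-1+\alpha/2}$ along the segment $[x, x+z]\subset D_{\delta_D(x)/2}$ and absorb the $\delta_D$-factor using $-1+\alpha/2 < 0$; in the remaining cases, use $|f(w)| \le C\delta_D(w)^{\alpha/2}$ pointwise together with $\delta_D(w) \le C|z|$ at the relevant points. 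Once this H\"older estimate is in hand, combining it with the elementary $|\eta_\e(x+z) - \eta_\e(x)| \le \min(1, c|z|/\e)$ and splitting the integral at $|z| = \e$ gives
\begin{equation*}
|I(x)| \le C\left( \e^{-1}\int_{\{|z|\le \e\}} \frac{|z|^{1+\alpha/2}}{|z|^{d+\alpha}}\,dz + \int_{\{|z|>\e\}} \frac{|z|^{\alpha/2}}{|z|^{d+\alpha}}\,dz \right) \le C\e^{-\alpha/2}.
\end{equation*}

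The main difficulty, and the conceptual heart of the proof, is the interior estimate: a direct bound on the contribution coming from $\{x+z \in D^c\}$ can only give $|f(x)|\int_{D^c - x} dz/|z|^{d+\alpha} \le C\delta_D(x)^{-\alpha/2}$, which exceeds the required $\e\delta_D(x)^{-1-\alpha/2}$ by the factor $\delta_D(x)/\e \gtrsim 1$. The decisive step is therefore to refuse to estimate this term in isolation; instead one regroups the contributions from $\{x+z \in D\setminus D_{2\e}\}$ and $\{x+z \in D^c\}$ into the single integral of $1 - \eta_\e$, which is then identified with $\bar\LL\eta_\e$ and controlled by Lemma \ref{l2-6}. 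This is exactly where the gain of order $\e$ is produced.
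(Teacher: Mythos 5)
Your treatment of the exterior case $x \in D \setminus D_{4\e}$ matches the paper's: both establish the global H\"older bound $|f(x+z)-f(x)| \le C|z|^{\alpha/2}$ (the paper's \eqref{l2-7-2}), split the integral at $|z| \sim \e$, and use $\|\nabla\eta_\e\|_\infty$ for small $|z|$ and $\|\eta_\e\|_\infty$ for large $|z|$. For the interior case your decomposition is genuinely different from the paper's: you isolate the constant-in-$z$ term $f(x)\int_{\R^d}(1-\eta_\e(x+z))|z|^{-d-\alpha}\,dz$ and identify it with $-f(x)\bar\LL\eta_\e(x)/\bar K$, whereas the paper asserts that the integrand vanishes unless $x+z\in D\setminus D_{2\e}$ and then applies the thin-shell estimate \eqref{l2-6-2} directly. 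Your estimate of the first integral, the one carrying $f(x+z)(1-\eta_\e(x+z))$ (supported on $\{x+z\in D\setminus D_{2\e}\}$ since $f$ vanishes on $D^c$), is correct.

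The gap is in the second integral, and in fact you have identified the right difficulty without resolving it. Since $\eta_\e\in C_c^\infty(D)$ vanishes on $D^c$, one has $1-\eta_\e(x+z)=1$ for every $z$ with $x+z\in D^c$, so $\int_{\R^d}(1-\eta_\e(x+z))|z|^{-d-\alpha}\,dz \ge \int_{\{x+z\in D^c\}}|z|^{-d-\alpha}\,dz$, which for $x$ in the interior of $D$ is bounded below by a positive constant uniformly in $\e$. You cite Lemma \ref{l2-6} to control this term, but Lemma \ref{l2-6} is proved by the very argument you are trying to avoid: its proof also claims $\eta_\e(x+z)-\eta_\e(x)\neq 0$ only for $x+z\in D\setminus D_{2\e}$, overlooking $x+z\in D^c$, and the asserted bound $|\bar\LL\eta_\e(x)|\le C_1\e\delta_D(x)^{-1-\alpha}$ cannot hold at interior $x$, since there $|\bar\LL\eta_\e(x)|=\bar K\int_{\R^d}(1-\eta_\e(x+z))|z|^{-d-\alpha}\,dz$ is bounded below by a positive constant independent of $\e$. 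The same example ($D$ a ball, $x$ its centre, any admissible $f$ with $f(x)\neq 0$) shows the right-hand side of \eqref{l2-7-1} tends to $0$ as $\e\to 0$ while the left-hand side does not, so the statement of Lemma \ref{l2-7} itself needs correction for interior $x$. What survives is the combination that is actually used in Lemma \ref{l2-8}(1): there one bounds $f(x)\bar\LL\eta_\e(x)$ plus the cross term of \eqref{l2-7-1}, which telescope to $\bar K\int_{\R^d} f(x+z)(\eta_\e(x+z)-\eta_\e(x))|z|^{-d-\alpha}\,dz$; because $f$ vanishes on $D^c$ the two problematic $D^c$ contributions cancel, and that combined quantity, which is precisely your first integral, does obey the claimed $O(\e\delta_D(x)^{-1-\alpha/2})$ bound. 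So the correct move is not to estimate your second integral via Lemma \ref{l2-6} (it is simply not small), but to carry the telescoped expression forward to the application.
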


\begin{proof}
Since $f\in C^1_{(-\alpha/2)}(D)\subset C^{\alpha/2}_{(-\alpha/2)}(D)$
(see e.g. \cite[Lemma 4.1]{ZZ} for
an
equivalent characterization of the space
$C^\beta_{(-\alpha/2)}(D)$)
and
$f(x)=0$ for every $x\in D^c$, it holds that
\begin{equation}\label{l2-7-2}
|f(x)-f(x+z)|\le c_1|z|^{\alpha/2},\quad x,z\in \R^d.
\end{equation}
and
\begin{equation}\label{l2-7-3}
|f(x)|\le c_1\delta_D(x)^{\alpha/2},\quad x\in D.
\end{equation}

When $x\in D_{4\e}$, due to \eqref{l2-7-3}
and the fact that
$$\eta_\e(x+z)-\eta_\e(x)
\  \hbox{ is possibly non-zero only if when }
\ x+z\in D\backslash D_{2\e},$$
we obtain
\begin{align*}
&\left|\int_{\R^d}\left(f(x+z)- f(x)\right)
\left(\eta_\e(x+z)-\eta_\e(x)\right)\frac{1}{|z|^{d+\alpha}}\,dz\right|\\
&\le c_2\int_{\{|z|>\delta_D(x)/3\}}
\left(\delta_D(x+z)^{\alpha/2}+\delta_D(x)^{\alpha/2}\right)\I_{\{x+z\in D\backslash D_{2\e}\}}\frac{1}{|z|^{d+\alpha}}\,dz\\
&\le c_3\left(\e^{\alpha/2}+\delta_D(x)^{\alpha/2}\right)\int_{\{|z|>\delta_D(x)/3,\delta_D(x+z)\le 2\e\}}\frac{1}{|z|^{d+\alpha}}\,dz
\le c_4\e\delta_D(x)^{-1-\alpha/2},
\end{align*}
where in the first inequality we used  the fact that $x\in D_{4\e}$ and $x+z\in D\backslash D_{2\e}$ imply $|z|>\delta_D(x)/3$, and
the last inequality follows from \eqref{l2-6-2}.

When $x\in D\backslash D_{4\e}$,
\begin{align*}
&\left|\int_{\R^d}\left(f(x+z)- f(x)\right)
\left(\eta_\e(x+z)-\eta_\e(x)\right)\frac{1}{|z|^{d+\alpha}}\,dz\right|\\
&\le \left[ \int_{\{|z|\le 5\e\}}+\int_{\{|z|>5\e\}}\right]\left|f(x+z)- f(x)\right|
\left|\eta_\e(x+z)-\eta_\e(x)\right|\frac{1}{|z|^{d+\alpha}}\,dz\\
&=:I_1^\e(x)+I_2^\e(x).
\end{align*}
Using \eqref{l2-7-2} and \eqref{e2-1}, we obtain
\begin{align*}
I_1^\e(x)&\le c_5 \|\nabla \eta_\e\|_\infty\int_{\{|z|\le 5\e\}}\frac{1}{|z|^{d+\alpha/2-1}}\,dz\le c_6\e^{-\alpha/2}
\end{align*} and
\begin{align*}
I_2^\e(x)&\le c_7\|\eta_\e\|_\infty\int_{\{|z|>5\e\}} \frac{1}{|z|^{d+\alpha/2}}\,dz\le c_8\e^{-\alpha/2}.
\end{align*}
Therefore, putting all the estimates above together yields \eqref{l2-7-1}.
\end{proof}

\subsection{Averaging estimates  related
to $\LL_\e f$}
Throughout this subsection,
without any mention we always assume that
$D$ is a bounded Lipschitz domain  in $\R^d$.
For any  $f\in C^{3}_{(-\alpha/2)}(D)$, set $f_\e(x):=f(x)\eta_\e(x)$, where
$\eta_\e\in C_c^\infty(D)$
is a cut-off function satisfying \eqref{e2-2} and \eqref{e2-1}.  By the definition, it holds for $f\in C^{3}_{(-\alpha/2)}(D)$ that
\begin{equation}\label{l2-2-3a}
\begin{split}
&|\nabla^k f(x)|\le  \|f\|_{3;D}^{(-\alpha/2)}\delta_D(x)^{-k+\alpha/2},\quad x\in D,\ k=0,1,2,3.
\end{split}
\end{equation}
Combining this with \eqref{e2-1} yields that
\begin{equation}\label{l2-2-3}
\begin{split}
|\nabla^k f_\e(x)|&\le c_0\delta_D(x)^{-k+\alpha/2}\I_{D_{\e}}(x)\\
&\le c_1\left(\delta_D(x)^{-k+\alpha/2}\I_{D_{3\e}}(x)+\e^{-k+\alpha/2}\I_{D\backslash D_{3\e}}(x)\right)
,\quad x\in D,\ k=0,1,2,3.
\end{split}
\end{equation}

For every $g\in
 C^2_b (\R^d)$,
define
\begin{equation}\label{l2-2-2}
\begin{split}
\hat \LL_{1,\e} g(x) :=&\int_{\R^d}\left(g(x+z)-g(x)-\left\langle \nabla g(x),z\right\rangle\right)\frac{K\left({x}/{\e},({x+z})/{\e}\right)}{|z|^{d+\alpha}}\,dz,\quad \alpha\in (1,2),\\
\hat \LL_{2,\e} g(x) :=&\int_{\R^d}\left(g(x+z)-g(x)-\left\langle \nabla g(x),z\right\rangle\I_{\{|z|\le 1\}}\right)\frac{K\left({x}/{\e},({x+z})/{\e}\right)}{|z|^{d+\alpha}}\,dz,\quad \alpha\in (0,1];\\
\end{split}
\end{equation} and
\begin{equation}\label{l2-2-2a}
\begin{split}
\bar \LL_{\e} g(x) :=&\begin{cases}\displaystyle\int_{\R^d}\left(g(x+z)-g(x)-\left\langle \nabla g(x),z\right\rangle \right)\frac{\bar K\left({x}/{\e}\right)}{|z|^{d+\alpha}}\,dz,\quad \alpha\in (1,2),\\
\displaystyle\int_{\R^d}\left(g(x+z)-g(x)-\left\langle \nabla g(x),z\right\rangle\I_{\{|z|\le 1\}} \right)\frac{\bar K\left({x}/{\e}\right)}{|z|^{d+\alpha}}\,dz,\quad \alpha\in (0,1],\end{cases}
\end{split}
\end{equation}
where $\bar K(x):=\displaystyle\int_{\T^d}K(x,y)\,dy$.
(Note that the constant $\bar K$ defined in \eqref{e:1.7a} is the value of $\int_{\T^d} \bar K(x) \,dx$.)

The purpose of this part is to present several
estimates concerning the averaging properties for $f\in C^{3}_{(-\alpha/2)}(D)$. For this, we need to take account
into the effect of blow up behaviours for $\nabla f$ and $\nabla^2 f$ near the boundary $\partial D$.

\begin{lemma}\label{l2-2}
Suppose that $\alpha\in (1,2)$. Then, for every $f\in C^{3}_{(-\alpha/2)}(D)$, there are constants $C_1,C_2>0$ such that for all $\e\in (0,1)$ and $x\in D$,
\begin{equation}\label{l2-2-1}
\begin{split}
 |\hat \LL_{1,\e} f_\e(x)-\bar \LL_{\e} f_\e(x)|
&\le C_1\left(\e^{2-\alpha}\delta_D(x)^{-2+\alpha/2}+\e^{\alpha/2}\delta_D(x)^{-\alpha}\right)\I_{D_{4\e}}(x)
+C_1\e^{-\alpha/2}\I_{D \backslash D_{4\e}}(x)
\end{split}
\end{equation} and
\begin{equation}\label{l2-2-1a}
\begin{split}
 |\hat \LL_{1,\e} \nabla f_\e(x)|
&\le C_2\left(\e^{1-\alpha}\delta_D(x)^{-2+\alpha/2}+\e^{-1+\alpha/2}\delta_D(x)^{-\alpha}\right)\I_{D_{4\e}}(x)
+C_2\e^{-1-\alpha/2}\I_{D \backslash  D_{4\e}}(x).
\end{split}
\end{equation}
Here $f_\e := f\eta_\e$.
 \end{lemma}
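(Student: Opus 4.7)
The plan is to decompose the integration region in $z$ according to the sizes of $|z|$ relative to $\e$ and (for interior points) $\delta_D(x)$, then combine a second-order Taylor expansion in the small-scale range with the periodic averaging of $K$ and the geometric integral estimates of Lemma \ref{l2-5} in the large-scale range.

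For \eqref{l2-2-1}, I would write
\begin{equation*}
\hat\LL_{1,\e}f_\e(x)-\bar\LL_\e f_\e(x) = \int_{\R^d} g(x,z)\,\frac{K(x/\e,(x+z)/\e)-\bar K(x/\e)}{|z|^{d+\alpha}}\,dz,
\end{equation*}
where $g(x,z):=f_\e(x+z)-f_\e(x)-\langle\nabla f_\e(x),z\rangle$, and split the region into $A:=\{|z|\le \e\}$ and $B:=\{|z|>\e\}$. On $A$, I would use the crude bound $|K-\bar K|\le 2\Lambda$ together with the Taylor estimate $|g(x,z)|\le c|z|^2\sup_{t\in[0,1]}|\nabla^2 f_\e(x+tz)|$ and the pointwise control \eqref{l2-2-3} for $k=2$: for $x\in D_{4\e}$ and $|z|\le \e$ we have $\delta_D(x+tz)\ge \delta_D(x)/2$, yielding $c\e^{2-\alpha}\delta_D(x)^{-2+\alpha/2}$, and for $x\in D\setminus D_{4\e}$ the cut-off bound gives $c\e^{-\alpha/2}$. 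On $B$, I would exploit the fact that $y\mapsto K(x/\e,y)-\bar K(x/\e)$ is mean zero on $\T^d$: a Poisson-type equation on $\T^d$ (analogous to Lemma \ref{l2-4}, or its Euclidean counterpart on the torus) produces a uniformly bounded periodic vector field $\vec\Phi$ so that the oscillating factor equals $\e\,\nabla_z\!\cdot\!\vec\Phi(x/\e,z/\e)$. Integration by parts in $z$ on $B$ then transfers one derivative onto $g(x,z)/|z|^{d+\alpha}$, producing an extra factor $\e$ plus a boundary contribution at $\{|z|=\e\}$, which is absorbed into the small-scale estimate. The resulting bulk integrand $\e|\nabla f_\e(x+z)-\nabla f_\e(x)|/|z|^{d+\alpha}+\e|g(x,z)|/|z|^{d+\alpha+1}$ is controlled by \eqref{l2-2-3} and Lemma \ref{l2-5} \eqref{l2-5-1}--\eqref{l2-5-2}, whose kernel $|z|^{-(d+\alpha-1)}$ matches naturally after one power of $|z|$ has been extracted from $g$; the geometric split of $B$ into an inner part $\{\e<|z|\le \delta_D(x)/4\}$ and an outer part $\{|z|>\delta_D(x)/4\}$ gives the two distinct $\delta_D(x)$-powers $\delta_D(x)^{-2+\alpha/2}$ and $\delta_D(x)^{-\alpha}$ in the final bound.

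For \eqref{l2-2-1a}, no averaging is required since only $\hat\LL_{1,\e}\nabla f_\e(x)$ is being estimated. I would split into an inner range (either $\{|z|\le \e\}$ for $x\in D\setminus D_{4\e}$, or $\{|z|\le \delta_D(x)/4\}$ for $x\in D_{4\e}$) and an outer range. On the inner range I would apply the second-order Taylor bound $|\nabla f_\e(x+z)-\nabla f_\e(x)-\langle\nabla^2 f_\e(x),z\rangle|\le c|z|^2\sup_t|\nabla^3 f_\e(x+tz)|$ combined with \eqref{l2-2-3} for $k=3$. On the outer range I would use the crude expansion $|\nabla f_\e(x+z)|+|\nabla f_\e(x)|+|z||\nabla^2 f_\e(x)|$ and invoke Lemma \ref{l2-5} \eqref{l2-5-5} to control the $|\nabla f_\e(x+z)|$ piece, together with \eqref{l2-2-3} for $k=1,2$ to control the pointwise-in-$x$ pieces; these collect into the claimed terms $\e^{1-\alpha}\delta_D(x)^{-2+\alpha/2}$, $\e^{-1+\alpha/2}\delta_D(x)^{-\alpha}$ and $\e^{-1-\alpha/2}$.

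The main obstacle is making the averaging step for \eqref{l2-2-1} quantitative with the correct dependence on both $\e$ and $\delta_D(x)$. After the integration by parts, the extra derivative falling on $g/|z|^{d+\alpha}$ and the post-IBP boundary term at $|z|=\e$ must be orchestrated so that their total contribution reduces exactly to the geometric integrals estimated in Lemma \ref{l2-5} \eqref{l2-5-1}--\eqref{l2-5-2}, and the bookkeeping between the Taylor-type estimate on the inner sub-range $\e<|z|\le \delta_D(x)/4$ (producing the $\e^{2-\alpha}\delta_D(x)^{-2+\alpha/2}$ term via $|\nabla^2 f_\e|$) and the pointwise bounds on the outer sub-range $|z|>\delta_D(x)/4$ (producing the $\e^{\alpha/2}\delta_D(x)^{-\alpha}$ term via the averaging factor $\e$ together with \eqref{l2-5-5}-type integration of $|\nabla f_\e(x+z)|$) must be carried out carefully to avoid losing the sharp interior rate.
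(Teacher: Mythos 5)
Your proposal is essentially correct, but the averaging step you propose is genuinely different from the paper's. The paper treats $I_3^\e(x):=\int_{\{|z|_1>2\e\}}H_\e(x,z)(K(x/\e,(x+z)/\e)-\bar K(x/\e))\,dz$ by a Riemann-sum device: it partitions $\{|z|_1>2\e\}$ into cubes $(y,y+\e e]$ with $y\in\e\Z^d$, replaces $H_\e(x,z)$ by $H_\e(x,y)$ on each cube, and uses the exact periodicity identity $\int_{(y,y+\e e]}K(x/\e,(x+z)/\e)\,dz=\bar K(x/\e)\,\e^d$ to kill the oscillatory average; the remaining error is a finite difference $H_\e(x,z)-H_\e(x,y)$, which by the mean-value theorem yields an extra $\e$ together with one more derivative on $f_\e$. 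You instead construct a periodic corrector $\vec\Phi$ via a Poisson equation on $\T^d$ so that $K(x/\e,(x+z)/\e)-\bar K(x/\e)=\e\,\nabla_z\cdot\vec\Phi(x/\e,z/\e)$, and then integrate by parts; the bulk and boundary terms give the same $\e$-gain and the same derivative transfer, so the residual integrals match those controlled by Lemma \ref{l2-5}(1). Both routes are valid, and each is the continuous/discrete twin of the other: the lattice-sum trick is the discrete analogue of your integration by parts and avoids both the construction of $\vec\Phi$ and the bookkeeping of boundary terms at $|z|=\e$ and $|z|\to\infty$, whereas your corrector approach is the more standard ``two-scale'' device and may generalize more transparently.

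Two small points to tighten. First, the corrector you need comes from the \emph{classical} Poisson equation $\Delta_w\phi(u,w)=K(u,u+w)-\bar K(u)$ on $\T^d$ (solvable since the right-hand side is mean-zero and $C^2$), not from the jump-operator Poisson equation of Lemma \ref{l2-4}; your parenthetical ``Euclidean counterpart on the torus'' is the correct reference, the analogy to Lemma \ref{l2-4} is a bit of a red herring. Second, for \eqref{l2-2-1a} you invoke \eqref{l2-5-5}, but that inequality is stated only for $\alpha\in(0,1]$; the paper instead uses the mean-value theorem on the outer range, writing $\nabla f_\e(x+z)-\nabla f_\e(x)=\int_0^1\langle\nabla^2 f_\e(x+sz),z\rangle\,ds$ (which is integrable against $|z|^{-d-\alpha}$ precisely because $\alpha>1$) and appeals to \eqref{l2-5-1}--\eqref{l2-5-2}. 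Your crude bound plus \eqref{l2-5-5} does give the right answer since that estimate extends trivially to $\alpha\in(1,2)$, but as written you are using the lemma outside its stated range; either extend \eqref{l2-5-5} explicitly or switch to the mean-value route.
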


\begin{proof} (1) Set
\begin{align*}
H_\e(x,z):=\left(f_\e(x+z)-f_\e(x)-\left\langle \nabla f_\e(x), z\right\rangle\right) |z|^{-(d+\alpha)}.
\end{align*}
For $z=(z_1,\cdots,z_d)\in \R^d$, let $|z|_1:=\max_{1\le i \le d}|z_i|$.
Write
\begin{align*}
\quad  |\hat \LL_{1,\e} f_\e(x)-\bar \LL_{\e} f_\e(x) |\le& \int_{\{|z|_1\le 2\e\}}|H_\e(x,z)|K\left(x/\e,(x+z)/{\e}\right)\,dz+
\int_{\{|z|_1\le 2\e\}}|H_\e(x,z)|\bar K\left(x/\e\right)\,dz\\
&+
\left|\int_{\{|z|_1>2\e\}}H_\e(x,z)\left(K\left(x/\e,(x+z)/{\e}\right)-\bar K\left(x/\e\right)\right)\,dz\right|\\
=&:I_1^\e(x)+I_2^\e(x)+I_3^\e(x).
\end{align*}

By the mean-value theorem and \eqref{l2-2-3}, we obtain
\begin{align*}
I_1^\e(x)+I_2^\e(x)&\le c_1\sup_{y:|y-x|_1\le 2\e}|\nabla^2 f_\e(y)|\cdot\left(\int_{\{|z|_1\le 2\e\}}\frac{|z|^2}{|z|^{d+\alpha}}\,dz\right)\\
&\le c_2\e^{2-\alpha}\left(\delta_D(x)^{-2+\alpha/2}\I_{D_{4\e}}(x)+\e^{-2+\alpha/2}
\I_{D\backslash D_{4\e}}(x)\right).
\end{align*}

On the other hand, set
\begin{align*}
&\int_{\{|z|_1>2\e\}}H_\e(x,z)\left(K\left(x/\e,(x+z)/{\e}\right)-\bar K\left(x/\e\right)\right)dz\\
&=\sum_{y\in \e\Z^d:|y|_1>2\e}\int_{(y,y+\e e]}\left(H_\e(x,z)-H_\e(x,y)\right)K\left(x/\e,(x+z)/{\e}\right)dz\\
&\quad +\sum_{y\in \e\Z^d:|y|_1>2\e}\int_{(y,y+\e e]}\left(H_\e(x,y)-H_\e(x,z)\right)\bar K\left(x/\e\right)dz,
\end{align*}
where $e=(1,\cdots,1)$, $(y,y+\e e]:=\prod_{i=1}^d(y_i,y_i+\e]$ and we
have used the following property
\begin{align*}
\int_{(y,y+\e e]}H_\e(x,y)K\left(x/\e,(x+z)/ \e\right)dz=
H_\e(x,y)\bar K\left(x/\e\right)\e^d
\quad y\in \e\Z^d\ {\rm with}\ |y|_1>2\e,
\end{align*}
thanks to the fact that $z\mapsto K(x,z)$ is $1$-periodic. Hence,
\begin{equation}\label{l2-2-4}
\begin{split}
I_3^\e(x)&\le \sum_{y\in \e\Z^d:|y|_1>2\e}\left|\int_{(y,y+\e e]}\left(H_\e(x,z)-H_\e(x,y)\right)K\left(x/\e,(x+z)/{\e}\right)\,dz\right|\\
&\quad +\sum_{y\in \e\Z^d:|y|_1>2\e}\left|\int_{(y,y+\e e]}\left(H_\e(x,y)-H_\e(x,z)\right)\bar K\left(x/\e\right)\,dz\right|\\
&\le c_3\sum_{y\in \e\Z^d:|y|_1>2\e}\int_{(y,y+\e e]}\left|H_\e(x,z)-H_\e(x,y)\right|dz.
\end{split}
\end{equation}

According to the mean-value theorem, for every $y\in \e\Z^d$ with $|y|_1>2\e$ and $z\in (y,y+\e e]$,
\begin{align*}
&|H_\e(x,z)-H_\e(x,y)|\\
&\le \left|f_\e(x+z)-f_\e(x)-\left\langle \nabla f_\e(x), z\right\rangle\right|\cdot \left|\frac{1}{|y|^{d+\alpha}}-
\frac{1}{|z|^{d+\alpha}}\right|\\
&\quad +\left|\left(f_\e(x+z)-f_\e(x)-\left\langle \nabla f_\e(x), z\right\rangle\right)-
\left(f_\e(x+y)-f_\e(x)-\left\langle \nabla f_\e(x), y\right\rangle\right)\right|\cdot \frac{1}{|y|^{d+\alpha}}\\
&\le c_4\e \left[ \left(\int_0^1\int_0^1 |\nabla^2 f_\e(x+stz)|\,ds\,dt\right)\cdot \frac{|z|^2}{|z|^{d+\alpha+1}}+
\left(\int_0^1\left|\nabla f_\e(x+sy+(1-s)z)-\nabla f_\e(x)\right|\,ds\right)\cdot \frac{1}{|z|^{d+\alpha}}\right]\\
&\le c_5\e\left[\left(\int_0^1\int_0^1 |\nabla^2 f_\e(x+stz)|\,ds\,dt+
\int_0^1\int_0^1|\nabla^2 f_\e\left(x+t(sy+(1-s)z)\right)|\,ds\,dt\right)\cdot \frac{1}{|z|^{d+\alpha-1}}\right]\\
&\le c_6\e\Bigg[\int_0^1\int_0^1\Big(\delta_D(x+stz)^{-2+\alpha/2}\I_{\{x+stz\in D_{3\e}\}}+
\delta_D(x+tz)^{-2+\alpha/2}\I_{\{x+tz\in D_{3\e}\}}\\
&\qquad\qquad \qquad\quad +\e^{-2+\alpha/2}\I_{\{x+stz\in D\backslash D_{3\e}\}}+\e^{-2+\alpha/2}\I_{\{x+tz\in D\backslash D_{3\e}\}}\Big)\,ds\,dt\Bigg]\cdot \frac{1}{|z|^{d+\alpha-1}},
\end{align*}
where in the last inequality we have used \eqref{l2-2-3} and the fact that $y\in \e\Z^d$ with $|y|_1>2\e$ and $z\in (y,y+\e e]$.
Thus,
\begin{align*}
& I_{3}^\e(x)\\
&\le c_7\e\sum_{y\in \e\Z^d:|y|_1>2\e}\int_0^1\int_0^1
\int_{(y,y+\e e]}\Big(\delta_D(x+stz)^{-2+\alpha/2}\I_{\{x+stz\in D_{3\e}\}}+
\delta_D(x+tz)^{-2+\alpha/2}\I_{\{x+tz\in D_{3\e}\}}\\
&\qquad\qquad \qquad\qquad\qquad \qquad\qquad \quad\,\,\,+\e^{-2+\alpha/2}\I_{\{x+stz\in D\backslash D_{3\e}\}}
+\e^{-2+\alpha/2}\I_{\{x+tz\in D\backslash D_{3\e}\}}\Big)\frac{1}{|z|^{d+\alpha-1}}\,dz\,ds\,dt\\
&\le c_8\e\int_0^1\int_0^1\int_{\{|z|_1>\e\}}\Big(\delta_D(x+stz)^{-2+\alpha/2}\I_{\{x+stz\in D_{3\e}\}}+\delta_D(x+tz)^{-2+\alpha/2}\I_{\{x+tz\in D_{3\e}\}}\\
&\qquad\qquad \qquad\qquad \qquad+\e^{-2+\alpha/2}\I_{\{x+stz\in D \backslash D_{3\e}\}}+\e^{-2+\alpha/2}\I_{\{x+tz\in D\backslash D_{3\e}\}}\Big)\frac{1}{|z|^{d+\alpha-1}}\,dz\,ds\,dt\\
&\le c_{9}\e\Big(\int_0^1\int_0^1 (st)^{\alpha-1}\int_{\{|z|_1>st \e\}}\left(\delta_D(x+z)^{-2+\alpha/2}\I_{\{x+z\in D_{3\e}\}}+\e^{-2+\alpha/2}\I_{\{x+z\in D\backslash D_{3\e}\}}\right)\frac{1}{|z|^{d+\alpha-1}}\,dz\,ds\,dt\\
&\qquad\quad\quad   + \int_0^1\int_0^1 t^{\alpha-1}\int_{\{|z|_1>t \e\}}\left(\delta_D(x+z)^{-2+\alpha/2}\I_{\{x+z\in D_{3\e}\}}+\e^{-2+\alpha/2}\I_{\{x+z\in D\backslash D_{3\e}\}}\right)\frac{1}{|z|^{d+\alpha-1}}\,dz\,ds\,dt\Big),
\end{align*}
where in the last inequality we have used
a
change of variables. Therefore, according to \eqref{l2-5-1} and \eqref{l2-5-2}, (noting that although here we use the norm $|z|_1$ in place of $|z|$, it is easy to verify that both \eqref{l2-5-1} and \eqref{l2-5-2} still hold since $|z|_1$ is equivalent with $|z|$), we obtain
\begin{align*}
I_{3}^\e(x)&\le c_{10}\left(\e^{2-\alpha}\delta_D(x)^{-2+\alpha/2}+\e^{\alpha/2}\delta_D(x)^{-\alpha}\right)\I_{D_{4\e}}(x)
+c_{10}\e^{-\alpha/2}\I_{D \backslash D_{4\e}}(x).
\end{align*}
So, putting all the estimates for $I_1^\e(x)$, $I_2^\e(x)$ and $I_3^\e(x)$ together,
we obtain
\eqref{l2-2-1}.

(2) Define
\begin{align*}
G_\e(x,z):=\left(\nabla f_\e(x+z)-\nabla f_\e(x)-\left\langle \nabla^2 f_\e(x), z\right\rangle\right)\frac{1}{|z|^{d+\alpha}}.
\end{align*}
By the mean-value theorem, we have
\begin{align*}
|\hat \LL_{1,\e} \nabla f_\e(x)|&\le \int_{\{|z|\le 2\e\}}|G_\e(x,z)|K\left(x/\e,(x+z)/{\e}\right)\,dz+
 \int_{\{|z|>2\e\}}|G_\e(x,z)|K\left(x/\e,(x+z)/{\e}\right)\,dz.\\
&=:J_1^\e(x)+J_2^\e(x).
\end{align*}
According to \eqref{l2-2-3},
\begin{align*}
J_1^\e(x)&\le c_{11}\left(\sup_{y:|y-x|\le 2\e}|\nabla^3 f_\e(y)|\right)\cdot\int_{\{|z|\le 2\e\}}\frac{|z|^2}{|z|^{d+\alpha}}\,dz\\
&\le c_{12}\e^{2-\alpha}\left(\delta_D(x)^{-3+\alpha/2}\I_{D_{4\e}}(x)+\e^{-3+\alpha/2}\I_{D\backslash D_{4\e}}(x)\right).
\end{align*}
On the other hand, by the mean-value theorem again and \eqref{l2-2-3}, we derive
\begin{align*}
J_2^\e(x)&\le c_{13}\int_0^1\int_{\{|z|>2\e\}}\left|\nabla^2 f_\e(x+sz)\right|\frac{1}{|z|^{d+\alpha-1}}\,dz\,ds+
c_{13}|\nabla^2 f_\e(x)|\cdot \int_{\{|z|>2\e\}}\frac{1}{|z|^{d+\alpha-1}}\,dz\\
&\le c_{14}\int_0^1 \int_{\{|z|>2\e\}}\left(\delta_D(x+sz)^{-2+\alpha/2}\I_{\{x+sz\in D_{3\e}\}}+\e^{-2+\alpha/2}\I_{\{x+sz\in D\backslash D_{3\e}\}} \right)\frac{1}{|z|^{d+\alpha-1}}\,dz\,ds\\
&\quad +
c_{14}\e^{1-\alpha}\delta_{D}(x)^{-2+\alpha/2}\I_{D_{4\e}}(x)+c_{16}\e^{-1-\alpha/2}\I_{D \backslash D_{4\e}}(x)\\
&\le c_{15}\int_0^1\int_0^1 s^{\alpha-1}\int_{\{|z|>s \e\}}\left(\delta_D(x+z)^{-2+\alpha/2}\I_{\{x+z\in D_{3\e}\}}+\e^{-2+\alpha/2}\I_{\{x+z\in D\backslash D_{3\e}\}}\right)\frac{1}{|z|^{d+\alpha-1}}\,dz\,ds\,dt\\
&\quad +c_{15}\e^{1-\alpha}\delta_{D}(x)^{-2+\alpha/2}\I_{D_{4\e}}(x)+c_{17}\e^{-1-\alpha/2}\I_{D \backslash D_{4\e}}(x).
\end{align*}
Then by \eqref{l2-5-1} and \eqref{l2-5-2}
we get
\begin{align*}
J_2^\e(x)\le c_{18}\left(\e^{1-\alpha}\delta_D(x)^{-2+\alpha/2}+\e^{-1+\alpha/2}\delta_D(x)^{-\alpha}\right)\I_{D_{4\e}}(x)
+c_{18}\e^{-1-\alpha/2}\I_{D \backslash D_{4\e}}(x).
\end{align*}

Putting
all the estimates above together,
we obtain
the second assertion \eqref{l2-2-1a}.
\end{proof}

\begin{lemma}\label{l2-3}
Suppose that $\alpha\in (0,1]$. Then, for every $f\in C^{3}_{(-\alpha/2)}(D)$, there are positive constants $C_1$,
$C_2$ and $C_3$
such that for all $\e\in (0,1)$ and $x\in D$,
\begin{equation}\label{l2-3-1}
\begin{split}
 \left|\hat \LL_{2,\e} f_\e(x)-\bar \LL_{\e} f_\e(x)\right|
&\le C_1\left(\e^{\alpha/2}
\delta_D(x)^{-\alpha}
+\e\delta_D(x)^{-3/2}\left(1+|\log \e|\right)\I_{\{\alpha=1\}}\right)\I_{D_{4\e}}(x)\\
&\quad +C_1\e^{-1+\alpha/2}\left(1+|\log \e|\I_{\{\alpha=1\}}\right)\I_{D \backslash D_{4\e}}(x).
\end{split}
\end{equation}
Here $f_\e = f \eta_\e$. Moreover,
when $\alpha=1$,
\begin{equation}\label{l2-3-1a}
\begin{split}
 \left|\hat \LL_{2,\e} \nabla f_\e(x)\right|
&\le C_2\left(\delta_D(x)^{-3/2}\left(1+|\log \e|\right) +\e^{-1/2}
\delta_D(x)^{-1}\right)
\I_{D_{4\e}}(x)\\
&\quad+C_2\e^{-3/2}(1+|\log \e|)\I_{D \backslash  D_{4\e}}(x),
\end{split}
\end{equation}
and,  when $\alpha\in (0,1)$, for every $\beta\in (\alpha/2,\alpha)$, it holds that
\begin{equation}\label{l2-3-2}
\begin{split}
 \left|\LL_{\e} \nabla f_\e(x)\right|
&\le C_3\left(\e^{-(\beta-\alpha/2)}\delta_D(x)^{-1-(\alpha-\beta)}+\e^{-(\alpha-\beta)}\delta_D(x)^{-1-\beta+\alpha/2}\right)\I_{D_{4\e}}(x)+
C_3\e^{-1-\alpha/2}\I_{D\backslash D_{4\e}}(x).
\end{split}
\end{equation}
\end{lemma}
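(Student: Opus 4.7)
The plan is to mimic the proof of Lemma \ref{l2-2}, adapting to the weaker tail decay of $|z|^{-d-\alpha}$ for $\alpha\le 1$ and to the truncated drift correction built into $\hat\LL_{2,\e}$. For \eqref{l2-3-1}, set $H_\e(x,z):=\bigl(f_\e(x+z)-f_\e(x)-\langle\nabla f_\e(x),z\rangle\I_{\{|z|\le 1\}}\bigr)|z|^{-(d+\alpha)}$ and write $\hat\LL_{2,\e}f_\e(x)-\bar\LL_\e f_\e(x)=\int H_\e(x,z)\bigl[K(x/\e,(x+z)/\e)-\bar K(x/\e)\bigr]\,dz$. Split the domain of integration into the three regions $\{|z|_1\le 2\e\}$, $\{2\e<|z|_1\le 1\}$, and $\{|z|_1>1\}$. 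On the inner region, use a second-order Taylor expansion together with \eqref{l2-2-3}, which contributes $O(\e^{2-\alpha})$ times the stated $\delta_D$ weight. On the other two regions, run the $\e\Z^d$ lattice discretization from the proof of \eqref{l2-2-1}: $1$-periodicity of $z\mapsto K(x/\e,(x+z)/\e)$ reduces the contribution on each cube $(y,y+\e e]$ to the oscillation of $H_\e(x,\cdot)$ there, and by the mean-value theorem combined with \eqref{l2-2-3} this oscillation is controlled by annulus integrals of $\delta_D(x+z)^{-2+\alpha/2}\,|z|^{-d-\alpha+1}$, to which Lemma \ref{l2-5}(2), namely \eqref{l2-5-3}--\eqref{l2-5-5}, applies. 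At $\alpha=1$ the $(1+|\log\e|)$ factors emerge from the critical integral $\int_{s\e<|z|\le 1}|z|^{-d}\,dz$.

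For \eqref{l2-3-1a} (only the $\alpha=1$ case), I apply the same three-region splitting directly to $\hat\LL_{2,\e}\nabla f_\e$: a second-order Taylor expansion of $\nabla f_\e$ with the third-derivative bound from \eqref{l2-2-3} on $\{|z|\le 2\e\}$, and on $\{|z|>2\e\}$ the mean-value theorem applied to $\nabla^2 f_\e$ combined with \eqref{l2-5-3}--\eqref{l2-5-5}. The $\delta_D^{-3/2}(1+|\log\e|)$ term on $D_{4\e}$ arises from the logarithmic factor in \eqref{l2-5-3} at $\alpha=1$, and the $\e^{-1/2}\delta_D^{-1}$ term from the $\e^{-1+\alpha/2}\delta_D^{-\alpha}$ piece of the same estimate.

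For \eqref{l2-3-2} with $\alpha\in(0,1)$ no averaging cancellation is required---this is a bound on the single operator $\LL_\e\nabla f_\e$, and since $\alpha<1$ the principal value is unambiguous. Fix $\beta\in(\alpha/2,\alpha)$ and split at $|z|=\e$. Interpolating the pointwise bounds on $|\nabla f_\e|$ and $|\nabla^2 f_\e|$ from \eqref{l2-2-3} yields the weighted H\"older estimate $|\nabla f_\e(x+z)-\nabla f_\e(x)|\le c|z|^\beta\delta_D(x)^{-1-\beta+\alpha/2}$ valid for $x\in D_{4\e}$ and $|z|$ small. The near part $\int_{|z|\le\e}|z|^{\beta-d-\alpha}\,dz=O(\e^{\beta-\alpha})$ then produces the $\e^{-(\alpha-\beta)}\delta_D^{-1-\beta+\alpha/2}$ contribution. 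For $|z|>\e$, bound $|\nabla f_\e(x+z)-\nabla f_\e(x)|$ by the sum $|\nabla f_\e(x+z)|+|\nabla f_\e(x)|$, insert \eqref{l2-2-3} on each term, and apply Lemma \ref{l2-5}(3), namely \eqref{l2-5-3a}--\eqref{l2-5-4a}, to obtain the $\e^{-(\beta-\alpha/2)}\delta_D^{-1-(\alpha-\beta)}$ contribution and, on $D\setminus D_{4\e}$, the stated $\e^{-1-\alpha/2}$ bound.

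The main obstacle will be the critical case $\alpha=1$ in \eqref{l2-3-1} and \eqref{l2-3-1a}: the logarithmic annulus integral $\int_{s\e<|z|\le 1}|z|^{-d}\,dz$ forces the $(1+|\log\e|)$ factor, which must be tracked carefully so that it does not combine with the boundary blow-ups $\delta_D^{-1}$ and $\delta_D^{-3/2}$ to produce extraneous powers of $\delta_D(x)$ beyond those stated. A secondary care is required in \eqref{l2-3-2} to ensure that the chosen $\beta$ lies strictly between $\alpha/2$ (needed for the far-field integrability in \eqref{l2-5-3a}) and $\alpha$ (needed for the near-zero integrability of $|z|^{\beta-d-\alpha}$); the range $(\alpha/2,\alpha)$ exactly exhausts both convergence requirements.
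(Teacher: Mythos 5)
Your overall plan is the same as the paper's: mimic Lemma~\ref{l2-2} via the $\e\Z^d$ lattice periodicity argument, control the cube-wise oscillation of $H_\e(x,\cdot)$ with the weighted derivative bounds \eqref{l2-2-3}, and feed the resulting annulus integrals into Lemma~\ref{l2-5}(2)--(3). Parts~\eqref{l2-3-1a} and~\eqref{l2-3-2} are handled essentially as in the paper (your interpolated H\"older bound on $\nabla f_\e$ for small $|z|$ in \eqref{l2-3-2} is a minor variant of the paper's use of the $\nabla^2 f_\e$ Lipschitz bound; both are dominated by the second term of the stated estimate on $D_{4\e}$).

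However, your three-region split of the integration domain at $|z|_1=1$ for \eqref{l2-3-1} hides a genuine technical point that you do not address. The lattice cubes $(y,y+\e e]$ with $y\in\e\Z^d$ do not respect the sphere $\{|z|=1\}$, yet the drift truncation $\I_{\{|z|\le 1\}}$ in $H_\e(x,z)$ is discontinuous there. When you apply the periodicity cancellation over a cube that straddles $\{|z|=1\}$, the oscillation $|H_\e(x,z)-H_\e(x,y)|$ picks up the full jump $|\langle\nabla f_\e(x),z\rangle|/|z|^{d+\alpha}$ rather than a Taylor-type increment, and the mean-value bound you propose does not apply on such cubes. The paper avoids this by keeping a single split at $|z|_1=2\e$ and explicitly tracking the indicator mismatch inside the oscillation estimate (introducing an auxiliary indicator $\I_{\{|z|\le 1-\e\}}$ and isolating a boundary-layer shell $\{1-\e\le|z|\le 1\}\cup\{1-(1+\sqrt d)\e\le|y|\le 1+(1+\sqrt d)\e\}$). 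This shell contributes, after summation, an extra term of order $\e\bigl(\delta_D(x)^{-1+\alpha/2}\I_{D_{3\e}}(x)+\e^{-1+\alpha/2}\I_{D\setminus D_{3\e}}(x)\bigr)$; it is ultimately dominated by the stated bound (using $\sup_D\delta_D<\infty$ and $\delta_D(x)>4\e$ on $D_{4\e}$), but it must be produced and then absorbed -- your write-up neither generates this term nor explains how the straddling cubes are handled. You should either adopt the paper's two-region split with explicit bookkeeping of the indicator jump, or carefully argue that the contribution from the $O(\e^{1-d})$ cubes meeting $\{|z|_1=1\}$ is of the required order.

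A smaller imprecision: on the far region $\{|z|_1>1\}$ the oscillation of $H_\e$ is governed by $|\nabla f_\e(x+\cdot)|$ (via \eqref{l2-5-5}), not by the $\delta_D(x+z)^{-2+\alpha/2}|z|^{-d-\alpha+1}$ integrand you quote, which pertains only to the intermediate annulus $\{2\e<|z|\le 1\}$. You do cite \eqref{l2-5-5}, so this is a matter of stating the right kernel for each region rather than a conceptual error.
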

\begin{proof}
(i) Set \begin{align*}
H_\e(x,z):=\left(f_\e(x+z)-f_\e(x)-\left\langle \nabla f_\e(x), z\right\rangle\I_{\{|z|\le 1\}}\right)\frac{1}{|z|^{d+\alpha}}.
\end{align*}
Then
\begin{align*}
  |\hat \LL_{2,\e} f_\e(x)-\bar \LL_{\e} f_\e(x) |\le& \int_{\{|z|_1\le 2\e\}}|H_\e(x,z)|K\left(x/\e,(x+z)/{\e}\right)\,dz+
\int_{\{|z|_1\le 2\e\}}|H_\e(x,z)|\bar K\left(x/\e\right)\,dz\\
&+
\left|\int_{\{|z|_1>2\e\}}H_\e(x,z)\left(K\left(x/\e,(x+z)/{\e}\right)-\bar K\left(x/\e\right)\right)\,dz\right|\\
=&:I_1^\e(x)+I_2^\e(x)+I_3^\e(x).
\end{align*}
According to the mean-value theorem and \eqref{l2-2-3}, we have
\begin{align*}
I_1^\e(x)+I_2^\e(x)&\le c_1\sup_{y:|y-x|\le 2\e}|\nabla^2 f_\e(y)|\cdot\left(\int_{\{|z|\le 2\e\}}\frac{|z|^2}{|z|^{d+\alpha}}\,dz\right)\\
&\le c_2\e^{2-\alpha}\left(\delta_D(x)^{-2+\alpha/2}\I_{D_{4\e}}(x)+\e^{-2+\alpha/2}
\I_{D\backslash D_{4\e}}(x)\right).
\end{align*}
On the other hand, following the same argument as that for \eqref{l2-2-4}, we obtain \begin{align*}
I_3^\e(x)
\le c_3\sum_{y\in \e\Z^d:|y|_1>2\e}\int_{(y,y+\e e]}\left|H_\e(x,y)-H_\e(x,z)\right|\,dz.
\end{align*}
Applying the mean-value theorem and \eqref{l2-2-3} again
(by
following the same argument as that in the proof of Lemma \ref{l2-2}),
we obtain that for every $x\in D$, $y\in \e\Z^d$ with $|y|_1>2\e$ and $z\in (y,y+\e e]$,
\begin{align*}
&|H_\e(x,z)-H_\e(x,y)|\\ &\le
\left|f_\e(x+z)-f_\e(x)-\langle \nabla f_\e(x), z\rangle \I_{\{|z|\le 1- \e \}}\right|\left|\frac{1}{|y|^{d+\alpha}}-\frac{1}{|z|^{d+\alpha}}\right|\\
&\qquad  +\left|\left(f_\e(x+z)-f_\e(x)-\langle \nabla f_\e(x), z\rangle\I_{\{|z|\le 1- \e\}}\right)-\left(f_\e(x+y)-f_\e(x)-
\langle \nabla f_\e(x), y\rangle\I_{\{|z|\le 1- \e\}}\right)\right|
\frac{1}{|y|^{d+\alpha}}\\
&\qquad +\frac{\left|\langle \nabla f_\e(x), z\rangle\right|}{|z|^{d+\alpha}}\I_{\{1-\e\le |z|\le1\}}+ \frac{\left|\langle\nabla f_\e(x), y\rangle\right|}{|y|^{d+\alpha}}\I_{\{1-(1+\sqrt{d})\e\le |y|\le 1+(1+\sqrt{d})\e\}}
\\&\le c_4\e\left(\int_0^1\int_0^1 |\nabla^2 f_\e(x+stz)|\,ds\,dt+
\int_0^1\int_0^1|\nabla^2 f_\e\left(x+t(sy+(1-s)z)\right)|\,ds\,dt\right)\cdot \frac{1}{|z|^{d+\alpha-1}}\I_{\{|z|\le 1\}}\\
&\quad +c_4\e\left(\int_0^1 |\nabla f_\e(x+sz)|\,ds+\int_0^1\left|\nabla f_\e\left(x+sy+(1-s)z\right)\right|\,ds\right)\cdot \frac{1}{|z|^{d+\alpha}}
\I_{\{|z|> 1\}}\\
&\quad +c_4|\nabla f_\e(x)|\left(\I_{\{1-\e\le |z|\le1\}}+\I_{\{1-(1+\sqrt{d})\e\le |y|\le 1+(1+\sqrt{d})\e\}}\right)\\
&\le c_5\e\bigg[\int_0^1\int_0^1\Big(\delta_D(x+stz)^{-2+\alpha/2}\I_{\{x+stz\in D_{3\e}\}}+
\delta_D(x+tz)^{-2+\alpha/2}\I_{\{x+tz\in D_{3\e}\}}
+\e^{-2+\alpha/2}\I_{\{x+stz\in D\backslash D_{3\e}\}}\\
&\qquad \quad+\e^{-2+\alpha/2}\I_{\{x+tz\in D\backslash D_{3\e}\}}\Big)\,ds\,dt\bigg]\cdot \frac{1}{|z|^{d+\alpha-1}}\I_{\{|z|\le 1\}}
+c_5\e\bigg[\int_0^1 \Big(\delta_D(x+sz)^{-1+\alpha/2}\I_{\{x+sz\in D_{3\e}\}}\\
&\qquad\quad +\e^{-1+\alpha/2}\I_{\{x+sz\in D\backslash D_{3\e}\}}+\delta_D(x+z)^{-1+\alpha/2}\I_{\{x+z\in D_{3\e}\}}
+\e^{-1+\alpha/2}\I_{\{x+z\in D\backslash D_{3\e}\}}\Big)\,ds\bigg]\cdot\frac{1}{|z|^{d+\alpha}}\I_{\{|z|>1\}}\\
&\quad +c_5\left(\delta_D(x)^{-1+\alpha/2}
\I_{D_{3\e}}(x)
+\e^{-1+\alpha/2}
\I_{D\backslash D_{3\e}}(x)
\right)
\left(\I_{\{1-\e\le |z|\le1\}}+\I_{\{1-(1+\sqrt{d})\e\le |y|\le 1+(1+\sqrt{d})\e\}}\right).
\end{align*}
This, along with the proof
of the estimate for $I_3^\e(x)$ in Lemma \ref{l2-2}
via a change of variables,
yields that
\begin{align*}
& I_{3}^\e(x)\\
&\le c_6\e\bigg[\int_0^1\int_0^1 (st)^{\alpha-1}\int_{\{st\e<|z|_1\le st\}}\left(\delta_D(x+z)^{-2+\alpha/2}\I_{\{x+z\in D_{3\e}\}}+\e^{-2+\alpha/2}\I_{\{x+z\in D\backslash D_{3\e}\}}\right)\frac{1}{|z|^{d+\alpha-1}}\,dz\,ds\,dt\\
&\qquad\quad+ \int_0^1\int_0^1 s^{\alpha-1}\int_{\{s\e<|z|_1\le s\}}\left(\delta_D(x+z)^{-2+\alpha/2}\I_{\{x+z\in D_{3\e}\}}+\e^{-2+\alpha/2}\I_{\{x+z\in D\backslash D_{3\e}\}}\right)\frac{1}{|z|^{d+\alpha-1}}\,dz\,ds\,dt\bigg]\\
&\quad +c_6\e\bigg[\int_0^1 s^{\alpha}\int_{\{|z|> s\}}\left(\delta_D(x+z)^{-1+\alpha/2}\I_{\{x+z\in D_{3\e}\}}+\e^{-1+\alpha/2}\I_{\{x+z\in D\backslash D_{3\e}\}}\right)\frac{1}{|z|^{d+\alpha}}\,dz\,ds\\
&\qquad\qquad +\int_{\{|z|>1\}}\left(\delta_D(x+z)^{-1+\alpha/2}\I_{\{x+z\in D_{3\e}\}}+\e^{-1+\alpha/2}\I_{\{x+z\in D\backslash D_{3\e}\}}\right)\frac{1}{|z|^{d+\alpha}}\,dz\bigg]\\
&\quad +c_6\e\left(\delta_D(x)^{-1+\alpha/2}\I_{D_{3\e}}(x)+\e^{-1+\alpha/2}\I_{D\backslash D_{3\e}}(x)\right).
\end{align*}

Now, by \eqref{l2-5-3}--\eqref{l2-5-5}, we have
\begin{align*}
I_{3}^\e(x)&\le c_{7}\left(\e(\delta_D(x)^{-1-\alpha/2}+\delta_D(x)^{-2+\alpha/2}(1+|\log \e|)\I_{\{\alpha=1\}})+\e^{\alpha/2}(\delta_D(x)^{-\alpha}+1)\right)\I_{D_{4\e}}(x)\\
&\quad +c_{7}\left(\e^{-1+\alpha/2}(1+|\log \e|\I_{\{\alpha=1\}})+\e^{\alpha/2}\right)\I_{D \backslash D_{4\e}}(x)\\
&\le c_{8}\left(\e^{\alpha/2}
\delta_D(x)^{-\alpha}
+\e\delta_D(x)^{-3/2}\left(1+|\log \e|\right)\I_{\{\alpha=1\}}\right)\I_{D_{4\e}}(x)\\
&\quad +c_{8}\e^{-1+\alpha/2}\left(1+|\log \e|\I_{\{\alpha=1\}}\right)\I_{D \backslash D_{4\e}}(x),
\end{align*}
where in the last inequality we used the fact that $\sup_{x\in D}\delta_D(x)<\infty$ since $D$ is bounded.
Putting all the estimates above for $I_1^\e(x)$, $I_2^\e(x)$, $I_3^\e(x)$ together, we arrive at
\begin{align*}
&|\hat \LL_{2,\e} f_\e(x)-\bar \LL_{\e} f_\e(x) |\\
&\le c_{9}\left(\e^{\alpha/2}
\delta_D(x)^{-\alpha}
+\e^{2-\alpha}\delta_D(x)^{-2+\alpha/2}+
\e\delta_D(x)^{-3/2}\left(1+|\log \e|\right)\I_{\{\alpha=1\}}\right)\I_{D_{4\e}}(x)\\
&\quad +c_{9}\left(\e^{-1+\alpha/2}\left(1+|\log \e|\I_{\{\alpha=1\}}\right)+\e^{-\alpha/2}\right)\I_{D \backslash D_{4\e}}(x)\\
&\le c_{10}\left(\e^{\alpha/2}
\delta_D(x)^{-\alpha}
+\e\delta_D(x)^{-3/2}\left(1+|\log \e|\right)\I_{\{\alpha=1\}}\right)\I_{D_{4\e}}(x)\\
&\quad +c_{10}\e^{-1+\alpha/2}\left(1+|\log \e|\I_{\{\alpha=1\}}\right)\I_{D \backslash D_{4\e}}(x).
\end{align*}
So \eqref{l2-3-1} is proved.

(ii) When $\alpha=1$, define
\begin{align*}
G_\e(x,z):=\left(\nabla f_\e(x+z)-\nabla f_\e(x)-\left\langle \nabla^2 f_\e(x), z\right\rangle\I_{\{|z|\le 1\}}\right)\frac{1}{|z|^{d+1}}.
\end{align*}
Then,
\begin{align*}
|\hat \LL_{2,\e} \nabla f_\e(x)|&\le \int_{\{|z|\le 2\e\}}|G_\e(x,z)|K\left(x/\e,(x+z)/{\e}\right)\,dz+
 \int_{\{|z|>2\e\}}|G_\e(x,z)|K\left(x/\e,(x+z)/{\e}\right)\,dz.\\
&=:J_1^\e(x)+J_2^\e(x).
\end{align*}
Thanks to  \eqref{l2-2-3}, we obtain
\begin{align*}
J_1^\e(x)&\le c_{11}\left(\sup_{y:|y-x|\le 2\e}|\nabla^3 f_\e(y)|\right)\cdot\int_{\{|z|\le 2\e\}}\frac{|z|^2}{|z|^{d+1}}\,dz\\
&\le c_{12}\e\left(\delta_D(x)^{-5/2}\I_{D_{4\e}}(x)+\e^{-5/2}\I_{D\backslash D_{4\e}}(x)\right).
\end{align*}
On the other hand, by the mean-value theorem again and \eqref{l2-2-3}, we derive
\begin{align*}
J_2^\e(x)&\le c_{13}\int_0^1\int_{\{2\e<|z|\le 1\}}\left|\nabla^2 f_\e(x+sz)\right|\frac{1}{|z|^{d}}\,dz\,ds+
c_{13}|\nabla^2 f_\e(x)|\cdot \int_{\{2\e<|z|\le 1\}}\frac{1}{|z|^{d}}\,dz\\
&\quad +c_{13}\int_{\{|z|>1\}}\left|\nabla f_\e(x+z)\right|\frac{1}{|z|^{d+1}}\,dz+c_{13}|\nabla f_\e(x)|\cdot \int_{\{|z|>1\}}\frac{1}{|z|^{d+1}}\,dz\\
&\le c_{14}\int_0^1 \int_{\{2\e<|z|\le 1\}}\left(\delta_D(x+sz)^{-3/2}\I_{\{x+sz\in D_{3\e}\}}+\e^{-3/2}\I_{\{x+sz\in D\backslash D_{3\e}\}} \right)\frac{1}{|z|^{d}}\,dz\,ds\\
&\quad +c_{14}\int_{\{|z|>1\}}\left(\delta_D(x+z)^{-1/2}\I_{\{x+z\in D_{3\e}\}}+\e^{-1/2}\I_{\{x+z\in D\backslash D_{3\e}\}}\right)
\frac{1}{|z|^{d+1}}\,dz\\
&\quad +
c_{14}\left(1+|\log \e|\right)\left(\delta_{D}(x)^{-3/2}\I_{D_{4\e}}(x)+\e^{-3/2}\I_{D \backslash D_{4\e}}(x)\right)\\
&\quad +
c_{14}\left(\delta_{D}(x)^{-1/2}\I_{D_{4\e}}(x)+\e^{-1/2}\I_{D \backslash D_{4\e}}(x)\right)\\
&\le c_{15}\int_0^1  \int_{\{2\e s<|z|\le s\}}\left(\delta_D(x+z)^{-3/2}\I_{\{x+z\in D_{3\e}\}}+\e^{-3/2}\I_{\{x+z\in D\backslash D_{3\e}\}} \right)\frac{1}{|z|^{d}}\,dz\,ds\\
&\quad +c_{15}\int_{\{|z|>1\}}\left(\delta_D(x+z)^{-1/2}\I_{\{x+z\in D_{3\e}\}}+\e^{-1/2}\I_{\{x+z\in D\backslash D_{3\e}\}}\right)
\frac{1}{|z|^{d+1}}dz\\
&\quad +
c_{15}\left(1+|\log \e|\right)\left(\delta_{D}(x)^{-3/2}\I_{D_{4\e}}(x)+\e^{-3/2}\I_{D \backslash D_{4\e}}(x)\right)\\
&\quad +c_{15}\left(\delta_{D}(x)^{-1/2}\I_{D_{4\e}}(x)+\e^{-1/2}\I_{D \backslash D_{4\e}}(x)\right).
\end{align*}
Hence,  applying \eqref{l2-5-3} -- \eqref{l2-5-5}, we
get
\begin{align*}
J_2^\e(x)&\le c_{16}\left(\delta_D(x)^{-3/2}\left(1+|\log \e|\right)+\e^{-1/2}
\delta_D(x)^{-1}
\right)\I_{D_{4\e}}(x)
 +c_{16}\e^{-3/2}\left(1+|\log \e|\right)\I_{D \backslash D_{4\e}}(x).
\end{align*}
Putting both estimates for $J_1(x)$ and $J_2(x)$ together, we obtain the desired conclusion \eqref{l2-3-1a}.

(iii) Note that, when $\alpha\in (0,1)$, $f\in C^3_{(-\alpha/2)}(D)\subset C^{1+\beta}_{(-\alpha/2)}(D)$ for every $\beta\in (\alpha/2,\alpha)$. By  \eqref{e2-1} and the definition of $f_\e$, it holds that for all $x,z\in \R^d$ with $|z|\ge 2\e$
\begin{equation}\label{l2-3-3}
\begin{split}
\frac{\left|\nabla f_\e(x)-\nabla f_\e(x+z)\right|}{|z|^\beta} &\le c_{17}\Big[\delta_D(x+z)^{-1-\beta+\alpha/2}\I_{\{x+z\in D_{2\e}\}}+
\delta_D(x)^{-1-\beta+\alpha/2}\I_{\{x\in D_{2\e}\}}\\
&\qquad\quad +
\e^{-1-\beta+\alpha/2}\left(\I_{\{x+z\in D\backslash D_{2\e}\}}+\I_{\{x\in D\backslash D_{2\e}\}}\right)\Big].
\end{split}
\end{equation}
Here we have also used the fact that
for every $|z|\ge 2\e$, $x+z\in D_{2\e}$ and $x\in D\backslash D_{2\e}$
\begin{align*}
\quad \left|\nabla f_\e(x+z)-\nabla f_\e(x)\right|&\le
|\nabla f_\e(x+z)|+|\nabla f_\e(x)|\\
&\le c_{18}\left(\delta_D(x+z)^{-1+\alpha/2}\I_{\{x+z\in D_{\e}\}}+\delta_D(x)^{-1+\alpha/2}\I_{\{x\in D_{\e}\}}\right)\cdot \left(\frac{|z|}{2\e}\right)^{\beta}\\
&\le c_{19}\e^{-1-\beta+\alpha/2}|z|^\beta.
\end{align*}

Now we write
\begin{align*}
|\LL_{\e} \nabla f_\e(x)| \le  & \, \int_{\{|z|\le 2\e\}}|\nabla f_\e(x+z)-\nabla f_\e(x)|\frac{K\left(x/\e,(x+z)/{\e}\right)}{|z|^{d+\alpha}}\,dz  \\
&  \, +  \int_{\{|z|>2\e\}}|\nabla f_\e(x+z)-\nabla f_\e(x)|\frac{K\left(x/\e,(x+z)/{\e}\right)}{|z|^{d+\alpha}}\,dz.\\
 =:& \, L_1^\e(x)+L_2^\e(x).
\end{align*}

It follows from \eqref{l2-2-3} that
\begin{align*}
L_1^\e(x)&\le c_{20}\left(\sup_{y:|y-x|\le 2\e}|\nabla^2 f_\e(y)|\right)\cdot\int_{\{|z|\le 2\e\}}\frac{|z|}{|z|^{d+\alpha}}\,dz\\
&\le c_{21}\e^{1-\alpha}\left(\delta_D(x)^{-2+\alpha/2}\I_{D_{4\e}}(x)+\e^{-2+\alpha/2}\I_{D\backslash D_{4\e}}(x)\right).
\end{align*}
On the other hand, according to \eqref{l2-3-3},
\begin{align*}
L_2^\e(x)&\le c_{22}\int_{\{|z|>2\e\}}\Big(\delta_D(x+z)^{-1-\beta+\alpha/2}\I_{\{x+z\in D_{3\e}\}}+
\e^{-1-\beta+\alpha/2}\I_{\{x+z\in D_{3\e}\}}\Big)\frac{1}{|z|^{d+\alpha-\beta}}\,dz\\
&\quad +c_{22}\left(\delta_D(x)^{-1-\beta+\alpha/2}\I_{D_{4\e}}(x)+\e^{-1-\beta+\alpha/2}\I_{D\backslash D_{4\e}}(x)\right)
\cdot\int_{\{|z|>2\e\}}\frac{1}{|z|^{d+\alpha-\beta}}\,dz\\
&\le c_{23}\left(\e^{-(\beta-\alpha/2)}\delta_D(x)^{-1-(\alpha-\beta)}+\e^{-(\alpha-\beta)}\delta_D(x)^{-1-\beta+\alpha/2}\right)\I_{D_{4\e}}(x)+
c_{23}\e^{-1-\alpha/2}\I_{D\backslash D_{4\e}}(x),
\end{align*} where in the last inequality we used \eqref{l2-5-3a}, \eqref{l2-5-4a}
and the fact $\beta\in (\alpha/2,\alpha)$.

Putting the estimates for
$L_1^\e(x)$ and $L_2^\e(x)$
together, we
obtain \eqref{l2-3-2}.
\end{proof}

\begin{lemma}\label{l2-8}
The following
statements hold.
\begin{itemize}
\item [(1)] For all $\alpha\in (0,2)$ and $f\in C^{3}_{(-\alpha/2)}(D)$, there is a constant $C_1>0$ such that for all $\e\in (0,1)$ and $x\in D$,
\begin{equation}\label{l2-8-1}
\begin{split}
\left|\bar \LL f_\e(x)-\eta_\e(x)\bar \LL f(x)\right|&\le C_1
\left(\e\delta_D(x)^{-1-\alpha/2}\I_{D_{4\e}}(x)+\e^{-\alpha/2}\I_{D\backslash D_{4\e}}(x)\right),
\end{split}
\end{equation}
where $f_\e := f \eta_\e$.

\item [(2)]
When $\alpha\in (1,2)$
and $f\in C^{4}_{(-\alpha/2)}(D)$,
there is a constant $C_2>0$ such that for all $\e\in (0,1)$, $x\in D$ and $k=1,2$,
\begin{equation}\label{l2-8-2}
\begin{split}
\left|\nabla^k \bar \LL f_\e(x)\right|&\le
C_2(\e^{1-\alpha}\delta_D(x)^{-k-1+\alpha/2}+
\e^{-k+\alpha/2}\delta_D(x)^{-\alpha})\I_{D_{4\e}}(x)
+C_2\e^{-k-\alpha/2}\I_{D \backslash D_{4\e}}(x).
\end{split}
\end{equation}
\item [(3)] When $\alpha\in (0,1]$
and $f\in C^{4}_{(-\alpha/2)}(D)$,
there is a constant $C_3>0$ such that for all $\e\in (0,1)$, $x\in D$ and $k=1,2$,
\begin{equation}\label{l2-8-3}
\begin{split}
\left|\nabla^k
\bar \LL f_\e(x)\right|
&\le
C_3\left(
\delta_D(x)^{-k-\alpha/2}
\I_{\{\alpha\in (0,1)\}}+
\delta_D(x)^{-k-\alpha/2}
\left(1+\left|\log \e \right|\right)\I_{\{\alpha=1\}}\right)\I_{D_{4\e}}(x)\\
&\quad +C_3\e^{-k+\alpha/2}
\delta_D(x)^{-\alpha}
\I_{D_{4\e}}(x)+
C_3\e^{-k-\alpha/2}\left(1+|\log \e|\I_{\{\alpha=1\}}\right)
\I_{D\backslash D_{4\e}}(x).
\end{split}
\end{equation}
\end{itemize}
\end{lemma}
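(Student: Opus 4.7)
The plan is to derive all three estimates by reducing $\bar\LL$ applied to products involving the cut-off $\eta_\e$ (or to derivatives of $f_\e$) to building blocks already controlled in Lemmas \ref{l2-6}, \ref{l2-7} and \ref{l2-5}.

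\emph{For part (1)} the key is the carr\'e-du-champ identity
\begin{equation*}
\bar\LL f_\e(x) - \eta_\e(x)\bar\LL f(x)
  = f(x)\bar\LL\eta_\e(x) + \bar K\int_{\R^d}\bigl(f(x+z)-f(x)\bigr)\bigl(\eta_\e(x+z)-\eta_\e(x)\bigr)\frac{dz}{|z|^{d+\alpha}},
\end{equation*}
which follows by expanding $f_\e(x\pm z)=f(x\pm z)\eta_\e(x\pm z)$ inside the symmetric representation of $\bar\LL$ (so no principal-value concern arises, even for $\alpha\in(0,1]$). The first term is bounded by combining Lemma \ref{l2-6} with the pointwise estimate $|f(x)|\le \|f\|_{3;D}^{(-\alpha/2)}\delta_D(x)^{\alpha/2}$: on $D_{3\e}$ the factor $\delta_D(x)^{\alpha/2}$ converts $\e\,\delta_D(x)^{-1-\alpha}$ into $c\,\e\,\delta_D(x)^{-1-\alpha/2}$, while on $D\setminus D_{3\e}$ the inequality $\delta_D(x)\le 3\e$ converts $\e^{-\alpha}\delta_D(x)^{\alpha/2}$ into $c\,\e^{-\alpha/2}$. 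The second term is (up to the constant $\bar K$) precisely the quantity bounded in Lemma \ref{l2-7}. Adding the two estimates yields \eqref{l2-8-1}.

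\emph{For parts (2) and (3)}, since $\bar K$ is constant the operator $\bar\LL$ is translation invariant, hence $\nabla^k \bar\LL f_\e = \bar\LL\nabla^k f_\e$ for $k=1,2$; the interchange is legitimate because the hypothesis $f\in C^4_{(-\alpha/2)}(D)$ ensures $f_\e\in C^4_c(\R^d)$. Setting $g:=\nabla^k f_\e$ I would then mimic the near/far decompositions used in Lemma \ref{l2-2}(2) and Lemma \ref{l2-3}(ii)--(iii), now for the simpler constant-coefficient operator. The near region $\{|z|\le 2\e\}$ is controlled via a second-order Taylor expansion, producing $c\,\e^{2-\alpha}\sup_{|y-x|\le 2\e}|\nabla^{k+2}f_\e(y)|$; inserting \eqref{l2-2-3} (applicable because $k+2\le 4$) together with $\e\le\delta_D(x)/4$ on $D_{4\e}$ absorbs one factor of $\delta_D(x)^{-1}$ and yields the first term of \eqref{l2-8-2} and of \eqref{l2-8-3}. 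The far region $\{|z|>2\e\}$ is handled by the mean-value theorem followed by the change of variables $w=sz$, reducing matters to
\begin{equation*}
\int_0^1 s^{\alpha-1}\!\int_{|w|>2s\e}|\nabla^{k+1}f_\e(x+w)|\,\frac{dw}{|w|^{d+\alpha-1}}\,ds
\end{equation*}
plus an endpoint contribution $c\,\e^{1-\alpha}|\nabla^{k+1}f_\e(x)|$ when $\alpha\in(1,2)$, which is replaced by a $c(1+|\log\e|)|\nabla^{k+1}f_\e(x)|$ contribution from the shell $\{2\e<|z|\le 1\}$ when $\alpha=1$, and is absent for $\alpha\in(0,1)$ (where no drift correction is needed). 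Combining \eqref{l2-2-3} applied to $\nabla^{k+1}f_\e$ with the distance-function averages in Lemma \ref{l2-5}, adapted to the exponent $-k-1+\alpha/2$ (the dyadic-shell argument is unchanged), then delivers the remaining terms of \eqref{l2-8-2} and \eqref{l2-8-3}.

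The main obstacle is expected to be part (3) with $k=2$: the strong blow up $|\nabla^4 f_\e|\lesssim \delta_D^{-4+\alpha/2}$ inside $D_{4\e}$ must be balanced against the non-local tail via a dyadic decomposition analogous to the one behind \eqref{l2-5-6a}, now carried out at exponent $-3+\alpha/2$, and re-summed carefully so as to recover the announced $\e^{-k+\alpha/2}\delta_D(x)^{-\alpha}$ factor in \eqref{l2-8-2} and the $\delta_D(x)^{-k-\alpha/2}$ factor in \eqref{l2-8-3}. The critical case $\alpha=1$ additionally requires separate tracking of the intermediate shell $\{2\e<|z|\le 1\}$; once the appropriate analog of Lemma \ref{l2-5} is in place the $(1+|\log\e|)$ correction arises automatically from $\int_{2\e}^1 r^{-1}\,dr$.
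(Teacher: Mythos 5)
Your proposal follows essentially the same route as the paper's proof: for part (1) the product-rule identity for $\bar\LL(f\eta_\e)$ together with Lemma \ref{l2-6}, the pointwise bound $|f|\lesssim\delta_D^{\alpha/2}$, and Lemma \ref{l2-7}; for parts (2) and (3) the commutation $\nabla^k\bar\LL f_\e=\bar\LL(\nabla^k f_\e)$, a near/far (and, for $\alpha=1$, a three-shell) decomposition, the mean-value theorem with a change of variables, and the distance-function averages of Lemma \ref{l2-5}, with the $k=2$ and remaining $\alpha$ cases handled analogously (exactly as the paper also leaves them). The one minor inaccuracy is the ``endpoint contribution $c\,\e^{1-\alpha}|\nabla^{k+1}f_\e(x)|$'' you posit for $\alpha\in(1,2)$: since $\bar\LL$ has a rotationally symmetric kernel, the gradient term integrates to zero over $\{|z|>2\e\}$ and no such term actually arises (it does arise in the non-symmetric $\hat\LL_{1,\e}$ of Lemma \ref{l2-2}); it is in any case dominated by the terms you already have, so the conclusion is unaffected.
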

\begin{proof}
(1) By some direct computations, we have
\begin{equation}\label{t2-1-5}
\begin{split}
\bar \LL  f_\e(x)&=\eta_\e(x)\bar \LL f (x)+f(x)\bar \LL \eta_\e(x)+
\int_{\R^d}\left(f(x+z)-f(x)\right)\left(\eta_\e(x+z)-\eta_\e(x)\right)\frac{\bar K}{|z|^{d+\alpha}}\,dz\\
&=\eta_\e(x)\bar \LL f (x)+f(x)\bar \LL \eta_\e(x)+
\int_{\R^d}\left(f(x+z)-f(x)\right)\left(\eta_\e(x+z)-\eta_\e(x)\right)\frac{\bar K}{|z|^{d+\alpha}}\,dz\\
&=:\eta_\e(x)\bar \LL f (x)+I_1^\e(x).
\end{split}
\end{equation}
Combining \eqref{l2-6-1}, \eqref{l2-7-1} with the fact that $|f(x)|\le c_1\delta_D(x)^{\alpha/2}$, we can see
\begin{align*}
|I_1^\e(x)|&\le c_{2}\left(\e\delta_D(x)^{-1-\alpha/2}\I_{D_{4\e}}(x)+\e^{-\alpha/2}\I_{D\backslash D_{4\e}}(x)\right).
\end{align*}
With this we can verify \eqref{l2-8-1} immediately.

\smallskip

(2) For simplicity
we only give a proof for \eqref{l2-8-2} when $k=1$. The $k=2$ case
can be shown
 analogously.
Suppose that $\alpha\in (1,2)$. Then, by \eqref{l2-2-3}, the mean-value theorem and
a
change of variable as well as \eqref{l2-5-1}, it holds
\begin{align*}
&\quad |\nabla \bar \LL f_\e(x)|=|\bar \LL (\nabla f_\e)(x)|\\
&\le \int_{\{|z|\le 2\e\}}\left|\nabla f_\e(x+z)-\nabla f_\e(x)-\left\langle \nabla^2 f_\e(x), z\right\rangle\right|\frac{\bar K}{|z|^{d+\alpha}}dz
+\int_{\{|z|>2\e\}}\left|\nabla f_\e(x+z)-\nabla f_\e(x)\right|\frac{\bar K}{|z|^{d+\alpha}}dz\\
&\le c_{3}\sup_{y\in \R^d:|y-x|\le 2\e}|\nabla^3  f_\e(y)|\cdot\int_{\{|z|\le 2\e\}}\frac{|z|^2}{|z|^{d+\alpha}}\,dz
+c_{3}\int_0^1\int_{\{|z|>2\e\}}|\nabla^2 f_\e(x+sz)|\frac{1}{|z|^{d+\alpha-1}}\,dz\,ds\\
&\le
c_{4}\e^{2-\alpha}\delta_D(x)^{-3+\alpha/2}\I_{D_{4\e}}(x)
+c_{4}\e^{-1-\alpha/2}\I_{D \backslash  D_{4\e}}(x)\\
&\quad +c_{4}\int_0^1s^{1-\alpha}\int_{\{|z|>2s\e\}}\left(\delta_D(x+z)^{-2+\alpha/2}\I_{D_{3\e}}(x+z)+\e^{-2+\alpha/2}\I_{D \backslash D_{3\e}}(x+z)\right)\frac{1}{|z|^{d+\alpha-1}}\,dz\,ds\\
&\le c_{5}\left(\e^{1-\alpha}\delta_D(x)^{-2+\alpha/2}+\e^{-1+\alpha/2}\delta_D(x)^{-\alpha}\right)
\I_{D_{4\e}}(x)
+c_{5}\e^{-1-\alpha/2}\I_{D \backslash  D_{4\e}}(x).
\end{align*}
Hence, \eqref{l2-8-2} is established.

\smallskip

(3) Here we only prove \eqref{l2-8-3} for $k=1$ and $\alpha=1$, since other cases can be proved
by
the same way.
By \eqref{l2-2-3} and
a
change of variable, it holds
\begin{align*}
&\quad |\nabla \bar \LL f_\e(x)|=|\bar \LL (\nabla f_\e)(x)|\\
&\le \int_{\{|z|\le 2\e\}}\left|\nabla f_\e(x+z)-\nabla f_\e(x)-\left\langle \nabla^2 f_\e(x), z\right\rangle\right|\frac{\bar K}{|z|^{d+1}}dz
+\int_{\{2\e<|z|\le 1\}}\left|\nabla f_\e(x+z)-\nabla f_\e(x)\right|\frac{\bar K}{|z|^{d+1}}dz\\
&+\int_{\{|z|>1\}}\left(|\nabla f_\e(x+z)|+|\nabla f_\e(x)|\right)\frac{\bar K}{|z|^{d+1}}dz\\
&\le c_{6}\sup_{y\in \R^d:|y-x|\le 2\e}|\nabla^3  f_\e(y)|\cdot\int_{\{|z|\le 2\e\}}\frac{|z|^2}{|z|^{d+1}}\,dz\\
&\quad +c_{6}\int_0^1\int_{\{2\e<|z|\le 1\}}|\nabla^2 f_\e(x+sz)|\frac{1}{|z|^{d}}\,dz\,ds+c_6\int_{\{|z|>1\}}|\nabla f_\e(x+z)|\frac{1}{|z|^{d+1}}\,dz+c_6|\nabla f_\e(x)|\\
&\le c_{7}\e \delta_D(x)^{-5/2}\I_{D_{4\e}}(x)+c_{7}\e^{-3/2}\I_{D \backslash  D_{4\e}}(x)\\
&\quad +c_{7}\int_0^1\int_{\{2s\e<|z|\le 1\}}\left(\delta_D(x+z)^{-3/2}\I_{D_{3\e}}(x+z)+\e^{-3/2}\I_{D \backslash D_{3\e}}(x+z)\right)\frac{1}{|z|^{d}}\,dz\,ds\\
&\quad +c_{7} \int_{\{|z|>1\}}\left(\delta_D(x+z)^{-1/2}\I_{D_{3\e}}(x+z)+\e^{-1/2}\I_{D \backslash D_{3\e}}(x+z)\right)\frac{1}{|z|^{d}}\,dz\\
&\quad+ c_7\left(\delta_D(x)^{-1/2}\I_{D_{3\e}}(x)+\e^{-1/2}\I_{D \backslash D_{3\e}}(x)\right)\\
&\le c_{8}\left(\e^{-1/2}
\delta_D(x)^{-1}
+(1+|\log \e|)\delta_D(x)^{-3/2}\right)
\I_{D_{4\e}}(x)
+c_{8}\e^{-3/2}(1+|\log \e|)\I_{D \backslash  D_{4\e}}(x),
\end{align*}
where the
last
step follows from \eqref{l2-5-3} and (the proof of) \eqref{l2-5-4}.
This completes the proof.
\end{proof}

\section{Quantitative homogenizations }\label{section3}

This section is devoted to
the proofs of the main results of this paper, Theorems  \ref{t2-1} and  \ref{t2-2}.

When $\alpha\in (1,2)$, we define
$$ F(x)= (F^{(1)}(x),\cdots,F^{(d)} ),\quad   F^{(i)} (x)
:= {\rm p.v.} \int_{\R^d}
z_i
\frac{K(x,x+z)}{|z|^{d+\alpha}}\,dz, \quad i=1,\cdots, d,
$$
where $z_i$ is the $i$th coordinate of $z\in \R^d$.
By the symmetry    and the  multivariate 1-periodicity of   $K(x, y)$   on $\R^d\times \R^d$, as well as a change of variable, we have
\begin{align*}
\int_{\T^d}  F^{(i)} (x)   \,dx
&= \int_{\T^d} \left({\rm p.v.} \int_{\R^d}z_i\frac{K(x,x+z)}{|z|^{d+\alpha}}\,dz\right) dx
={\rm p.v.} \int_{\R^d}   z_i \left(\int_{\T^d}    \frac{K(x+z,x)}{|z|^{d+\alpha}}\,dx\right) dz \\
&=  {\rm p.v.}\int_{\R^d} z_i   \left(\int_{\T^d} \frac{K(x ,x-z)}{|z|^{d+\alpha}}\,dx\right) dz
= - \int_{\T^d} \left( {\rm p.v.} \int_{\R^d}z_i\frac{K(x,x+z)}{|z|^{d+\alpha}}\,dz\right) dx\\
&= -\int_{\T^d}  F^{(i)} (x)   \,dx .
\end{align*}
Thus $\displaystyle\int_{\T^d}  F^{(i)} (x)   \,dx=0$. On the other hand, since $K\in
C^2(\T^d\times \T^d)$, for
every $x\in \T^d$ and $z\in \R^d$ it holds that
\begin{align*}
 \left|z\frac{K(x,x+z)-K(x,x-z)}{|z|^{d+\alpha}}\right|
&\le
2\left(\sup_{x,y\in \T^d}|\nabla_y K(x,y)| \right)\frac{1}{|z|^{d+\alpha-2}}\I_{\{|z|\le 1\}}
\\&\quad+\left(\sup_{x,y\in \T^d}|K(x,y)|\right)\frac{1}{|z|^{d+\alpha-1}}\I_{\{|z|>1\}}.
\end{align*}
This implies that
\begin{align*}
F(x)=\lim_{\e\downarrow 0}\frac{1}{2}\int_{\{|z|>\e\}}z \, \frac{K(x,x+z)-K(x,x-z)}{|z|^{d+\alpha}}\,dz
\end{align*}
is well defined, and
that $F\in C(\T^d)$ by the dominated convergence theorem.

Similarly, for $\alpha\in (0,1]$, define
\begin{align*}
 F_\e(x)= (F_\e^{(1)}(x), \cdots, F_\e^{(d)} ),\quad  F_\e^{(i)} (x)
:= {\rm p.v.} \int_{\{|z|\le {1}/{\e}\}}z_i\frac{K(x,x+z)}{|z|^{d+\alpha}}\,dz, \quad i=1,\cdots, d.
\end{align*}
We can verify
in a similar way as above that,
for all $\alpha\in (0,1]$, $\displaystyle\int_{\T^d}  F_\e^{(i)} (x)   \,dx=0$ and  $F_\e\in C(\T^d)$ so that $$\|F_\e\|_\infty\le c_1( 1+\e^{\alpha-1}\I_{\{\alpha\in (0,1)\}}+|\log \e| \I_{\{\alpha=1\}}).$$
By
Lemma \ref{l2-4}, there exist $\psi\in \D_{\T^d}(\LL)$,
 $\phi =(\phi^{(1)}, \dots, \phi^{(d)})\in C(\T^d;\R^d)$
  with $\phi^{(i)}\in \D_{\T^d}(\LL)$, and
 $\phi_\e =(\phi_\e^{(1)}, \dots, \phi_\e^{(d)})$  with $\phi_\e^{(i)}\in \D_{\T^d}(\LL)$ so that
\begin{equation}\label{t2-1-3}
\begin{split}
 \sL\psi(x) =-(\bar K(x)-\bar K)  &\qquad  \hbox{if } \alpha\in (0,2),\\
 \sL\phi(x) =-F(x)  &\qquad  \hbox{if } \alpha\in (1,2),\\
 \sL\phi_\e(x) =-F_\e(x) &\qquad  \hbox{if } \alpha\in (0,1],
\end{split}
\end{equation}
which satisfy that
\begin{align*}
\|\psi\|_\infty+\|\phi\|_\infty+ \|\nabla \psi\|_\infty+\|\nabla \phi\|_\infty\le c_2 &\quad \hbox{ if } \alpha\in (1,2),\\
\|\psi\|_\infty\le c_2&\quad\hbox{ if }  \alpha\in (0,1]
\end{align*}
and
\begin{equation}\label{t2-1-3a}
\|\phi_\e\|_\infty \le c_2\left(1+\e^{-1+\alpha}\I_{\{\alpha\in (0,1)\}}+ |\log \e| \I_{\{\alpha=1\}}\right).
\end{equation}
Moreover, when $\alpha=1$,
there is
a constant $\theta\in (0,1)$ so that for all $x,y\in \R^d$,
\begin{equation}\label{t2-1-4}
\begin{split}
|\psi(x)-\psi(y)|\le& c_3|x-y|^\theta,\\
|\phi_\e(x)-\phi_\e(y)|\le &c_3\left(1+|\log \e|\right)|x-y|^\theta.
\end{split}
\end{equation}

\medskip

\begin{proof}[Proof of Theorem $\ref{t2-1}$]
Throughout the proof, without
any further mention,
all the constants $c_i$ are independent of $\e$. Recall that, by assumption,
$\bar u\in C^4_{(-\alpha/2)}(D)$.
We divide the proof into three cases.

{\bf Case 1: $\alpha\in (1,2).$}\quad Define
\begin{equation}\label{e:3.4}
v_\e(x):=\bar u_\e(x)+\e\left\langle \phi\left(x/\e\right),\nabla\bar u_\e(x)\right\rangle
+\e^\alpha \bar K^{-1}\psi\left(x/\e\right)\eta_\e(x)\bar \LL\bar u_\e(x),\quad x\in D.
\end{equation}
Here, $\bar u_\e(x):=\bar u(x)\eta_\e(x)$ with $\eta_\e$ being the cut-off function satisfying \eqref{e2-2}--\eqref{e2-1}, and
$\phi$ and $\psi$ are those functions given in \eqref{t2-1-3}.

By the definition \eqref{e:1.6}, \eqref{l2-2-2} and \eqref{l2-2-2a} for $\sL_\e$,$\hat \LL_{1,\e}$ and $\bar \LL_{\e}$ respectively,
\begin{align*}
\LL_\e \bar u_\e(x)&=\hat \LL_{1,\e} \bar u_\e(x)
+\e^{1-\alpha}\left\langle F\left(x/\e\right), \nabla \bar u_\e(x)\right\rangle=\bar \LL_{\e} \bar u_\e(x)+\e^{1-\alpha}\left\langle F\left(x/\e\right), \nabla \bar u_\e(x)\right\rangle+I_1^\e(x),
\end{align*}
where by \eqref{l2-2-1},
$I_1^\e$ satisfies that
\begin{align*}
|I_1^\e(x)|\le  c_1\left(\e^{2-\alpha}\delta_D(x)^{-2+\alpha/2}+\e^{\alpha/2}\delta_D(x)^{-\alpha}\right)\I_{D_{4\e}}(x)
+c_1\e^{-\alpha/2}\I_{D\backslash D_{4\e}}(x).
\end{align*}
In particular,
\begin{align*}
\int_D |I_1^\e(x)|\,dx\le c_2\e^{1-\alpha/2}.
\end{align*}

It holds that
\begin{align*}
 &\quad \LL_\e\left(\e\left\langle \phi\left(\eps^{-1} \cdot \right),\nabla \bar u_\e(\cdot)\right\rangle \right)(x) \\
&=\e\left\langle \LL_\e\phi\left(\eps^{-1} \cdot \right)(x),\nabla \bar u_\e(x)\right\rangle+\e
\left\langle\phi\left(x/\e\right), \LL_\e\left(\nabla \bar u_\e \right)(x) \right\rangle\\
&\quad + \e\int_{\R^d}\left\langle \phi\left((x+z)/{\e}\right)-
\phi\left(x/\e\right), \nabla\bar u_\e(x+z)-\nabla \bar u_\e(x)\right\rangle\frac{K\left(x/\e,(x+z)/{\e}\right)}{|z|^{d+\alpha}}\,dz\\
&=-\e^{1-\alpha}\left\langle F\left(x/\e\right), \nabla \bar u_\e(x)\right\rangle+
\e
\left\langle\phi\left(x/\e\right), \LL_\e\left(\nabla \bar u_\e \right)(x) \right\rangle\\
&\quad + \e\int_{\R^d}\left\langle \phi\left((x+z)/{\e}\right)-
\phi\left(x/\e\right), \nabla\bar u_\e(x+z)-\nabla \bar u_\e(x)\right\rangle\frac{K\left(x/\e,(x+z)/{\e}\right)}{|z|^{d+\alpha}}\,dz\\
&=:-\e^{1-\alpha}\left\langle F\left(x/\e\right), \nabla \bar u_\e(x)\right\rangle+I_{2}^\e(x)+I_{3}^\e(x),
\end{align*}
where in the second equality we used the equation \eqref{t2-1-3}.

Since $\bar u \in C^{3}_{(-\alpha/2)}(D)$, by \eqref{l2-2-1a} and \eqref{l2-2-3},
\begin{align*}
|I_{2}^\e(x)|&=\left|\left\langle \e\phi\left(x/\e\right), \hat \LL_{1,\e}(\nabla \bar u_\e)(x)+\e^{1-\alpha}\nabla^2 \bar u_\e(x)\cdot F\left(x/\e\right)\right\rangle\right|\\
&\le \e\|\phi\|_\infty\left(\left|\hat \LL_{1,\e} \nabla\bar u_\e(x)\right|+\e^{1-\alpha}\|F\|_\infty |\nabla^2 \bar u_\e(x)|\right)\\
&\le c_3\left(\e^{2-\alpha}\delta_D(x)^{-2+\alpha/2}+\e^{\alpha/2}\delta_D(x)^{-\alpha}\right)\I_{D_{4\e}}(x)
+c_3\e^{-\alpha/2}\I_{D \backslash D_{4\e}}(x).
\end{align*}
On the other hand, we write
\begin{align*}
I_{3}^\e(x)&=\e^{1-\alpha}\int_{\{|z|\le 2\}}
\left\langle \phi\left(x/\e+z\right)-
\phi\left(x/\e\right), \nabla\bar u_\e(x+\e z)-\nabla \bar u_\e(x)\right\rangle\frac{K\left(x/\e,x/\e+z\right)}{|z|^{d+\alpha}}\,dz\\
&\quad+\e^{1-\alpha}\int_{\{|z|> 2\}}
\left\langle \phi\left(x/\e+z\right)-
\phi\left(x/\e\right), \nabla\bar u_\e(x+\e z)-\nabla \bar u_\e(x)\right\rangle\frac{K\left(x/\e,x/\e+z\right)}{|z|^{d+\alpha}}\,dz\\
&=:I_{31}^\e(x)+I_{32}^\e(x).
\end{align*}
By the mean-value theorem and \eqref{l2-2-3},
\begin{align*}
|I_{31}^\e(x)|&\le c_4\e^{2-\alpha}\sup_{y\in \R^d:|y-x|\le 2\e}|\nabla^2 \bar u_\e(y)|\cdot \|\nabla \phi\|_\infty\cdot
\int_{\{|z|\le 2\}}\frac{|z|^2}{|z|^{d+\alpha}}\,dz\\
&\le c_5\e^{2-\alpha}\delta_D(x)^{-2+\alpha/2}\I_{D_{4\e}}(x)
+c_5\e^{-\alpha/2}\I_{D \backslash D_{4\e}}(x).
\end{align*}
Applying the mean-value theorem again as well as
a
change of variable, we have
\begin{align*}
|I_{32}^\e(x)|&\le c_6\e\|\phi\|_\infty\cdot\int_{\{|z|>2\e\}}\left|\nabla \bar u_\e(x+z)-\nabla \bar u_\e(x)\right|\frac{1}{|z|^{d+\alpha}}\,dz\\
&\le c_7\e\int_0^1\int_{\{|z|>2\e\}}\left|\nabla^2 \bar u_\e(x+sz)\right|\frac{1}{|z|^{d+\alpha-1}}\,dz\,ds\\
&\le c_8\e\int_0^1\int_{\{|z|>2\e\}}\left(\delta_D(x+sz)^{-2+\alpha/2}\I_{\{x+sz\in D_{3\e}\}}+\e^{-2+\alpha/2}\I_{\{x+sz\in D\backslash D_{3\e}\}}\right)\frac{1}{|z|^{d+\alpha-1}}\,dz\,ds\\
&\le c_9\e\int_0^1s^{1-\alpha}\int_{\{|z|>2s\e\}}\left(\delta_D(x+z)^{-2+\alpha/2}\I_{\{x+z\in D_{3\e}\}}+\e^{-2+\alpha/2}\I_{\{x+z\in D\backslash D_{3\e}\}}\right)
\frac{1}{|z|^{d+\alpha-1}}\,dz\,ds\\
&\le c_{10} \e^{2-\alpha}\delta_D(x)^{-2+\alpha/2}\I_{D_{4\e}}(x)
+c_{10}\e^{\alpha/2}\delta_D(x)^{-\alpha}\I_{D_{4\e}}(x)
+c_{10}\e^{-\alpha/2}\I_{D \backslash D_{4\e}}(x),
\end{align*} where in the last inequality we used \eqref{l2-5-1} and \eqref{l2-5-2}.
Thus, by
all the
estimates above for $I_2^\e(x)$, $I_{31}^\e(x)$ and $I_{32}^\e(x)$, we get
\begin{align*}
\int_D \left(|I_2^\e(x)|+|I_3^\e(x)|\right)\,dx\le
\int_D \left(|I_2^\e(x)|+|I_{31}^\e(x)|+|I_{32}^\e(x)|\right)\,dx\le c_{11}\e^{1-\alpha/2}.
\end{align*}
Moreover, we set
\begin{align*}
&\LL_\e\left(\e^\alpha\psi\left(\eps^{-1} \cdot \right)\bar K^{-1}\eta_\e(\cdot)\bar \LL \bar u_\e(\cdot)\right)(x)\\
&=\e^\alpha\LL_\e\psi\left(\eps^{-1} \cdot \right)(x)\cdot\bar K^{-1}\eta_\e(x)\bar \LL\bar u_\e(x)+
\e^\alpha\bar K^{-1}\psi\left(x/\e\right)\LL_\e (\eta_\e \bar \LL \bar u_\e)(x)\\
&\quad +\e^\alpha \bar K^{-1}
\int_{\R^d}\left(\psi\left((x+z)/{\e}\right)-\psi\left(x/\e\right)\right)
\left(\eta_\e(x+z)\bar \LL \bar u_\e(x+z)-\eta_\e(x)\bar \LL \bar u_\e(x)\right)
\frac{K\left(\frac{x}{\e},\frac{x+z}{\e}\right)}{|z|^{d+\alpha}}\,dz
\\
&=:\eta_\e(x)\left(\bar \LL \bar u_\e(x)-\bar \LL_{\e} \bar u_\e(x)\right)+I_4^\e(x)+I_5^\e(x),
\end{align*}
where in the last equality we have used  the fact
\begin{align}\label{t2-1-5}
\e^\alpha\LL_\e\psi\left(\eps^{-1} \cdot \right)(x)\cdot\bar K^{-1}\bar \LL\bar u_\e(x)=\left(-\bar K\left(x/\e\right)+\bar K\right)\bar K^{-1}\bar \LL u_\e(x)=
\bar \LL \bar u_\e(x)-\bar \LL_{\e} \bar u_\e(x)
\end{align}
that can be verified directly by \eqref{t2-1-3}.
We obtain by \eqref{l2-8-1} and \eqref{e1-4} that
\begin{equation}\label{e:3.5}
 \begin{split}
\bar \LL \bar u_\e(x)
=h(x)\eta_\e(x)+I_6^\e(x),
\end{split}
\end{equation}
where
\begin{align*}
|I_6^\e(x)|&\le c_{12}\left(\e\delta_D(x)^{-1-\alpha/2}\I_{D_{4\e}}(x)+\e^{-\alpha/2}\I_{D\backslash D_{4\e}}(x)\right).
\end{align*}
This yields  that
\begin{equation}\label{t2-1-7a}
|\bar \LL_{\e}\bar u_\e(x)|=\left| {\bar K\left(x/\e\right)}\cdot {\bar K}^{-1}\bar \LL \bar u_\e(x)\right|
\le c_{13}\e^{-\alpha/2}, \quad x\in D\backslash D_{4\e}.
\end{equation}
Using \eqref{l2-8-1} and \eqref{l2-8-2}
(by taking $f=\bar u$ to get
regularity estimates for $\bar \LL \bar u_\e$),
based on these regularity estimates for $\bar \LL \bar u_\e$
and following the arguments for $I_2(x)$, $I_{31}^\e(x)$ and $I_{32}^\e(x)$, we can show that
\begin{align*}
|I_4^\e(x)|+|I_5^\e(x)|&\le c_{14}\left(\e^{2-\alpha}\delta_D(x)^{-2+\alpha/2}+\e^{\alpha/2}\delta_D(x)^{-\alpha}\right)\I_{D_{4\e}}(x)
+c_{14}\e^{-\alpha/2}\I_{D \backslash  D_{4\e}}(x),
\end{align*}
and so
\begin{align*}
\int_D \left(|I_4^\e(x)|+|I_5^\e(x)|\right)\,dx\le c_{15}\e^{1-\alpha/2}.
\end{align*}
 Therefore,
putting
all the estimates for $I_i^\e(x)$, $i=1,\cdots, 5$ above together,
yields that
\begin{equation}\label{t2-1-7}
\LL_\e v_\e(x)=\eta_\e(x)\bar \LL \bar u_\e (x)+(1-\eta_\e(x))\bar \LL_{\e}\bar u_\e(x) +N_1^\e(x),
\end{equation}
where $N_1^\e(x)$ satisfies
\begin{align*}
\int_D |N_1^\e(x)|\,dx\le c_{16}\e^{1-\alpha/2}.
\end{align*}
This, along with \eqref{e1-3} and
\eqref{e:3.5},
in turn gives us that
\begin{align*}
\LL_\e\left(u_\e-v_\e\right)(x)=h(x)(1-\eta_\e(x)^2)-(1-\eta_\e(x))\bar \LL_{\e}\bar u_\e(x)+N_2^\e(x),\quad  x\in D,
\end{align*}
where
\begin{equation}\label{t2-1-8}
\begin{split}
&  \int_D\left(|h(x)(1-\eta_\e(x)^2)|+|(1-\eta_\e(x))\bar \LL_{\e}\bar u_\e(x)|+
|N_2^\e(x)|\right)dx\\
&\le
\int_{D \backslash D_{2\e}}\left(|h(x)|+|\bar \LL_{\e}\bar u_\e(x)|\right)\,dx+\int_D |N_2^\e(x)|\,dx\le c_{17}\e^{1-\alpha/2}.
\end{split}
\end{equation}
Here we have also used \eqref{t2-1-7a} and \eqref{l2-2-1} in the last inequality.

Note that $\left\langle \phi\left(\eps^{-1} \cdot \right), \nabla \bar u_\e(\cdot)\right\rangle\in
{\rm Dom}  (\LL_\e^D)$ and $\psi\left(\eps^{-1} \cdot \right)\eta_\e(\cdot)\LL \bar u_\e(\cdot)\in
{\rm Dom}  (\LL_\e^D)$, so
$u_\e-v_\e\in
{\rm Dom}  (\LL_\e^D)$. Then, according to \eqref{l2-1-1} and \eqref{t2-1-8}, we arrive at
\begin{align*}
\int_D|u_\e(x)-v_\e(x)|\,dx
&\le
c_{19}\int_D\left(|h(x)(1-\eta_\e(x)^2)|+|(1-\eta_\e(x))\bar \LL_{\e}\bar u_\e(x)|+|N_2^\e(x)|\right)\,dx\le c_{20}\e^{1-\alpha/2}.
\end{align*}
On the other hand,
we have by \eqref{e:3.4},
\eqref{e:3.5}  and  \eqref{l2-2-3} that
\begin{align*}
\int_{D}|v_\e(x)-\bar u(x)|\,dx
&\le \int_D |\bar u(x)||1-\eta_\e(x)|\,dx+
\e\|\phi\|_\infty \int_D |\nabla \bar u_\e(x)|\,dx+
\e^\alpha \|\psi\|_\infty \bar K^{-1}\int_D |\bar\LL \bar u_\e(x)|\,dx\\
&\le c_{21}\e,
\end{align*} where in the last inequality we used the fact that $D$ is bounded.
Therefore,
\begin{align*}
\int_D|u_\e(x)-\bar u(x)|\,dx\le \int_D|u_\e(x)-v_\e(x)|\,dx+\int_{D}|v_\e(x)-\bar u(x)|\,dx\le c_{22}\e^{1-\alpha/2}.
\end{align*} The proof for $\alpha\in (1,2)$ is complete.

{\bf Case 2: $\alpha=1$.}\quad In this case, we set
$$
v_\e(x):=\bar u_\e(x)+\e\left\langle \phi_\e\left(x/\e\right),\nabla\bar u_\e(x)\right\rangle
+\e\bar K^{-1}\psi\left(x/\e\right)\eta_\e(x)\bar \LL\bar u_\e(x),\quad x\in D,
$$
where $\phi_\e$ and $\psi$ are those functions given in \eqref{t2-1-3}.

According to \eqref{l2-3-1}, we obtain
\begin{align*}
\LL_\e \bar u_\e(x)&=\hat \LL_{2,\e} \bar u_\e(x)
+\left\langle F_\e\left(x/\e\right), \nabla \bar u_\e(x)\right\rangle=\bar \LL_{\e} \bar u_\e(x)+\left\langle F_\e\left(x/\e\right), \nabla \bar u_\e(x)\right\rangle+J_1^\e(x),
\end{align*}
where $\hat \LL_{2,\e}$ is defined by \eqref{l2-2-2}, and
$J_1^\e(x)$ satisfies
\begin{align*}
|J_1^\e(x)|\le  c_{1}\left(\e^{1/2}
\delta_D(x)^{-1}
+\e\delta_D(x)^{-3/2}(1+|\log \e|)\right)\I_{D_{4\e}}(x)
+c_{1}\e^{-1/2}(1+|\log \e|)\I_{D\backslash D_{4\e}}(x),
\end{align*}
which in particular implies that
\begin{align*}
\int_D |J_1^\e(x)|\,dx\le c_2\e^{1/2}(1+|\log \e|).
\end{align*}

As in Case 1,
by \eqref{t2-1-3},
\begin{align*}
 &  \LL_\e\left(\e\left\langle \phi_\e\left(\eps^{-1} \cdot \right),\nabla \bar u_\e(\cdot)\right\rangle \right)(x) \\
&=-\left\langle F_\e\left(x/\e\right), \nabla \bar u_\e(x)\right\rangle+
\e\left\langle\phi_\e\left(x/\e\right), \LL_\e\left(\nabla \bar u_\e \right)(x) \right\rangle\\
&\quad + \e\int_{\R^d}\left\langle \phi_\e\left((x+z)/{\e}\right)-
\phi_\e\left(x/\e\right), \nabla\bar u_\e(x+z)-\nabla \bar u_\e(x)\right\rangle\frac{K\left(x/\e,(x+z)/{\e}\right)}{|z|^{d+1}}\,dz\\
&=:-\left\langle F_\e\left(x/\e\right), \nabla \bar u_\e(x)\right\rangle+J_{2}^\e(x)+J_{3}^\e(x).
\end{align*}
Using \eqref{l2-2-3}, \eqref{l2-3-1a} and \eqref{t2-1-3a}, we obtain
\begin{align*}
|J_2^\e(x)|&\le c_3 \e\|\phi_\e\|_\infty \left(|\hat \LL_{2,\e}(\nabla\bar u_\e)(x)|+\|F_\e\|_\infty|\nabla^2 \bar u_\e(x)|\right)\\
&\le c_4\e(1+|\log \e|)\left(\delta_D(x)^{-3/2}(1+|\log \e|)+\e^{-1/2}
\delta_D(x)^{-1}
\right)\I_{D_{4\e}}(x)\\
&\quad +c_4\e^{-1/2}(1+|\log \e|^2)\I_{D\backslash D_{4\e}}(x).
\end{align*}
Set
\begin{align*}
J_{3}^\e(x)&=\int_{\{|z|\le 2\}}
\left\langle \phi_\e\left(x/\e+z\right)-
\phi_\e\left(x/\e\right), \nabla\bar u_\e(x+\e z)-\nabla \bar u_\e(x)\right\rangle\frac{K\left(x/\e,x/\e+z\right)}
{|z|^{d+1}}\,dz\\
&\quad+\int_{\{|z|> 2\}}
\left\langle \phi_\e\left(x/\e+z\right)-
\phi_\e\left(x/\e\right), \nabla\bar u_\e(x+\e z)-\nabla \bar u_\e(x)\right\rangle\frac{K\left(x/\e,x/\e+z\right)}{|z|^{d+1}}\,dz\\
&=:J_{31}^\e(x)+J_{32}^\e(x).
\end{align*}
By the mean-value theorem and \eqref{t2-1-4}, we have
\begin{align*}
|J_{31}^\e(x)|&\le c_5\e(1+|\log \e|)\sup_{y\in \R^d:|y-x|\le 2\e}|\nabla^2 \bar u_\e(y)|\cdot
\int_{\{|z|\le 2\}}
\frac{|z|^{1+\theta}}{|z|^{d+1}}
\,dz\\
&\le c_6\e(1+|\log \e|)\delta_D(x)^{-3/2}\I_{D_{4\e}}(x)
+c_{6}\e^{-1/2}(1+|\log \e|)\I_{D \backslash D_{4\e}}(x).
\end{align*}
Applying the mean-value theorem again, \eqref{t2-1-3a} and
a
change of variable, it holds that
\begin{align*}
&|J_{32}^\e(x)|\\
&\le c_7\e\|\phi_\e\|_\infty\cdot
\left(\int_{\{2\e<|z|\le 1\}}\left|\nabla \bar u_\e(x+z)-\nabla \bar u_\e(x)\right|\frac{1}{|z|^{d+1}}\,dz
+\int_{\{|z|>1\}}\left(|\nabla \bar u_\e(x+z)|+|\nabla \bar u_\e(x)|\right)\frac{1}{|z|^{d+1}}\,dz\right)\\
&\le c_8\e(1+|\log \e|)\bigg[\int_0^1\int_{\{2\e<|z|\le 1\}}\left|\nabla^2 \bar u_\e(x+sz)\right|\frac{1}{|z|^{d}}\,dz\,ds+
\left(\delta_D(x)^{-1/2}\I_{D_{4\e}}(x)+\e^{-1/2}\I_{D\backslash D_{4\e}}(x)\right)\\
&\qquad\qquad\qquad \qquad\,\, +\int_{\{|z|>1\}} \left(\delta_D(x+z)^{-1/2}\I_{\{x+z\in D_{4\e}\}}+\e^{-1/2}\I_{\{x+z\in D\backslash D_{4\e}\}}\right)\frac{1}{|z|^{d+1}}\,dz\bigg]\\
&\le c_{9}\e(1+|\log \e|)\bigg[\int_0^1\int_{\{2s\e<|z|\le s\}}\left(\delta_D(x+z)^{-3/2}\I_{\{x+z\in D_{3\e}\}}+\e^{-3/2}\I_{\{x+z\in D\backslash D_{3\e}\}}\right)
\frac{1}{|z|^{d}}\,dz\,ds\\
&\qquad\qquad\qquad \qquad \,\, +\left(\delta_D(x)^{-1/2}\I_{D_{4\e}}(x)+\e^{-1/2}\I_{D\backslash D_{4\e}}(x)\right)\\
&\qquad\qquad\qquad \qquad \,\,+\int_{\{|z|>1\}} \left(\delta_D(x+z)^{-1/2}\I_{\{x+z\in D_{4\e}\}}+\e^{-1/2}\I_{\{x+z\in D\backslash D_{4\e}\}}\right)\frac{1}{|z|^{d+1}}\,dz\bigg].
\end{align*}
Hence, by \eqref{l2-5-3}--\eqref{l2-5-5} and all the estimates above for $J_2^\e(x)$, $J_{31}^\e(x)$ and $J_{32}^\e(x)$,  we get
\begin{align*}
\int_D \left(|J_2^\e(x)|+|J_3^\e(x)|\right)\,dx\le
\int_D \left(|J_2^\e(x)|+|J_{31}^\e(x)|+|J_{32}^\e(x)|\right)\,dx\le c_{10}\e^{1/2}(1+|\log \e|^2).
\end{align*}

Next, we define
\begin{align*}
&\LL_\e\left(\e\psi\left(\eps^{-1} \cdot \right)\bar K^{-1}\eta_\e(\cdot)\bar \LL \bar u_\e(\cdot)\right)(x)\\
&=\e \LL_\e\psi\left(\eps^{-1} \cdot \right)(x)\cdot\bar K^{-1}\eta_\e(x)\bar \LL\bar u_\e(x)+
\e \bar K^{-1}\psi\left(x/\e\right)\LL_\e (\eta_\e \bar \LL \bar u_\e)(x)\\
&\quad +\e \bar K^{-1}
\int_{\R^d}\left(\psi\left((x+z)/{\e}\right)-\psi\left(x/\e\right)\right)
\left(\eta_\e(x+z)\bar \LL \bar u_\e(x+z)-\eta_\e(x)\bar \LL \bar u_\e(x)\right)
\frac{K\left(x/\e,(x+z)/\e\right)}{|z|^{d+1}}\,dz\\
&=:\eta_\e(x)\left(\bar \LL \bar u_\e(x)-\bar \LL_{\e} \bar u_\e(x)\right)+J_4^\e(x)+J_5^\e(x).
\end{align*}
Here we have used the fact \eqref{t2-1-5}.

By \eqref{l2-8-1} and  \eqref{e1-3}, we know
\begin{equation}\label{t2-1-9}
\begin{split}
\bar \LL \bar u_\e(x)=h(x)\eta_\e(x)+J_6^\e(x).
\end{split}
\end{equation}
Here
\begin{align*}
|J_6^\e(x)|&\le c_{11}\left(\e\delta_D(x)^{-3/2}\I_{D_{4\e}}(x)+\e^{-1/2}\I_{D\backslash D_{4\e}}(x)\right).
\end{align*}

Using \eqref{l2-8-1} and \eqref{l2-8-3} (to get regularity estimates for $\bar \LL \bar u_\e$),
based on these regularity estimates for $\bar \LL \bar u_\e$
and following the arguments for
$J_2^\e(x)$,
$J_{31}^\e(x)$ and $J_{32}^\e(x)$, we can show that
\begin{align*}
\int_D \left(|J_4^\e(x)|+|J_5^\e(x)|\right)\,dx\le c_{12}\e^{1/2}(1+|\log \e|^2).
\end{align*}

Putting
all the estimates for $J_i^\e(x)$, $i=1,\cdots, 5$, above  together yields that
\begin{equation}
\label{e:3.10}
\LL_\e v_\e(x)=\eta_\e(x)\bar \LL \bar u_\e (x)+(1-\eta_\e(x))\bar \LL_{\e}\bar u_\e(x) +N_3^\e(x),
\end{equation}
where
\begin{align*}
\int_D |N_3^\e(x)|\,dx\le c_{13}\e^{1/2}(1+|\log \e|^2).
\end{align*}
This, along with \eqref{e1-3} and \eqref{t2-1-9}, gives us that
\begin{align*}
\LL_\e\left(u_\e-v_\e\right)(x)=h(x)(1-\eta_\e(x)^2)-(1-\eta_\e(x))\bar \LL_{\e}\bar u_\e(x)+N_4^\e(x),\quad  x\in D,
\end{align*}
where
\begin{equation}\label{t2-1-9a}
\begin{split}
&\quad \int_D\left(|h(x)(1-\eta_\e(x)^2)|+|(1-\eta_\e(x))\bar \LL_{\e}\bar u_\e(x)|+
|N_4^\e(x)|\right)\,dx\le c_{14}\e^{1/2}(1+|\log \e|^2).
\end{split}
\end{equation}
Here we have also used \eqref{t2-1-7a} (which still holds for $\alpha\in (0,1]$ by its proof).
As explained in the proof
of Case 1,
we can apply \eqref{t2-1-9a} and \eqref{l2-1-1} to obtain
\begin{align*}
\int_D |u_\e(x)-v_\e(x)|\,dx\le c_{15}\e^{1/2}(1+|\log \e|^2).
\end{align*}

Combining
\eqref{e:3.4} and \eqref{t2-1-9} with \eqref{l2-2-3} yields  that
\begin{align*}
&   \int_{D}|v_\e(x)-\bar u(x)|\,dx\\
&\le \int_D |\bar u(x)||1-\eta_\e(x)|\,dx+
\e\|\phi_\e\|_\infty \int_D |\nabla \bar u_\e(x)|\,dx+
\e \|\psi\|_\infty \bar K^{-1}\int_D |\bar\LL \bar u_\e(x)|\,dx\\
&\le c_{16}\e^{1/2}(1+|\log \e|^2).
\end{align*}
Therefore,
\begin{align*}
\int_{D}|u_\e(x)-\bar u(x)|\,dx\le \int_D |u_\e(x)-v_\e(x)|\,dx+\int_{D}|v_\e(x)-\bar u(x)|\,dx\le c_{17}\e^{1/2}(1+|\log \e|^2).
\end{align*}
Hence, the proof for the case $\alpha=1$ is finished.

{\bf Case 3: $\alpha\in (0,1)$.}\quad Define
$$
v_\e(x):=\bar u_\e(x)+\e\left\langle \phi_\e\left(x/\e\right),\nabla\bar u_\e(x)\right\rangle
+\e^\alpha\bar K^{-1}\psi\left(x/\e\right)\eta_\e(x)\bar \LL\bar u_\e(x),\quad x\in D.
$$
Here $\phi_\e$ and $\psi$ are those functions in \eqref{t2-1-3}.
Therefore, by \eqref{l2-3-1}, we obtain
\begin{align*}
\LL_\e \bar u_\e(x)&=\hat \LL_{2,\e} \bar u_\e(x)
+\e^{1-\alpha}\left\langle F_\e\left(x/\e\right), \nabla \bar u_\e(x)\right\rangle=\bar \LL_{\e} \bar u_\e(x)+
\e^{1-\alpha}\left\langle F_\e\left(x/\e\right), \nabla \bar u_\e(x)\right\rangle+L_1^\e(x),
\end{align*}
where
$L_1^\e$ satisfies
\begin{align*}
|L_1^\e(x)|\le  c_{1}\e^{\alpha/2}
\delta_D(x)^{-\alpha}
\I_{D_{4\e}}(x)
+c_{1}\e^{-1+\alpha/2}\I_{D \backslash D_{4\e}}(x)
\end{align*}
that implies
\begin{align*}
\int_D |L_1^\e(x)|\,dx\le c_2\e^{\alpha/2}.
\end{align*}

As before, by \eqref{t2-1-3}, we have
\begin{align*}
 & \LL_\e\left(\e\left\langle \phi_\e\left(\eps^{-1} \cdot \right),\nabla \bar u_\e(\cdot)\right\rangle \right)(x) \\
&=-\e^{1-\alpha}\left\langle F_\e\left(x/\e\right), \nabla \bar u_\e(x)\right\rangle
+\e\left\langle\phi_\e\left(x/\e\right), \LL_\e\left(\nabla \bar u_\e \right)(x) \right\rangle\\
&\quad + \e\int_{\R^d}\left\langle \phi_\e\left((x+z)/{\e}\right)-
\phi_\e\left(x/\e\right), \nabla\bar u_\e(x+z)-\nabla \bar u_\e(x)\right\rangle\frac{K\left(x/\e,(x+z)/{\e}\right)}{|z|^{d+\alpha}}\,dz\\
&=:-\e^{1-\alpha}\left\langle F_\e\left(x/\e\right), \nabla \bar u_\e(x)\right\rangle+L_{2}^\e(x)+L_{3}^\e(x).
\end{align*}
According to \eqref{l2-2-3}, \eqref{l2-3-2} and \eqref{t2-1-3a}, we obtain
that for every $\beta\in (\alpha/2,\alpha)$,
\begin{align*}
|L_2^\e(x)|&\le c_3 \e\|\phi_\e\|_\infty |\LL_{\e}(\nabla\bar u_\e)(x)|\\
&\le c_{4}\e^\alpha\left(\e^{-(\beta-\alpha/2)}\delta_D(x)^{-1-(\alpha-\beta)}+\e^{-(\alpha-\beta)}\delta_D(x)^{-1-\beta+\alpha/2}\right)\I_{D_{4\e}}(x)+
c_{4}\e^{-1+\alpha/2}\I_{D\backslash D_{4\e}}(x).
\end{align*}
We set
\begin{align*}
L_{3}^\e(x)&=\e^{1-\alpha}\int_{\{|z|\le 2\}}
\left\langle \phi_\e\left(x/\e+z\right)-
\phi\left(x/\e\right), \nabla\bar u_\e(x+\e z)-\nabla \bar u_\e(x)\right\rangle\frac{K\left(x/\e,x/\e+z\right)}{|z|^{d+\alpha}}\,dz\\
&\quad+\e^{1-\alpha}\int_{\{|z|> 2\}}
\left\langle \phi_\e\left(x/\e+z\right)-
\phi_\e\left(x/\e\right), \nabla\bar u_\e(x+\e z)-\nabla \bar u_\e(x)\right\rangle\frac{K\left(x/\e,x/\e+z\right)}{|z|^{d+\alpha}}\,dz\\
&=:L_{31}^\e(x)+L_{32}^\e(x).
\end{align*}
By the mean-value theorem and \eqref{t2-1-3a}, we have
\begin{align*}
|L_{31}^\e(x)|&\le c_5\e^{2-\alpha}\|\phi_\e\|_\infty\cdot\sup_{y\in \R^d:|y-x|\le 2\e}|\nabla^2 \bar u_\e(y)|\cdot
\int_{\{|z|\le 2\}}\frac{|z|}{|z|^{d+\alpha}}\,dz\\
&\le c_6\e\delta_D(x)^{-2+\alpha/2}\I_{D_{4\e}}(x)
+c_{6}\e^{-1+\alpha/2}\I_{D \backslash D_{4\e}}(x).
\end{align*}
Applying \eqref{l2-3-3} (with $f=\bar u$ in \eqref{l2-3-3}) and \eqref{t2-1-3a}, we get
\begin{align*}
|L_{32}^\e(x)|&\le c_7\e\|\phi_\e\|_\infty\cdot
\int_{\{|z|>2\e\}}\left|\nabla \bar u_\e(x+z)-\nabla \bar u_\e(x)\right|\frac{1}{|z|^{d+\alpha}}\,dz\\
&\le c_8\e^\alpha\int_{\{|z|>2\e\}}\Big(\delta_D(x+z)^{-1-\beta+\alpha/2}\I_{\{x+z\in D_{3\e}\}}+
\e^{-1-\beta+\alpha/2}
\I_{\{x+z\in D\backslash D_{3\e}\}}
\Big)\frac{1}{|z|^{d+\alpha-\beta}}\,dz\\
&\quad +c_{8}\e^\alpha\left(\delta_D(x)^{-1-\beta+\alpha/2}\I_{D_{4\e}}(x)+\e^{-1-\beta+\alpha/2}\I_{D\backslash D_{4\e}}(x)\right)
\cdot\int_{\{|z|>2\e\}}\frac{1}{|z|^{d+\alpha-\beta}}\,dz\\
&\le c_9\left(\e^{-(\beta- {3\alpha}/{2})}\delta_D(x)^{-1-(\alpha-\beta)}+\e^{\beta}\delta_D(x)^{-1-\beta+\alpha/2}\right)\I_{D_{3\e}}(x)+c_9
\e^{-1+\alpha/2}\I_{D\backslash D_{3\e}}(x),
\end{align*}
where the last inequality follows from \eqref{l2-5-3a} and \eqref{l2-5-4a}.
Hence, by all the estimates above for $L_2^\e(x)$, $L_{31}^\e(x)$ and $L_{32}^\e(x)$
and using the fact $\beta\in (\alpha/2,\alpha)$,
we get
\begin{align*}
\int_D \left(|L_2^\e(x)|+|L_3^\e(x)|\right)\,dx\le
\int_D \left(|L_2^\e(x)|+|L_{31}^\e(x)|+|L_{32}^\e(x)|\right)\,dx\le c_{10}\e^{\alpha/2}.
\end{align*}

Define
\begin{align*}
&  \LL_\e\left(\e^\alpha\psi\left(\eps^{-1} \cdot \right)\bar K^{-1}\eta_\e(\cdot)\bar \LL \bar u_\e(\cdot)\right)(x)\\
&=\e^\alpha \LL_\e\psi\left(
 \e^{-1} \cdot
\right)(x)\cdot\bar K^{-1}\eta_\e(x)\bar \LL\bar u_\e(x)+
\e^\alpha \bar K^{-1}\psi\left(x/\e\right)\LL_\e (\eta_\e \bar \LL \bar u_\e)(x)\\
&\quad +\e^\alpha \bar K^{-1}
\int_{\R^d}\left(\psi\left((x+z)/{\e}\right)-\psi\left(x/\e\right)\right)
\left(\eta_\e(x+z)\bar \LL \bar u_\e(x+z)-\eta_\e(x)\bar \LL \bar u_\e(x)\right)\frac{1}{|z|^{d+\alpha}}\,dz\\
&=:\eta_\e(x)\left(\bar \LL \bar u_\e(x)-\bar \LL_{\e} \bar u_\e(x)\right)+L_4^\e(x)+L_5^\e(x).
\end{align*}
Here we have used the property \eqref{t2-1-5} again.
By \eqref{l2-8-1} and  \eqref{e1-3}, we know
\begin{equation}\label{t2-1-10}
\begin{split}
\bar \LL \bar u_\e(x)=h(x)\eta_\e(x)+L_6^\e(x)
\end{split}
\end{equation}
where
\begin{align*}
|L_6^\e(x)|&\le c_{11}\left(\e\delta_D(x)^{-1-\alpha/2}\I_{D_{4\e}}(x)+\e^{-\alpha/2}\I_{D\backslash D_{4\e}}(x)\right).
\end{align*}

Furthermore, we have
\begin{align*}
|L_5^\e(x)|
&\le c_{12}\e^\alpha\|\psi\|_\infty\left(
\int_{\{|z|\le 2\e\}}+\int_{\{|z|> 2\e\}} \right) \left|\eta_\e(x+z)\bar \LL \bar u_\e(x+z)-\eta_\e(x)\bar \LL \bar u_\e(x)\right|\frac{1}{|z|^{d+\alpha}}\,dz \\
&=:L_{51}^\e(x)+L_{52}^\e(x).
\end{align*}
According to \eqref{l2-8-3},
\begin{align*}
L_{51}^\e(x)&\le c_{13}\e^\alpha\sup_{y:|y-x|\le 2\e}|\nabla (\eta_\e(y)\bar \LL \bar u_\e(y))|\cdot\int_{\{|z|\le 2\e\}}\frac{1}{|z|^{d+\alpha-1}}\,dz \\
&\le c_{14}\e\left(\delta_D(x)^{-1-\alpha/2}
\I_{D_{4\e}}(x)+\e^{-1-\alpha/2}\I_{D\backslash D_{4\e}}(x)\right)\\
&= c_{14}\e\delta_D(x)^{-1-\alpha}\I_{D_{4\e}}(x)+c_{14}\e^{-\alpha/2}\I_{D\backslash D_{4\e}}(x).
\end{align*}

Using \eqref{t2-1-10}, we get
\begin{align*}
L_{52}^\e(x)&\le c_{15}\e^\alpha\int_{\{|z|>2\e\}}\left|h(x+z)\eta^2_\e(x+z)-h(x)\eta^2_\e(x)\right|\frac{1}{|z|^{d+\alpha}}\,dz\\
&\quad +c_{15}\e^\alpha\int_{\{|z|>2\e\}}\left(|L_6^\e(x)|+|L_6^\e(x+z)|\right)\frac{1}{|z|^{d+\alpha}}\,dz\\
&=:L_{521}^\e(x)+L_{522}^\e(x).
\end{align*}
Noting that $h\in C^{1}_{(\alpha/2)}(D)\subset C^\beta_{(\alpha/2)}(D)$ for every $\beta\in (\alpha/2,\alpha)$, by the definition and the argument for \eqref{l2-3-3}, it holds that for all $x,z\in \R^d$ with $|z|>2\e$,
\begin{equation}\label{t2-1-11}
\begin{split}
 \frac{|h (x+z)\eta^2_\e(x+z)-h (x)\eta^2_\e(x)|}{|z|^\beta}
 \le
c_{16}\Big(&\delta_D(x+z)^{-\beta-\alpha/2}\I_{\{x+z\in D_{3\e}\}}+
\delta_D(x)^{-\beta-\alpha/2}\I_{\{x\in D_{3\e}\}}
 \\
 &+
\e^{-\beta-\alpha/2}\left(\I_{\{x+z\in D\backslash D_{3\e}\}}+\I_{\{x\in D\backslash D_{3\e}\}}\right)\Big).
\end{split}
\end{equation}
By \eqref{t2-1-11}, we deduce that
\begin{align*}
L_{521}^\e(x)\le & c_{17}\e^\alpha\int_{\{|z|>2\e\}}\left(\delta_D(x+z)^{-\beta-\alpha/2}\I_{\{x+z\in D_{3\e}\}}+
\e^{-\beta-\alpha/2}\I_{\{x+z\in D\backslash D_{3\e}\}}\right)\frac{1}{|z|^{d+\alpha-\beta}}\,dz\\
&+c_{17}\e^\alpha\left(\delta_D(x)^{-\beta-\alpha/2}\I_{D_{3\e}}(x)+
\e^{-\beta-\alpha/2}\I_{D\backslash D_{3\e}}(x)\right)\cdot \int_{\{|z|>2\e\}}\frac{1}{|z|^{d+\alpha-\beta}}\,dz\\
\le & c_{18}\left(\e^{\beta}\delta_D(x)^{-\beta-\alpha/2}+
(\e^\alpha\vee \e^{1-\beta
+\alpha/2})\cdot
\delta_D(x)^{-1-(\alpha-\beta)}\right)\I_{D_{4\e}}(x)+c_{19}\e^{-\alpha/2}\I_{D\backslash D_{4\e}}(x),
\end{align*}
where in the last inequality we have used \eqref{l2-5-4a} and the fact
\begin{align*}
&\int_{\{|z|>2\e\}}\delta_D(x+z)^{-\beta-\alpha/2}\I_{\{x+z\in D_{3\e}\}}\frac{1}{|z|^{d+\alpha-\beta}}\,dz\\
&\le c_{19}\left(
(\e^{1-\beta-\alpha/2}\vee 1)\cdot
\delta_D(x)^{-1-(\alpha-\beta)}+\e^{-(\alpha-\beta)}\delta_D(x)^{-\beta-\alpha/2}\right)\I_{D_{4\e}}(x)+c_{19}
\e^{-{3\alpha}/{2}}\I_{D\backslash D_{4\e}}(x)
\end{align*}
that can be proved by
following the proof of \eqref{l2-5-3a} line by line.
According to \eqref{t2-1-10}, we obtain
\begin{align*}
L_{522}^\e(x)&\le c_{20}\e^{1+\alpha}\int_{\{|z|>2\e\}}\left(
\delta_D(x+z)^{-1-\alpha/2}
\I_{\{x+z\in D_{3\e}\}}+\e^{-1-\alpha/2}
\I_{\{x+z\in D\backslash D_{3\e}\}}\right)\frac{1}{|z|^{d+\alpha}}\,dz\\
&\quad +c_{20}\left(\e\delta_D(x)^{-1-\alpha/2}\I_{D_{4\e}}(x)+\e^{-\alpha/2}\I_{D\backslash D_{4\e}}(x)\right)\\
&\le c_{21}\left(
\e
\delta_D(x)^{-1-\alpha/2}\I_{D_{4\e}}(x)+\e^{-\alpha/2}\I_{D\backslash D_{4\e}}(x)\right),
\end{align*}
where the last inequality we have used \eqref{l2-5-4} and the fact that
\begin{align*}
&\int_{\{|z|>2\e\}}\delta_D(x+z)^{-1-\alpha/2}\I_{\{x+z\in D_{3\e}\}}\frac{1}{|z|^{d+\alpha}}\,dz\\
&\le c_{22}\left(\e^{-\alpha}\delta_D(x)^{-1-\alpha/2}+\e^{-\alpha/2}\delta_D(x)^{-1-\alpha}\right)\I_{D_{4\e}}(x)+ c_{22}
\e^{-{3\alpha}/{2}}\I_{D\backslash D_{4\e}}(x)
\end{align*}
that can be verified by the same way as that for \eqref{l2-5-3a}.

Using the same procedure
(of $I_4^\e(x)$ and $J_4^\e(x)$)
to estimate $L_4^\e(x)$ and putting all the estimates above together, as well as taking into account that $\beta>\alpha/2$, yield that
\begin{align*}
\int_D \left(|L_4^\e(x)|+|L_5^\e(x)|\right)\,dx\le
c_{23}\e^{1-\alpha/2}\le c_{23}\e^{\alpha/2}.
\end{align*}

Combining
all the estimates for $L_i^\e(x)$, $i=1,\cdots, 5$, above, we have
\begin{equation}
\label{e:3.13}
\LL_\e v_\e(x)=\eta_\e(x)\bar \LL \bar u_\e (x)+(1-\eta_\e(x))\bar \LL_{\e}\bar u_\e(x) +N_5^\e(x),
\end{equation}
where
\begin{align*}
\int_D |N_5^\e(x)|\,dx\le c_{24}\e^{\alpha/2}.
\end{align*}
According to this, \eqref{e1-3} and
\eqref{t2-1-10}
yield that
\begin{align*}
\LL_\e\left(u_\e-v_\e\right)(x)=h(x)(1-\eta_\e(x)^2)-(1-\eta_\e(x))\bar \LL_{\e}\bar u_\e(x)+N_6^\e(x),\quad  x\in D,
\end{align*}
where
\begin{equation}\label{t2-1-12}
\begin{split}
&\int_D\left(|h(x)(1-\eta_\e(x)^2)|+|(1-\eta_\e(x))\bar \LL_{\e}\bar u_\e(x)|+
|N_6^\e(x)|\right)dx\le c_{25}\e^{\alpha/2}.
\end{split}
\end{equation}
Here we have also used \eqref{t2-1-7a} (by the proof which still holds for $\alpha\in (0,1]$).

As explained in the proof
of Case 1,
according to \eqref{t2-1-12}, we can apply \eqref{l2-1-1} to obtain
\begin{align*}
\int_D |u_\e(x)-v_\e(x)|\,dx\le c_{26}\e^{\alpha/2}.
\end{align*}

On the other hand, combining \eqref{l2-2-3} with \eqref{t2-1-10}
gives us
\begin{align*}
  \int_{D}|v_\e(x)-\bar u(x)|\,dx&\le \int_D |\bar u(x)||1-\eta_\e(x)|\,dx+
\e\|\phi_\e\|_\infty \int_D |\nabla \bar u_\e(x)|\,dx+
\e^\alpha \|\psi\|_\infty \bar K^{-1}\int_D |\bar \LL \bar u_\e(x)|\,dx\\
&\le c_{27}\e^{\alpha/2},
\end{align*}
and so
\begin{align*}
\int_{D}|u_\e(x)-\bar u(x)|\,dx\le \int_D |u_\e(x)-v_\e(x)|\,dx+\int_{D}|v_\e(x)-\bar u(x)|\,dx\le c_{28}\e^{\alpha/2}.
\end{align*}
Therefore, we have finished the proof for $\alpha\in (0,1)$.
\end{proof}

Next, we will present the

\begin{proof}[Proof of Theorem $\ref{t2-2}$] The proof is almost the same as (but simpler than) that of
Theorem \ref{t2-1}, so we only give some crucial different points here.
Throughout the proof, all the constants $c_i$ are independent of $\e$.
First,
due to the fact $\bar u \in C_c^2(D)$,
there is a constant $c_1>0$ so that for all $x\in D$ and $k=0,1,2$,
\begin{align}\label{t2-2-2}
|\nabla^k \bar u(x)|\le c_1.
\end{align}
With this at hand
(in particular we do not need to consider blow up behaviors of $|\nabla^k \bar u|$ near $\partial D$), it is easy to verify that there is a constant $c_2>0$ so that for all $x\in D$,
\begin{equation}\label{t2-2-3}
\begin{split}
&|\hat \LL_{1,\e} \bar u(x)|+|\hat \LL_{1,\e} \nabla \bar u(x)|\le c_2  \quad \hbox{ if }  \alpha\in (1,2),\\
&|\hat \LL_{2,\e} \bar u(x) |+|\hat \LL_{2,\e} \nabla \bar u(x)|\le c_2 \quad \hbox{ if }  \alpha\in (0,1],
\end{split}
\end{equation} where $\hat \LL_{1,\e}$ and $\hat \LL_{2,\e}$ are defined by \eqref{l2-2-2}.

Using \eqref{t2-2-2} in place of \eqref{l2-2-3}, and following  the proofs of Lemmas \ref{l2-2} and \ref{l2-3} (which are in fact simpler since
we do not need to consider the blow up behaviours of
$\nabla^k \bar u$
near the boundary), we can prove that there is a constant $c_3>0$ so that for all $x\in D$,
\begin{equation}\label{t2-2-4}
\begin{split}
 |\hat \LL_{1,\e} \bar u(x)-\bar \LL_{\e} \bar u(x)|
&\le c_3\e^{2-\alpha}\quad \hbox{ if } \alpha\in (1,2)
\end{split}
\end{equation} and
\begin{equation}\label{t2-2-5}
 |\hat \LL_{2,\e} \bar u(x)-\bar \LL_{\e} \bar u(x)|
\le
\begin{cases}
c_3\e(1+|\log \e|)\ &\quad \hbox{ if } \alpha=1,\\
c_3\e^{\alpha}\ & \quad \hbox{ if } \alpha\in (0,1),\\
\end{cases}
\end{equation} where $\bar \LL_{\e}$ is given by \eqref{l2-2-2a}.
By assumptions,  $\bar \LL \bar u(x)=h(x)$ for every $x\in D$, and $\bar u\in C_c^2(D)$. Set
$\bar u(x)=0$ for every $x\in D^c$, and then extend $h$ to $\R^d$ by setting
$h(x):=\bar \LL \bar u(x)$ for all $x\in \R^d$. Then, it is not difficult to verify that
$h\in C_b^1(\R^d)$, and there is a constant $c_4>0$ such that for all $x\in \R^d$,
\begin{align}\label{t2-2-8}
|h(x)|+|\nabla h(x)|\le c_4(1+|x|)^{-d-\alpha}.
\end{align}
Then, as in the proof of Theorem \ref{t2-1} (here we also use the fact that $\bar \LL \bar u(x)=h(x)$), define for any $x\in D$,
\begin{align*}
v_\e(x):=
\begin{cases}
\bar u(x)+\e\left\langle \phi\left(x/\e\right),\nabla\bar u(x)\right\rangle
+\left(\e^\alpha\bar K^{-1}\psi\left(x/\e\right)h(x)-w_\e(x)\right)& \quad \hbox{ if } \alpha\in (1,2),\\
\bar u(x)+\e\left\langle \phi_\e\left(x/\e\right),\nabla\bar u(x)\right\rangle
+ \left(\e^\alpha\bar K^{-1}\psi\left(x/\e\right)h(x)-w_\e(x)\right)& \quad \hbox{ if } \alpha\in (0,1].
\end{cases}
\end{align*}
Here, $\phi$, $\phi_\e$ and $\psi$ are those given \eqref{t2-1-3}, and $w_\e: \R^d \to \R$
is the unique solution to the following Dirichlet exterior condition
\begin{equation}\label{t2-2-6}
\begin{cases}
\LL_\e w_\e(x)=0,  &\quad x\in D,\\
w_\e(x)=\e^\alpha\bar K^{-1}\psi\left(x/\e\right)h(x), &\quad x\in D^c.
\end{cases}
\end{equation}
The solution to the  Dirichlet problem above is given by
\begin{equation}\label{e:3.22}
w_\e(x)=\e^\alpha\bar K^{-1}\Ee_x\left[\psi\left(\e^{-1} X_{\tau_D^\e}^\e
\right)h\left(X_{\tau_D^\e}^\e\right)\right],\quad x\in D
\end{equation}
where $X^\eps:=\{X^\e_{t}\}_{t\ge 0}$ is the Hunt process associated with $\LL_\e$, and $\tau_D^\e$ is the first exit time from $D$ for the process $X^\eps$.
Indeed,  denote by $\tilde \w_\e$ the function given by the right hand side of \eqref{e:3.22}.
By  \cite{Ch}, $\tilde \w_\e$  is locally in
$W^{\alpha/2, 2}(D)$ and $\sL_\e$-harmonic in $D$.
Since $D$ is Lipschitz and so it satisfies an exterior cone condition, it follows from the heat kernel estimates in \cite{CK03} for the stable-like process
$X^\eps$ that
every boundary point is regular with respect to
the process $X^\eps$; see, e.g., \cite[Lemma 3.2]{CP} for details.   Consequently,
$\tilde \w_\e$ is continuous up to $\partial D$ and takes values $\e^\alpha\bar K^{-1}\psi\left(x/\e\right)h(x)$ on $D^c$.
 Thus, $\tilde \w_\e$ solves \eqref{t2-2-6}.
By the same argument as that for \cite[Theorem 3.8]{CP},
it can be shown that the weak solution to \eqref{t2-2-6} is unique.
Consequently $\tilde \w_\e=\w_\e$ and so \eqref{e:3.22} holds.
It follows from \eqref{e:3.22} that
there is a constant $c_5>0$ so that for all $\varepsilon\in (0,1)$,
\begin{align}\label{t2-2-6a}
\sup_{x\in D}|w_\e(x)|\le c_5\e^\alpha.
\end{align}

Now,
we split the proof into three cases.

{\bf Case 1:  $\alpha\in (1,2)$.}\,\, Using \eqref{t2-2-2}--\eqref{t2-2-4} and the fact $h\in C_b^1(\R^d)$, and repeating the proof of Theorem \ref{t2-1}, we
can prove that
$$
\LL_\e\left(\bar u(\cdot)+\e\left\langle \phi\left(\eps^{-1} \cdot \right),\nabla\bar u(\cdot)\right\rangle\right)(x)=\bar K^{-1}\bar K\left(x/\e\right)h(x)+I_1^\e(x)+\e\left\langle \phi\left(x/\e\right)
, \LL_\e\left(\nabla \bar u\right)(x)\right\rangle,
$$
\begin{align*}
\LL_\e \left(\e^\alpha\bar K^{-1}\psi\left(\eps^{-1} \cdot \right)h(\cdot)\right)(x)
=\left(1-\bar K^{-1}\bar K\left(x/\e\right)\right)h(x)+I_2^\e(x)+
\e^\alpha\bar K^{-1}\psi\left(x/\e\right)\LL_\e h(x),
\end{align*}
where
$$
\int_D ( |I_1^\e(x)|^2+|I_2^\e(x)|^2) \,dx \le c_6\e^{4-2\alpha}.
$$
Here we remark that the constant $c_6$ above only depends on $\|\nabla^k \bar u\|_\infty$ with $k=0,1,2$,  $\|h\|_\infty$ and $\|\nabla h\|_\infty$.
This, along with the equation \eqref{t2-1-3} and \eqref{t2-2-6}, yields that for every $g\in
{\rm Dom}  (\LL_\e^D)$
\begin{equation}\label{t2-2-7}
\begin{split}
\int_D\LL_\e v_\e(x)g(x)\,dx=&\int_D h(x)g(x)\,dx+\int_D (I_1^\e+I_2^\e(x))g(x)\,dx\\
&+\e\int_D \left\langle \LL_\e \left(\nabla \bar u\right)(x)\,dx,  \phi\left(x/\e\right)g(x)\right\rangle\, dx+\e^\alpha\bar K^{-1}\int_D \LL_\e h(x) \psi\left(x/\e\right)g(x)\,dx.
\end{split}
\end{equation}
Then, by the symmetry of $\LL_\e$, it holds that
\begin{align*}\label{t2-2-7a}
&\left|\e\int_D \left\langle \LL_\e \left(\nabla \bar u\right)(x),  \phi\left(x/\e\right)g(x)\right\rangle dx\right|\\
&=
\frac{\e}{2}\left|\int_{\R^d}\int_{\R^d}\left\langle \nabla \bar u(x+z)-\nabla \bar u(x), \phi\left((x+z)/{\e}\right)g(x+z)-
\phi\left(x/\e\right)g(x)\right\rangle\frac{K\left(x/\e,(x+z)/{\e}\right)}{|z|^{d+\alpha}}\,dz\,dx\right|\\
&=\frac{\e^{1-\alpha}}{2}\left|\int_{\R^d}\int_{\R^d}\left\langle \nabla \bar u(x+\e z)-\nabla \bar u(x), \phi\left(x/\e+z\right)g(x+\e z)-
\phi\left(x/\e\right)g(x)\right\rangle\frac{K\left(x/\e,x/\e+z\right)}{|z|^{d+\alpha}}\,dz\,dx\right|\\
&\le c_7\e^{2-\alpha}\Bigg(\int_D \bigg(\int_{\R^d}\|\nabla^2 \bar u\|_\infty |z|\cdot
\left(\|\nabla \phi\|_\infty |z|\I_{\{|z|\le 1\}}+\|\phi\|_\infty\I_{\{|z|>1\}}\right)\frac{1}{|z|^{d+\alpha}}\,dz\bigg)\cdot |g(x)|\,dx\Bigg)\\
&\quad+c_7\e^{1-\alpha}\Bigg(\int_{\R^d} \|\phi\|_\infty\cdot \bigg(\int_{\R^d}\left|\nabla \bar u(x+\e z)-\nabla \bar u(x)\right|
\left|g(x+\e z)-g(x)\right|\frac{1}{|z|^{d+\alpha}}\,dz\bigg)\,dx\Bigg)\\
&\le c_8\e^{2-\alpha}\int_D|g(x)|\, dx+ c_8\e \int_{\R^d}\int_{\R^d}
\left|\nabla \bar u(x+z)-\nabla \bar u(x)\right|
\left|g(x+z)-g(x)\right|\frac{1}{|z|^{d+\alpha}}\,dz\,dx\\
&\le \frac{\sE^\e(g,g)}{8}+c_9\left(\e^{4-2\alpha}+\e^2\int_{\R^d}\int_{\R^d}
\left|\nabla \bar u(x+ z)-\nabla \bar u(x)\right|^2\frac{1}{|z|^{d+\alpha}}\,dz\,dx\right)\\
&\le
\frac{\sE^\e(g,g)}{8}+c_{10}\e^{4-2\alpha},
\end{align*}
where in the third
inequality we used \eqref{l2-9-1}, the Young inequality and
$$\sE^\e(g,g):=\frac{1}{2}\iint_{\R^d\times\R^d}\left(g(x)-g(y)\right)^2\frac{K\left(x/\e,y/\e\right)}{|x-y|^{d+\alpha}}\,dx\,dy.$$

Using \eqref{t2-2-8} and following the arguments above, we can obtain
\begin{align*}
& \left|\e^\alpha\int_D \LL_\e h(x)\psi\left(x/\e\right)g(x)\,dx\right|
\le \frac{\sE^\e(g,g)}{8}+c_{11}\e^{4-2\alpha}.
\end{align*}

Putting
all the estimates above together and taking $g=u_\e-v_\e$, we have
\begin{align*}
 \sE^\e(u_\e-v_\e,u_\e-v_\e)&=-\int_D \LL_\e(u_\e-v_\e)(x)\cdot (u_\e(x)-v_\e(x))\,dx\\
&\le \frac{\sE^\e(u_\e-v_\e,u_\e-v_\e)}{4}+\int_D |u_\e(x)-v_\e(x)|(|I_1^\e(x)|+|I_2^\e(x)|)\,dx+c_{12}\e^{4-2\alpha}\\
&\le \frac{\sE^\e(u_\e-v_\e,u_\e-v_\e)}{2}+c_{13}\e^{4-2\alpha},
\end{align*}
where in the last inequality we
also used \eqref{l2-9-1} and the Young inequality.

This along with \eqref{l2-9-1} again (also by noting that $v_\e\in
W_0^{\alpha/2,2}(D)$ due to the definition of $w_\e$ on $D^c$) gives us that
\begin{align*}
\int_D|u_\e(x)-v_\e(x)|^2\,dx\le c_{14}\sE^\e(u_\e-v_\e,u_\e-v_\e)\le c_{15}\e^{4-2\alpha}.
\end{align*}

On the other hand, according to \eqref{t2-2-6a} and the definition of $v_\e$, it holds that
\begin{align*}
\int_D|\bar u(x)-v_\e(x)|^2dx\le c_{16}\e^2.
\end{align*}
Therefore, we
get
\begin{align*}
\int_D|u_\e(x)-\bar u(x)|^2\,dx\le 2\left(\int_D|u_\e(x)-v_\e(x)|^2\,dx+\int_D|\bar u(x)-v_\e(x)|^2\,dx\right)
\le c_{17}\e^{4-2\alpha}.
\end{align*}

{\bf Case 2: $\alpha=1$.}\,\, Repeating the arguments for the case $\alpha\in (1,2)$ and using \eqref{t2-1-4} to replace
$\|\nabla \phi\|_\infty$, we can show that
\begin{align*}
\int_D|u_\e(x)-\bar u(x)|^2\,dx\le c_{18}\e^2(1+|\log \e|)^4.
\end{align*} The details are omitted here.

{\bf Case 3: $\alpha\in (0,1)$.}\,\, Using  \eqref{t2-2-2}--\eqref{t2-2-5} and applying the arguments for the
case $\alpha\in (1,2)$ (here we also used the approach in the  proof of Theorem \ref{t2-1}),  we
can prove that for every $g\in
{\rm Dom}  (\LL_\e^D)$,
\begin{align*}
\int_D\LL_\e v_\e(x)g(x)\,dx=&\int_D h(x)g(x)\,dx+\int_D (J_1^\e+J_2^\e(x))g(x)\,ds+\e\int_D \left\langle \LL_\e \left(\nabla \bar u\right)(x)\,dx,  \phi_\e\left(x/\e\right)g(x)\right\rangle \,dx\\
&+\e^\alpha\bar K^{-1}\int_D \LL_\e h(x) \psi\left(x/\e\right)g(x)\,dx,
\end{align*}
where
$$
\int_D (|J_1^\e(x)|^2+|J_2^\e(x)|^2) \,dx \le c_{19}\e^{2\alpha}.
$$
Then,
\begin{align*}
&\left|\e\int_D \left\langle \LL_\e \left(\nabla \bar u\right)(x),  \phi_\e\left(x/\e\right)g(x)\right\rangle \,dx\right|\\
&=
\frac{\e}{2}\left|\int_{\R^d}\int_{\R^d}\left\langle \nabla \bar u(x+z)-\nabla \bar u(x), \phi_\e\left((x+z)/{\e}\right)g(x+z)-
\phi_\e\left(x/\e\right)g(x)\right\rangle\frac{K\left(x/\e,(x+z)/{\e}\right)}{|z|^{d+\alpha}}\,dz\,dx\right|\\
&\le c_{20}\e\Bigg(\int_D \bigg(\int_{\R^d} \|\phi_\e\|_\infty\cdot
\left(\|\nabla \bar u\|_\infty  |z|\I_{\{|z|\le 1\}}+\|\bar u\|_\infty\I_{\{|z|>1\}}\right)\frac{1}{|z|^{d+\alpha}}\,dz\bigg)\cdot |g(x)|\,dx\Bigg)\\
&\quad +c_{20}\e\Bigg(\int_{\R^d} \|\phi_\e\|_\infty\cdot \bigg(\int_{\R^d}\left|\nabla \bar u(x+z)-\nabla \bar u(x)\right|
\left|g(x+z)-g(x)\right|\frac{1}{|z|^{d+\alpha}}\,dz\bigg)\,dx\Bigg)\\
&\le c_{21}\e^{\alpha}\int_D|g(x)|\, dx+ c_{21}\e^\alpha \int_{\R^d}\int_{\R^d}
\left|\nabla \bar u(x+z)-\nabla \bar u(x)\right|
\left|g(x+z)-g(x)\right|\frac{1}{|z|^{d+\alpha}}\,dz\,dx\\
&\le \frac{\sE^\e(g,g)}{8}+c_{22}\left(\e^{2\alpha}+\e^{2\alpha}\int_{\R^d}\int_{\R^d}
\left|\nabla \bar u(x+ z)-\nabla \bar u(x)\right|^2\frac{1}{|z|^{d+\alpha}}\,dz\,dx\right)\\
&\le
\frac{\sE^\e(g,g)}{8}+c_{23}\e^{2\alpha}.
\end{align*}

Similarly, we can show that
\begin{align*}
&\left|\e^\alpha\int_D \LL_\e h(x)\psi\left(x/\e\right)g(x)\,dx\right|
\le \frac{\sE^\e(g,g)}{8}+c_{24}\e^{2\alpha}.
\end{align*}
Putting all the estimates above together and taking $g=u_\e-v_\e$, we have
\begin{align*}
\sE^\e(u_\e-v_\e,u_\e-v_\e)&=-\int_D \LL_\e(u_\e-v_\e)(x)\cdot (u_\e(x)-v_\e(x))\,dx\\
&\le \frac{\sE^\e(u_\e-v_\e,u_\e-v_\e)}{4}+\int_D |u_\e(x)-v_\e(x)|(|J_1^\e(x)|+|J_2^\e(x)|)\,dx+c_{25}\e^{2\alpha}\\
&\le \frac{\sE^\e(u_\e-v_\e,u_\e-v_\e)}{2}+c_{26}\e^{2\alpha},
\end{align*}
where we used  \eqref{l2-9-1} and the Young inequality in the last inequality.
This, along with \eqref{l2-9-1} again, yields that
\begin{align*}
\int_D|u_\e(x)-v_\e(x)|^2\,dx\le c_{27}\sE^\e(u_\e-v_\e,u_\e-v_\e)\le c_{28}\e^{2\alpha}.
\end{align*}
Hence, it holds that
\begin{align*}
\int_D|u_\e(x)-\bar u(x)|^2\,dx\le 2\left(\int_D|u_\e(x)-v_\e(x)|^2\,dx+\int_D|\bar u(x)-v_\e(x)|^2\,dx\right)
\le c_{29}\e^{2\alpha}.
\end{align*}
Now we have finished the proof.
\end{proof}

\bigskip

\noindent {\bf Acknowledgements.}\,\,
The research of Xin Chen is supported by the National Natural Science Foundation
of China (No. 12122111).
The research of Zhen-Qing Chen is partially supported by Simons Foundation grant 520542.
 The research of Takashi Kumagai is supported
by JSPS KAKENHI Grant Number JP22H00099 and 23KK0050.
The research of Jian Wang is supported by the National Key R\&D Program of China (2022YFA1006003) and  the National Natural Science Foundation of China (Nos.\ 12071076 and 12225104).

\vskip 0.3truein
{\small
{\bf Xin Chen:}
   School of Mathematical Sciences, Shanghai Jiao Tong University, 200240 Shanghai, P.R. China. \newline \texttt{chenxin217@sjtu.edu.cn}
	
\bigskip
	
{\bf Zhen-Qing Chen:}
   Department of Mathematics, University of Washington, Seattle,
WA 98195, USA. \newline \texttt{zqchen@uw.edu}

\bigskip

{\bf Takashi Kumagai:}
 Department of Mathematics,
Waseda University, Shinjuku, Tokyo, 169-8555, Japan.
\newline \texttt{t-kumagai@waseda.jp}

\bigskip

{\bf Jian Wang:}
    School of Mathematics and Statistics \& Key Laboratory of Analytical Mathematics and Applications (Ministry of Education) \& Fujian Provincial Key Laboratory
of Statistics and Artificial Intelligence, Fujian Normal University, 350007 Fuzhou, P.R. China.
     \newline \texttt{jianwang@fjnu.edu.cn}

     }

\end{document}